\numberwithin{equation}{section}
\newtheorem{theorem}{Theorem}[section]
\newtheorem{lemma}[theorem]{Lemma}
\newtheorem{corollary}[theorem]{Corollary}
\newtheorem{proposition}[theorem]{Proposition}
\theoremstyle{definition}
\newtheorem{definition}[theorem]{Definition}
\newtheorem{assumption}[theorem]{Assumption}
\theoremstyle{remark}
\newtheorem{remark}[theorem]{Remark}
\newcommand{\R}{\mathbb{R}}
\newcommand{\Z}{\mathbb{Z}}
\newcommand{\N}{\mathbb{N}}
\newcommand{\E}{\mathbb{E}}
\newcommand{\A}{\mathcal{A}}
\newcommand{\B}{\mathcal{B}}
\newcommand{\M}{\mathcal{M}}
\newcommand{\Pro}{\mathbb{P}}
\renewcommand{\tilde}{\widetilde}
\renewcommand{\epsilon}{\varepsilon}
\newcommand{\loc}{\mathrm{loc}}
\newcommand{\dr}{\mathrm{d}}
\newcommand{\ds}{\mathrm{d}s\, }
\newcommand{\dt}{\mathrm{d}t\, }
\renewcommand{\hat}{\widehat}
\newcommand{\e}{\varepsilon}
\newcommand{\osc}{{\rm osc}}
\newcommand{\ext}{{\rm ext}}
\renewcommand{\hom}{{\rm hom}}
\newcommand{\bbb}{\color{black}}
\newcommand{\igor}{\color{black}}
\renewcommand*{\backrefalt}[4]{%
\ifcase #1 %
No citations%
\or
\ding{43}~p.~#2%
\else
\ding{43}~pp.~#2%
\fi}
\begin{document}

\title{Quantitative estimates for high-contrast random media}
\author{
Peter Bella\,\orcidlink{0000-0002-1660-1711}\thanks{TU Dortmund, 
Fakult\"at f\"ur Mathematik, Vogelpothsweg 87, 44227 Dortmund, Germany;
\text{peter.bella@udo.edu},
\url{https://www.mathematik.tu-dortmund.de/sites/bella}.
}
\and
Matteo Capoferri\,\orcidlink{0000-0001-6226-1407}\thanks{Dipartimento di Matematica ``Federigo Enriques'', Università degli Studi di Milano, Via C.~Saldini 50, 20133 Milano, Italy \emph{and} Maxwell Institute for Mathematical Sciences, Edinburgh \&
Department of Mathematics,
Heriot-Watt University,
Edinburgh EH14 4AS,
UK;
\text{matteo.capoferri@unimi.it},
\url{https://mcapoferri.com}.
}
\and
Mikhail Cherdantsev\,\orcidlink{0000-0002-5175-5767}\thanks{
School of Mathematics,
Cardiff University,
Senghennydd Road,
Cardiff CF24~4AG,
UK;
\text{CherdantsevM@cardiff.ac.uk}.
}
\and
Igor Vel\v{c}i\'c\,\orcidlink{0000-0003-2494-2230}\thanks{
Faculty of Electrical Engineering and Computing, 
University of Zagreb, Unska 3, 
10000 Zagreb, Croatia;
\text{igor.velcic@fer.hr}.
}}


\date{2 June 2025}

\maketitle

\vspace{-.5cm}

\begin{abstract}
This paper studies quantitative homogenization of elliptic equations with random, uniformly elliptic coefficients that vanish in a union of random holes. Assuming an upper bound on the size of the holes and a separation condition between them, we derive optimal bounds for the regularity radius $r_*$
and suboptimal growth estimates for the corrector. These results are key ingredients for error analysis in stochastic homogenization and serve as crucial input for recent developments in the double-porosity model, such as those by Bonhomme, Duerinckx, and Gloria (\url{https://arxiv.org/abs/2502.02847}). 

\

{\bf Keywords:} high-contrast media, random media, stochastic homogenisation, quantitative estimates, corrector, convergence rates.

\

{\bf 2020 MSC classes: }
primary 
35J70, 
60H25; 
secondary 
35B27,  
35B40,  
74A40, 
74Q05. 
\end{abstract}

\tableofcontents

    \newpage

\allowdisplaybreaks

\section{Introduction}

Many materials found in nature exhibit material parameters that vary on very small scales, which presents significant challenges when it comes to simulating their behaviour. This difficulty arises from the pronounced separation of scales between the material heterogeneity and the overall size of the specimen being studied. The concept of homogenization offers a potential solution to this problem by transforming what might be perceived as a disadvantage into an advantage. By approximating highly oscillatory problems with simpler homogeneous models, one can facilitate more efficient treatment through various methods, such as numerical simulations.

In particular, while the mathematical theory of homogenization is considerably more straightforward for cases involving periodic microstructures, most real-world materials do not conform to this idealised structure. Instead, they often possess random characteristics or, at best, their structural properties are understood only in statistical terms. As a result, there is a growing need for an appropriate framework that addresses these complexities; hence, the corresponding theory of stochastic homogenization becomes essential. This theory aims to provide insights and methodologies for effectively analysing materials whose properties are inherently random or statistically defined, thereby enhancing our understanding and predictive capabilities regarding their behaviour under various conditions.

\smallskip

One of the most fundamental and accessible problems in the study of partial differential equations (PDEs) is the Poisson equation and its heterogeneous counterpart, expressed in the form of uniformly elliptic equations. A common formulation for such homogenization problem is given by  
\begin{equation}\label{eqint_1}
    -\nabla \cdot (a(x/\varepsilon) \nabla u_\varepsilon) = f\, ,
\end{equation}  
where $\varepsilon > 0$ represents a small parameter characterising the scale of heterogeneity. The matrix-valued function $a(x/\varepsilon)$ encodes fine-scale material properties, such as electrical or thermal conductivity, which may vary rapidly at a microscopic level. 

The study of such equations is motivated by numerous applications in physics, engineering, and materials science where multiscale phenomena naturally arise. As $\varepsilon \searrow 0$, corresponding to an increasingly finer microscopic structure, it has been rigorously shown that the solutions $(u_\varepsilon)$ to \eqref{eqint_1} converge to the solution $u_{\mathrm{hom}}$ of a homogenised problem:  
\begin{equation}
    -\nabla \cdot (a_{\mathrm{hom}} \nabla u) = f\, .
\end{equation}  
Here $a_{\mathrm{hom}}$ is a constant matrix encoding effective or ``averaged'' material properties at the macroscopic scale. These results hold under various assumptions on $a$, such as periodicity or stochastic ergodicity, different types of right-hand sides $f$, and in different functional norms. 

Although these \emph{qualitative} results --- asserting that $u_\varepsilon \to u_{\mathrm{hom}}$ as $\varepsilon \to 0^+$ --- are theoretically important, they are insufficient for practical purposes. In real-world scenarios, $\varepsilon$ is small but finite; thus, \emph{quantitative} estimates become essential. Specifically, one seeks precise error bounds on $\|u_\varepsilon - u_{\mathrm{hom}}\|$ that depend explicitly on $\varepsilon$. Such estimates provide valuable insights into how well the homogenized solution approximates its heterogeneous counterpart for given values of $\varepsilon$. Moreover, obtaining these bounds requires careful mathematical analysis that often depends on additional structural properties of $a(x)$.

An equally crucial aspect involves efficiently computing the homogenized coefficients $a_{\mathrm{hom}}$. While theoretical frameworks like periodic homogenization or stochastic homogenization provide formal definitions for $a_{\mathrm{hom}}$, their numerical computation can be challenging due to high-dimensional integrals or complex probabilistic structures. Developing robust algorithms to approximate $a_{\mathrm{hom}}$ accurately remains an active area of research with significant implications for computational multiscale modeling.


\smallskip


To understand the behaviour of heterogeneous media as $\varepsilon \searrow 0$, a common approach in stochastic homogenization is to instead fix $\varepsilon = 1$ and study the problem on increasingly larger scales. This perspective avoids directly shrinking $\varepsilon$ and instead focuses on how solutions behave over growing domains --- an approach we will also pursue here.

A central object in the quantitative theory of stochastic homogenization is the \emph{corrector} $\phi$, which serves as a correction to linear functions so that they become $a$-harmonic. Mathematically, the corrector is defined via 
the equation
\begin{equation}\label{eqint_corr}
    -\nabla \cdot (a \nabla (x_i + \phi_i)) = 0,
\end{equation}  
where $x_i$ represents a coordinate direction. The corrector encodes key information about the microscopic oscillations of the coefficient field $a(x)$ and plays a critical role in determining both homogenization errors and other properties of solutions, such as regularity. 

One of its most important features is its growth --- or, rather, its lack thereof --- which reflects sublinear behavior at large scales. This sublinearity is pivotal for understanding quantitative error estimates in homogenization as well as for establishing regularity results for solutions to heterogeneous PDEs. In particular, quantifying this sublinearity forms the cornerstone of modern quantitative theories of stochastic homogenization (see, e.g., works by Gloria and Otto~\cite{gloria2012quantitative}). 

More precisely, one studies the size (or decay) of $\phi/R$ over balls of growing radii $R$. A common approach to measure the latter involves bounding quantities like  
$\inf_c \frac{1}{R^2} \fint_{B_R} |\phi - c|^2$, 
which captures normalised deviations from constants within a ball $B_R$. The smallness of this quantity has profound implications: for example, it connects directly to large-scale regularity properties such as Schauder estimates or simpler but practically useful Lipschitz estimates. For instance, Lipschitz bounds take the form  
$\fint_{B_r} |\nabla u|^2 \lesssim C \fint_{B_R} |\nabla u|^2$, 
for all large enough radii $r \leq R$, where $u$ is an $a$-harmonic function. Such estimates are fundamental tools in controlling oscillatory behaviors at different scales.

Interestingly, these Lipschitz-type bounds can also be leveraged through sensitivity analysis to gain control over the growth behavior of the corrector itself. By carefully analysing how changes propagate across scales, one can link these bounds back to sublinearity properties of $\phi$. Buckling is then used to make this ``circular argument'' between corrector growth and regularity of solutions mathematically rigorous. 

\smallskip

The double porosity problem concerns elliptic equations with coefficients that are of order $\e^2$ in soft inclusions. These inclusions have size $\e$ and are spaced at a distance $\e$, leading to resonances between the microscopic structure and the governing partial differential equation. Such resonances give rise to intriguing properties of materials modeled by these equations, as the interplay between the geometry and the small-scale coefficients significantly influences their macroscopic behavior, most notably their spectral properties, see e.g. \cite{CCV1}, \cite{CCV2} and \cite{systems} for some recent results in this area.

The effective behaviour of such problem can be described using standard homogenization techniques. Specifically, it involves constructing a corrector for a degenerate elliptic problem where the coefficient vanishes inside the perforations (soft inclusions). This degeneracy reflects the physical reality of highly permeable or weakly resistive regions within the material. These PDEs can be equivalently formulated as uniformly elliptic equations in perforated domain with zero Neumann boundary conditions on the perforations. The resulting homogenised model captures both the influence of these soft inclusions and their interactions with the surrounding medium, providing insights into macroscopic transport properties such as diffusion or conductivity.

This framework is particularly useful for studying materials with hierarchical porous structures or composite media where soft inclusions play a dominant role in determining effective behaviour.

\smallskip

The study of homogenization in perforated domains has a long and rich history, primarily focusing on the case with zero Dirichlet boundary conditions imposed on the holes. This line of research was initiated by Tartar~\cite{tartar}, and further developed by Cioranescu and Murat, who introduced the oscillating test function method~\cite{CioranescuMurat}. Using this approach, they analysed the homogenization of the Poisson equation in domains perforated with periodically arranged critically sized holes. In the limit, they derived an effective equation featuring an additional ``strange'' Brinkman zero-th order term, which is defined through the capacity of the holes.

A similar result was obtained independently by Papanicolaou and Varadhan for randomly placed critically sized holes under the assumption that their distribution obeys a stationary and ergodic law~\cite{PapaVaradhan}. Their work demonstrated that stochastic methods can also be employed effectively to handle such problems in random settings.

Subsequent research revealed that these techniques are robust enough to address more complex equations in perforated domains. For instance, Allaire extended these ideas to study incompressible Navier-Stokes equations modeling fluid dynamics in perforated geometries~\cite{Allaire}. More recently, progress has been made on compressible fluids models (see, e.g., \cite{BellaFeireislOschmann} for periodic as well as~\cite{BellaOschmann} for random configurations of holes) or effective viscosity problem where the obstacles move with the flow~\cite{MitiaGloria23}.

\smallskip

Turning back to the elliptic equations, let us now briefly discuss the strategy to quantify the growth of the corrector, building on the theory of stochastic homogenization for uniformly elliptic equations developed over the past decade by Otto and coauthors. This framework provides powerful tools to analyse sublinear growth properties of correctors in random environments.

The (lack of) growth of the corrector stems from cancellations in its gradient. Thus, understanding how quickly the gradient decorrelates is key. Since the gradient of the corrector $\nabla \phi$ solves an equation with coefficients given by $a$, this requires quantifying the ergodicity of randomness in $a$. In this context, coefficient fields $a$ are treated as elements of a probability space $\Omega$. To achieve this quantification, functional inequalities come into play. These inequalities --- extensions of classical Poincar\'e and Sobolev inequalities to infinite-dimensional probability spaces --- are known as Spectral Gap and logarithmic Sobolev inequalities (log-Sobolev). 

Unlike standard Poincar\'e inequalities on $\mathbb{R}^d$, where gradients appear on the right-hand side (RHS), in Spectral Gap inequalities, these gradients are replaced by ``vertical'' derivatives. Vertical derivatives measure oscillations or sensitivity of random variables when values of $a$ (as elements in the probability space $\Omega$) are altered locally within different parts of the domain. One significant advantage of this approach is that it decouples analytical arguments (e.g., PDE estimates) from stochastic considerations (e.g., ergodicity). Another advantage is its generality: such functional inequalities can be independently verified for many stochastic models~\cite{DG1,DG2}, making them widely applicable.

To control oscillations appearing on the RHS of these functional inequalities, we need to understand how much $\nabla \phi$ changes if we modify $a$ locally within a ball $B_r(x)$. Denoting this modified coefficient field by $a'$, we study differences between $\nabla \phi$ and $\nabla \phi'$, where $\phi'$ is the corrector associated with $a'$. Observing that this difference satisfies an elliptic equation with a RHS supported in $B_r(x)$ allows us to efficiently apply energy estimates. Combined with large-scale Lipschitz bounds --- which propagate control over $ |\nabla u|^2 $ across scales --- this leads to bounds on oscillations. These bounds then feed into functional inequalities to control variances of large scale averages of $\nabla \phi$.

When dealing with equations involving random coefficients, it is important to note that large-scale Lipschitz estimates do not hold uniformly across all scales but instead depend on a random scale $ r_*(x) $. This scale $ r_*(x) $, which varies spatially due to randomness, marks the threshold where large-scale regularity begins to hold around a point $x$. Crucially, $r_*(x)$ is closely linked to the sublinearity properties of correctors over balls $B_r(x)$ for $r \geq r_*(x)$ --- an observation highlighted in~\cite{mjm}. This connection plays a pivotal role in bridging sublinearity results for correctors with quantitative homogenization theory. Specifically, understanding how $r_*(x)$ behaves provides insight into how solutions transition from microscopic oscillations to macroscopic regularity.

Another key idea involves the use of massive correctors, denoted by $\phi_T$, for $T \gg 1$. These are defined as whole-space solutions to $\frac{1}{T} \phi_T - \nabla \cdot a (\nabla \phi_T +\mathbf{e}) = 0$,
where $ \mathbf{e} \in \mathbb{R}^d $ represents a fixed direction. Massive correctors serve as an effective proxy for the standard corrector $ \phi $. On one hand, the massive term modifies the dependence of $ \phi_T $ on the coefficient field $ a(x) $, effectively localising its sensitivity to regions of size approximately $O(\sqrt{T})$. On the other hand, any power-law growth observed in $ |\phi_T| / R^\alpha$ with $T=R^2$ directly translates into analogous growth behavior for the original corrector $ |\phi| / R^\alpha$.

\smallskip

Before the work of Otto and coauthors, Armstrong and Smart~\cite{ArmstrongSmart} developed quantitative estimates using a completely different technique, building on the foundational work of Avellaneda and Lin~\cite{AvellanedaLin}. They also utilised the variational structure (initially for symmetric coefficient fields $ a $, later extended to the general case in~\cite{ArmstrongKuusiMourrat16}), analysing the averaged energy over an increasing sequence of triadic cubes. By combining this with a convex dual quantity, they established quantitative convergence rates to the homogenised limit. In all these works the regularity theory plays a crucial role, also enabling a precise understanding of the behavior of solutions~\cite{ArmstrongKuusiMourrat17}. For further details on this framework and its applications, we refer to the comprehensive book by Armstrong, Kuusi, and Mourrat~\cite{ArmstrongKuusiMourrat19}.

More recently, homogenization techniques have been applied in settings where classical homogenization does not occur. For example, consider Brownian motion in two spatial dimensions with drift along the level lines of a Gaussian Free Field (GFF). Physicists predicted that such motion would exhibit superdiffusive behavior with logarithmic corrections—a phenomenon supported by stochastic methods (see e.g.,~\cite{Balint}). However, these earlier works did not identify the precise rate of superdiffusion.

While the presence of superlinear diffusion indicates a failure of homogenization, Chatzigeorgiou et al.~\cite{Chatz++} truncated correlations of the field on scale $L$ and employed homogenization methods to analyse diffusion speed. By averaging over all random realisations of the field, they were able to let $L \to \infty$ to obtain the correct superlinear rate, including identifying the appropriate prefactor in the limit of vanishing drift. 
Using local quantities and homogenization techniques across multiple scales, combined with a careful analysis of interactions between these scales, Armstrong, Bou-Rabee, and Kuusi~\cite{ArmstrongBuuKuusi} achieved stronger results in several ways: their findings are quenched (i.e., valid for almost every realization of the Gaussian Free Field), identify the optimal constant without requiring weak drift, and are presented in the form of a scaling limit. 
These advancements represent significant progress in understanding anomalous diffusion phenomena in random environments. Furthermore, it is believed that these techniques can be adapted to handle perturbatively supercritical fields. In such cases, instead of logarithmic corrections leading to superlinear growth rates, one would expect power-law behaviors—an exciting direction for future research.


\smallskip



A homogenization setting closely related to the present situation is that of non-uniformly elliptic equations. This area has been motivated by studies of the invariance principle for random walks in degenerate random environments~\cite{BS1,BS2} or on quantitative homogenization in supercritical percolation clusters~\cite{ArmstrongDario, Dario}. Quantitative homogenization of degenerate equations was considered by the first author and Kniely~\cite{bella_kniely} (see also~\cite{ArmstrongKuusi, ClozeauGloriaQi}), building on earlier work concerning regularity theory in such degenerate setting~\cite{BFO}. These studies provide key tools to address challenges posed by degeneracy, where coefficients may be close to vanish or become unbounded.

\

Shortly before the release of this manuscript, an article by Bonhomme, Duerinckx, and Gloria~\cite{BonDueGlo} appeared online as a preprint.
At first glance, their work appears to tackle very similar issues; however, upon closer inspection, it becomes evident that while there is some overlap between their and the present work, their results are largely complementary to ours. Specifically, in~\cite{BonDueGlo}, the focus lies on establishing qualitative homogenization results as well as quantitative statements under \emph{the assumption} that correctors are stationary objects in $L^2(\Omega)$. 
As discussed earlier in this introduction, controlling correctors is a fundamental prerequisite for achieving quantitative homogenization results. One of the main contributions of our work lies precisely in providing such control within this framework. Thus, while both studies address related problems, they do so from different perspectives and with distinct focal points.

\subsection*{Structure of the paper}
\addcontentsline{toc}{subsection}{Structure of the paper}

Our paper is structured as follows.

In Section~\ref{Statement of the problem} we introduce our geometric and probabilistic setting, set out our assumption and formulate our mathematical model. This section also contains definition for most of the notation used throughout the paper.

Section~\ref{Main result} concisely states our main results.

The proof of our most important result, Theorem~\ref{theorem 2}, is postponed until the end of the next Section~\ref{Proof of main theorem}, which comprises a series of propositions building up to said proof.

The paper is complemented by an appendix, Appendix~\ref{Appendix hole filling}, providing the proof of a version of the Hole Filling Lemma for perforated domains.

Finally, for the reader's convenience, we summarise in the table below the notation most frequently used throughout the paper.

\subsection*{List of notation}
\addcontentsline{toc}{subsection}{List of notation}
\begin{longtable}{l l}
\hline
\\ [-1em]
\multicolumn{1}{c}{\textbf{Symbol}} & 
  \multicolumn{1}{c}{\textbf{Description}} \\ \\ [-1em]
 \hline \hline \\ [-1em]
 $(\,\cdot\,)_{*t}$ & Convolution with a Gaussian of variance $t$ \\ \\ [-1em]
$\partial^\osc_{Q}(\,\cdot\,)$ & Oscillation~\eqref{oscillation} \\ \\ [-1em]
$\square_L(x)$ & Hypercube of side $L$ centred at $x\in \R^d$ \\ \\ [-1em]
$a_\pm$ & Ellipticity constants --- see~\eqref{coefficients 2}\\ \\ [-1em]
$a_\omega$ & Matrix-valued random coefficient field --- see~\eqref{coefficients 1}, \eqref{coefficients 2}\\ \\ [-1em]
$|A|$ & Lebesgue measure of $A\subset \R^d$ \\ \\ [-1em]
$\overline{A}$ & Closure of $A\subset \R^d$  \\ \\ [-1em]
$B_r(x)$ (resp.~$B_r$) & Open ball of radius $r>0$ centred at $x\in \R^d$ (resp.~at the origin) \\ \\ [-1em]
$\tilde B_r$ & Lipschitz domain, $B_r \subset\tilde B_r \subset B_{r+1}$ --- see statement of Proposition~\ref{proposition 1} \\ \\ [-1em]
$\mathcal{B}_\omega^k$ & Extension domain of $\omega^k$ --- see~Remark~\ref{remark geometric assumptions}(b) \\ \\ [-1em]
$\beta$ & Constant in the Multiscale Spectral Gap --- see Assumption~\ref{multiscale spectral gap} \\ \\ [-1em]
$d$ & Spatial dimension, $d\ge 2$ \\ \\ [-1em]
$\mathcal{D}(\A)$ & Domain of the operator $\A$ \\ \\ [-1em]
$\E[\,\cdot\,]$ ($\E_L[\,\cdot\,]$)  & Expectation (expectation w.r.t.~ensemble scaled by $L$ and coefficients $a(L\,\cdot\,)$)\\ \\ [-1em]
$\mathrm{Exc}(\nabla u;B_r)$  & Excess~\eqref{definition Exc} \\ \\ [-1em]
$(\,\cdot\,)^\ext$  & Extension of $(\,\cdot\,)$ as per Theorem~\ref{theorem extension} \\ \\ [-1em]
$f(x,\omega):=\overline{f}(T_x\omega)$ & Stationary extension of the random variable $\overline{f}$ \\ \\ [-1em]
$g_T$ & Vector field, solution of~\eqref{equation definition of g_T} \\ \\ [-1em]
$I$ & Identity matrix \\ \\ [-1em]
$\M=\M_\omega$ & Matrix, i.e.~the complement of $\overline{\omega}$ \\ \\ [-1em]
$q$ & Flux~\eqref{definition q} \\ \\ [-1em]
$q_T$ & Massive flux~\eqref{definition qT} \\ \\ [-1em]
$r_*$ & Regularity radius~\eqref{definition r_*} \\ \\ [-1em]
$r_{**}$ & Massive regularity radius --- see~\eqref{proposition 2 equation 1} \\ \\ [-1em]
$\sigma=(\sigma_{ijk})$ & Flux corrector --- see Definition~\ref{definition flux corrector} \\ \\ [-1em]
$\sigma(\A)$ & Spectrum of the operator $\A$ \\ \\ [-1em]
$T_y$ & Dynamical system \eqref{Ty} on $\Omega$ acting by translation \\ \\ [-1em]
$\phi$ & Homogenisation corrector --- see~Definition~\ref{definition corrector} \\ \\ [-1em]
$\phi_T$ & Massive corrector --- see~Definition~\ref{definition massive corrector} \\ \\ [-1em]
$\pi(l)$ & Weight in the Multiscale Spectral Gap --- see~Assumption~\ref{multiscale spectral gap} \\ \\ [-1em]
$\mathrm{Var}[\,\cdot\,]$ ($\mathrm{Var}_L[\,\cdot\,]$)  & Variance (variance w.r.t.~ensemble scaled by $L$ and coefficients $a(L\,\cdot\,)$)\\ \\ [-1em]
$\omega$ & Collection of inclusions ($\omega\subset \R^d$, $\omega\in \Omega$) --- see~Assumption~\ref{assumption 1} \\ \\ [-1em]
$\omega^k$ & Individual inclusion, connected component of $\omega$ --- see~Assumption~\ref{assumption 1} \\ \\ [-1em]
$\omega_T(x)$ & Exponential weight~\eqref{exponential weight} \\ \\ [-1em]
$\chi_A$ & Characteristic function of the set $A$ \\ \\ [-1em]
$(\Omega, \mathcal{F}, \mathbb{P})$ & Probability space \\ \\ [-1em]
\hline
\end{longtable}

\section{Statement of the problem}
\label{Statement of the problem}

\subsection{Geometric and probabilistic setting}

Working in Euclidean space $\R^d$, $d\ge 2$, equipped with the Lebesgue measure, we denote by $|A|$ the Lebesgue measure of a measurable set $A\subset \R^d$, by $\overline{A}$ its closure, and by $\chi_A$ its characteristic function. Furthermore, we denote by $B_r(x)$ the (open) ball of radius $r>0$ centred at $x\in \mathbb{R}^d$, and by $\square_L(x):=x+[-\nicefrac{L}{2},\nicefrac{L}{2}]^d$ the (closed) cube centred at $x$ of sidelength $L>0$. We shall often drop the argument and simply write $B_r$ (resp.~$\square_L$) when the ball (resp.~cube) is centred at the origin $x=0$.

By $C(\mu_1, \dots, \mu_n)$ we denote a positive constant depending only on the parameters $\mu_1, \dots, \mu_n$. When we write $C$ with no arguments we imply that $C$ is a universal constant.

Given a function $f\in L^1(A)$, we denote by 
\[
\fint_A f\,\dr x:=\frac{1}{|A|}\int f(x)\,\dr x
\]
its mean over the measurable set $A$.

\

We define our probability space $(\Omega,\mathcal{F}, \mathbb{P})$ as follows. 

\

We define $\Omega$ to be the set of all admissible collections of random \emph{inclusions}, namely, disconnected subsets $\omega\subset\R^d$ satisfying the following geometric assumptions.

\begin{definition}[Minimally smooth set {\cite[Definition~2.1]{decay}\cite[Chapter~VI, Sect.~3.3]{stein}}]
\label{def:minimally_smooth}
An open set $A \subset \R^d$ is said to be {\it minimally smooth with constants $(N,C_1,C_2)$}
if there exists a countable (possibly finite) family of open sets $\{U_i\}_{i\in I}$ such that
	\begin{enumerate}
		\item[(a)]  each $x\in\R^d$ is contained in at most $N\in \N$ of the open sets $U_i$;
		\item[(b)] for every $x\in\partial A$ there exists $i\in I$ such that $B_{C_1}(x)\subset U_i$;
		\item[(c)] for every $i\in I$ the set $\partial A\cap U_i$ is,  in a suitably chosen coordinate system,  the graph of a Lipschitz function with Lipschitz seminorm not exceeding $C_2$.
	\end{enumerate}
\end{definition}

\begin{assumption}
\label{assumption 1}
The set $\mathbb{R}^d\setminus \omega$ is connected for all $\omega\in\Omega$. Furthermore, $\omega$ can be written as the disjoint union
\[
\omega=\coprod_{k\in \mathbb{N}}\omega^k
\]
of sets $\omega^k$ --- referred to as \emph{inclusions} --- such that\footnote{Here and further on, $\operatorname{diam}A:=\sup_{x,y\in A}|x-y|$ denotes the (Euclidean) diameter of $A$ whereas $\operatorname{dist}(A,B):=\inf\{|x-y|\,| \,x\in A, \ y\in B\}$
 denotes the Euclidean distance between sets.}:
\begin{enumerate}[(i)]
    \item for every $k\in\mathbb{N}$ the set $\omega^k$ is open and connected;
    \item $\operatorname{diam}\omega^k<\frac12$;
    \item there exists $\varrho>0$, uniform in $\omega$, such that
    \begin{equation}
    \min_{\substack{j\in\mathbb{N},\\ j\neq k}}\operatorname{dist}(\omega^k,\omega^j)>\varrho \operatorname{diam}\omega^k
    \end{equation}
     for all $k\in \mathbb{N}$;
    \item for all $k\in \mathbb{N}$ the sets $(\operatorname{diam} \omega^k)^{-1}\omega^k$ are minimally smooth as per Definition~\ref{def:minimally_smooth} with uniform constants independent of $\omega$.
\end{enumerate}
\end{assumption}

Our geometric assumptions warrant a few remarks.

\begin{remark}
\label{remark geometric assumptions}
\begin{enumerate}[(a)]
    \item In plain English, our assumptions mean that the inclusions are not allowed to be too big, nor to get too close to each other. Item (iii) tells us that how close they are allowed to get to each other depends on the size of the inclusions themselves: smaller inclusions can get closer to one another.
    \item It follows from the discussion after \cite[Assumption~2.6]{CCV2} that (i)--(iv) above are enough to ensure the following:
    one can construct a family $\{\mathcal{B}_\omega^k\}_{k\in\mathbb{N}}$ of mutually non-overlapping bounded open sets in $\R^d$ --- referred to as \emph{extension domains}, cf.~Theorem~\ref{theorem extension} --- such that for every $k\in\mathbb{N}$ we have
    \begin{itemize}
        \item $\omega \cap \mathcal{B}_\omega^k=\omega^k$ and
        \item $(\operatorname{diam} \omega^k)^{-1}(\mathcal{B}_\omega^k\setminus\omega^k)$ is minimally smooth with uniform constants independent of $\omega$ (possibly different from those in Assumption~\ref{assumption 1}(iv)).
    \end{itemize}
    This will play a crucial role in ensuring good extension properties of Sobolev functions defined $\R^d\setminus \omega$ in the sense of Theorem~\ref{theorem extension}.
    \item We should like to emphasise that very small inclusions do not bring about any issues in our analysis. What is troublesome are inclusions that become arbitrarily large. 
\end{enumerate}
\end{remark}

\

We define $\mathcal{F}$ to be the $\sigma$-algebra on $\Omega$ generated by the maps $\pi_q:\Omega \to \{0,1\}$, $q\in \mathbb{Q}^d$, defined in accordance with
\[
\pi_q(\omega):=\chi_\omega(q)\,.
\]

Namely, $\mathcal{F}$ is the smallest $\sigma$-algebra for which the maps $\{\pi_q\}_{q\in \mathbb{Q}}$ are measurable.
Since $\Omega$ is, by definition, invariant under translations, the family of maps $T_y:\Omega \to \Omega$ indexed by $y\in\mathbb{R}^d$ and given by
\begin{equation}
\label{Ty}
    \omega \mapsto T_y\omega=\{z-y\,|\, z\in \omega\}
\end{equation}
is well defined. Clearly, the latter induce a natural group action of $\R^d$ on $\Omega$, and hence on $\mathcal{F}$.
Furthermore, it is easily seen that
\begin{itemize}
    \item $T_{y_1}\circ T_{y_2}=T_{y_1+y_2}$ for all $y_1,y_2\in \mathbb{R}^d$ (here $\circ$ stands for composition) and
    \item the map $(y,\omega)\mapsto T_y\omega$ is measurable with respect to the standard $\sigma$-algebra on $\R^d\times \Omega$ induced by the Borel $\sigma$-algebra on $\R^d$ and $\mathcal{F}$.
\end{itemize}
Observe that $\mathcal{F}$ is \emph{countably} generated; this ensures that $L^p(\Omega)$ is separable for $1\le p<\infty$.

\

We equip $(\Omega,\mathcal{F})$ with a probability measure $\mathbb{P}$ satisfying the following. 

\begin{assumption}
\label{assumption 2}
\begin{enumerate}[(i)]
    \item $\mathbb{P}$ is invariant under the action of $\{T_y\}_{y\in \mathbb{R}^d}$:
\[
\mathbb{P}(T_yF)=\mathbb{P}(F) \quad \forall F\in \mathcal{F}, \qquad T_yF:=\bigcup_{\omega\in F}T_y\omega.
\]
    \item The action of $\{T_y\}_{y\in \mathbb{R}^d}$ on $(\Omega, \mathcal{F}, \mathbb{P})$ is ergodic, i.e., 
    \[
    \mathbb{P}((T_yF \cup F)\setminus(T_yF \cap F))=0 \quad \forall y\in \mathbb{R}^d \quad \Rightarrow \quad \mathbb{P}(F)\in\{0,1\}\,.
    \]
\end{enumerate}
\end{assumption}

Throughout the paper, we adopt the standard notation $\mathbb{E}[f]:=\int_\Omega f(\omega)\,\dr \mathbb{P}(\omega)$ for $f:\Omega \to \mathbb{R}$.

\

The geometric assumptions set out above build up to ensuring the following key property, on which we will rely in an essential fashion in the proof of our main results.

\begin{theorem}[Extension Theorem]
\label{theorem extension}
    Suppose Assumptions~\ref{assumption 1} and~\ref{assumption 2} hold. Then for every $p\ge 1$ there exists a linear bounded extension operator $E_k:W^{1,p}(\mathcal{B}_\omega^k\setminus \omega^k)\to W^{1,p}(\mathcal{B}_\omega^k)$ such that for every $v\in W^{1,p}(\mathcal{B}_\omega^k\setminus \omega^k)$ the extension $v^\ext:=E_k u$ satisfies 
    \begin{enumerate}[(i)]
        \item $(l+x_i)^\ext=l+x_i$ for every $l\in\mathbb{R}$, $x\in\R^d$\,,
        \item $\|v^\ext\|_{L^{p}(\mathcal{B}_\omega^k)}\le c\, \| v\|_{L^{p}\,(\mathcal{B}_\omega^k\setminus \omega^k)}$\,,
        \item $\|\nabla v^\ext\|_{L^p(\mathcal{B}_\omega^k)}\le c\, \|\nabla v\|_{L^p(\mathcal{B}_\omega^k\setminus \omega^k)}$\,,
    \end{enumerate}
    where the constant $c$ depends on $p$ and the parameters of minimal smoothness (entering Assumption~\ref{assumption 2}), but is independent of $\omega$ and $k$. 
\end{theorem}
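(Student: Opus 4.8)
Since the defining property (i) and the estimates (ii)--(iii) are invariant under dilations $x\mapsto\lambda x$, and the geometry has been normalised by $\operatorname{diam}\omega^k$ in Assumption~\ref{assumption 1}(iv), the plan is to rescale each inclusion to unit size, apply a classical uniform Sobolev extension theorem on a single rescaled perforated piece, and then rescale back. Concretely, writing $d_k:=\operatorname{diam}\omega^k\in(0,\tfrac12)$, I would set $\hat\omega^k:=d_k^{-1}\omega^k$, $\hat{\mathcal B}^k:=d_k^{-1}\mathcal B_\omega^k$ and $D_k:=\hat{\mathcal B}^k\setminus\hat\omega^k=d_k^{-1}(\mathcal B_\omega^k\setminus\omega^k)$. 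By Assumption~\ref{assumption 1}(iv) and the construction of the extension domains recalled in Remark~\ref{remark geometric assumptions}(b) (together with its behaviour under $x\mapsto d_k^{-1}x$), $D_k$ is a bounded open set of diameter comparable to $1$ whose boundary is minimally smooth with parameters uniform in $k$ and $\omega$. In particular, the $D_k$ lie in a fixed compact family of bounded Lipschitz domains, so they satisfy a Poincaré--Wirtinger inequality with a uniform constant and one has the volume comparison $|\hat{\mathcal B}^k|\le C\,|D_k|$, i.e.\ $|\mathcal B_\omega^k|\le C\,|\mathcal B_\omega^k\setminus\omega^k|$, with $C$ uniform.

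Next I would invoke Stein's extension theorem \cite[Ch.~VI]{stein} on $D_k$, which furnishes a linear operator $\mathfrak E_k\colon W^{m,p}(D_k)\to W^{m,p}(\R^d)$, bounded simultaneously for every integer $m\ge0$ with operator norm depending only on $p$, $d$ and the (uniform) minimal-smoothness parameters --- hence uniformly in $k,\omega$. To force (i), I would project out the constant before extending and add it back afterwards: for $v\in W^{1,p}(\mathcal B_\omega^k\setminus\omega^k)$, set $\hat v(y):=v(d_ky)$ and
\[
\hat E_k\hat v:=\mathfrak E_k\Bigl(\hat v-{\textstyle\fint_{D_k}}\hat v\Bigr),\qquad (E_kv)(x):=(\hat E_k\hat v)(d_k^{-1}x)+{\textstyle\fint_{D_k}}\hat v\quad(x\in\mathcal B_\omega^k),
\]
and $v^\ext:=E_kv$. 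Since $\mathfrak E_kw=w$ on $D_k$ we have $v^\ext=v$ on $\mathcal B_\omega^k\setminus\omega^k$, and $v\equiv l$ gives $l^\ext=l$: this is (i). As both sides of (ii) and (iii) scale by the same power of $d_k$ under $x\mapsto d_kx$, it remains to establish, with implicit constants uniform in $k,\omega$, the unit-scale bounds
\[
\|\hat E_k\hat v\|_{L^p(\R^d)}+\bigl|{\textstyle\fint_{D_k}}\hat v\bigr|\,|\hat{\mathcal B}^k|^{1/p}\lesssim\|\hat v\|_{L^p(D_k)},\qquad \|\nabla\hat E_k\hat v\|_{L^p(\R^d)}\lesssim\|\nabla\hat v\|_{L^p(D_k)}.
\]
The first follows from the $L^p\to L^p$ boundedness of $\mathfrak E_k$, Hölder's inequality and the volume comparison; the second from the $W^{1,p}$ boundedness of $\mathfrak E_k$ combined with the uniform Poincaré--Wirtinger inequality on $D_k$, which absorbs the $L^p$ part of $\|\hat v-\fint_{D_k}\hat v\|_{W^{1,p}(D_k)}$ into $\|\nabla\hat v\|_{L^p(D_k)}$. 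Restricting from $\R^d$ back to $\mathcal B_\omega^k$ only decreases the left-hand sides, which yields (ii)--(iii) after undoing the dilation.

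I expect the only genuine difficulty to sit in the first step: securing that the minimal-smoothness parameters of the rescaled perforated pieces $D_k$ --- and therefore the operator norm of the Stein extension and the Poincaré--Wirtinger constant --- are \emph{uniform over all inclusions $k$ and all configurations $\omega\in\Omega$}. This is precisely what Assumption~\ref{assumption 1} is built to deliver: the size bound (ii), the size-proportional separation (iii) and the uniform minimal smoothness of the normalised inclusions (iv) together make it possible to choose the extension domains $\mathcal B_\omega^k$ at scale $\operatorname{diam}\omega^k$ with uniformly controlled shape, so that after dilation by $d_k^{-1}$ one lands in a fixed compact class of Lipschitz domains on which all the classical constants are uniform; this uniform choice of $\{\mathcal B_\omega^k\}_{k}$ is exactly the input quoted from \cite{CCV2} via Remark~\ref{remark geometric assumptions}(b). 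Note that the probabilistic Assumption~\ref{assumption 2} is not actually needed for this (purely deterministic) estimate. Everything else --- the constant-projection trick, the volume and Poincaré comparisons, and the harmless dilation and restriction --- is routine.
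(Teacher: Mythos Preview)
The paper does not supply its own proof of this theorem; it is quoted as \cite[Theorem~3.8]{CCV2} and used as a black box throughout. Your proposal is a correct and standard sketch of how such an extension result is established, and is presumably close in spirit to the argument in \cite{CCV2}: rescale each perforated piece to unit diameter so that the domains $D_k$ are uniformly minimally smooth (this is precisely the content of Assumption~\ref{assumption 1}(iv) together with the construction recorded in Remark~\ref{remark geometric assumptions}(b)), invoke Stein's extension with constants depending only on the minimal-smoothness parameters, apply the mean-projection trick to enforce (i) and to convert the $W^{1,p}$ bound into a pure gradient bound via a uniform Poincar\'e--Wirtinger inequality, then undo the dilation. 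Your remark that the probabilistic Assumption~\ref{assumption 2} is irrelevant to this purely deterministic statement is also correct.

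The only place where your language is looser than the argument requires is the phrase ``fixed compact family of bounded Lipschitz domains'': uniform minimal smoothness does not literally place the $D_k$ in a compact set of domains, and no such compactness is needed. What you actually use---and what is true---is that the Stein extension norm and the Poincar\'e--Wirtinger constant depend only on the minimal-smoothness parameters and the dimension, and these are uniform in $k$ and $\omega$ by hypothesis. With that phrasing tightened, the argument is complete.
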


\begin{remark}
Let us point out that in the current paper we will only extend functions lying in the Sobolev space $W^{1,2}$, namely, we will always be in the case $p=2$. Hence, for all practical purposes, we can suppress the dependence of $c$ from $p$ in Theorem~\ref{theorem extension}, and treat the constant as if it were uniform with respect to $p$.
\end{remark}

Of course, Theorem~\ref{theorem extension} implies functions defined in $W^{1,2}(\R^d\setminus \omega)$ can be extended to functions defined on the whole of $\R^d$. Indeed, one can do this inclusion by inclusion, because extension domains can be constructed in such a way that they do not overlap --- see Remark~\ref{remark geometric assumptions}. With slight abuse of notation, we will be denoting by $v^\ext$ the extension of a function $v$ from any collection of extension domains where it is defined into the corresponding collection of inclusions.

\subsection{Mathematical model}

Put $\M_\omega:=\mathbb{R}^d\setminus \overline{\omega}$. We refer to $\M_\omega$ as the \emph{matrix}, in that it models the ``stiff'' material supporting the random perforation described by $\omega$. Further on, for notational simplicity, we will often tacitly assume the dependence of $\M_\omega$ on $\omega$ and simply write $\M$, instead. 

Given $\omega\in \Omega$, let us consider matrix-valued coefficients $a_\omega:\mathbb{R}^d \to \mathbb{R}^{d^2}$ which are symmetric and satisfy
\begin{equation}
\label{coefficients 1}
   a_\omega =\chi_{\M_\omega}\, a_\omega, 
\end{equation}
\begin{equation}
\label{coefficients 2}
a_-|\xi|^2\le \xi \cdot a_\omega(x)\xi \quad \textrm{and} \quad 
|a_\omega(x) \xi| \le a_+|\xi| \qquad \text{for all}\ x\in \M_\omega,\ \xi \in \R^d. 
\end{equation}

Here 
$a_-,a_+$ are strictly positive constants. 
In plain English, $a_\omega$ is a random matrix coefficient field that vanishes inside the perforation $\omega$ and is elliptic
in the matrix $\M$. We will identify $a_-, a_+$ with the functions that take constant value on $\M$ and vanish on the complement of $\M$ and often write $a$ for $a_\omega$.

\

Given $\overline{f}:\Omega \to \mathbb{R}$, we denote by $f(y,\omega):=\overline{f}(T_y\omega)$ its \emph{stationary extension} (or \emph{realisation}). If $\overline{f}\in L^p(\Omega)$, the latter being the usual space of $p$-integrable functions on $(\Omega, \mathcal{F}, \mathbb{P})$, then $f\in L^p_\loc(\R^d;L^p(\Omega))$ \cite[Chapter~7]{ZKO}.

Recall that a vector field $p\in L^2_\loc(\R^d)$ is said to be
\begin{itemize}
    \item \emph{potential} if there exists $\varphi\in H^1_\loc(\R^d)$ such that $p=\nabla\varphi$;
    \item \emph{solenoidal} if $\int_{\R^d}p\cdot \nabla \varphi=0$ for all $\varphi\in C_0^\infty(\R^d)$.
\end{itemize}
One defines a vector field $\overline{p}\in L^2(\Omega)$ to be potential (resp.~solenoidal) if its stationary extension $y\mapsto f(y,\omega)\in L^2_\loc(\R^d)$ is potential (resp.~solenoidal) almost surely.

Put 
\begin{equation}
\label{V pot}
\mathcal{V}^2_\mathrm{pot}(\Omega):=  \{\overline{p}\in L^2(\Omega)\ |\ \overline{p} \ \text{is potential}, \ \E[\overline{p}]=0\}\,.
\end{equation}
Let $\overline{\psi}\in\mathcal{V}^2_\mathrm{pot}(\Omega)$ be the solution of the problem
\begin{equation}
    \label{corrector equation probability space}
    \E[ a_\omega\, (\psi+\mathbf{e})\cdot \nabla f]=0 \qquad \forall \bar f\in C^\infty(\Omega)\,.
\end{equation}
The existence of $\overline{\psi}\in \mathcal{V}^2_\mathrm{pot}(\Omega)$ is guaranteed
by \cite[\S~8.1]{ZKO} combined with the extension result \cite[Lemma~D.3]{CCV2}.

\begin{definition}
\label{definition corrector}
    We define the \emph{homogenisation corrector} (or \emph{corrector} for short) in the direction $\mathbf{e}\in \mathbb{R}^d$ to be the random variable $\phi=\phi_\omega\in H^1_\mathrm{loc}(\R^d)$ whose gradient is the stationary extension of the solution $\overline{\psi}$ to \eqref{corrector equation probability space}:
    \[
    \nabla\phi(x)=\psi(x,\omega)\,.
    \]
\end{definition}
The corrector $\phi$ is defined uniquely up to an additive constant. Observe that while $\nabla \phi$ is stationary by definition, $\phi$ itself is not, in general, guaranteed to be stationary. In physical space, the corrector satisfies the equation
\begin{equation}
    \label{corrector equation}
    -\nabla \cdot a(\nabla \phi+\mathbf{e})=0\,,
\end{equation}
understood in a weak sense with smooth compactly supported test functions. See also Lemma~\ref{prop:omega_T estimate} for additional details on integrability properties of $\phi$.
We will denote by $\phi^\ext$ the extension of $\phi|_\M$ into $\M$.

\begin{definition}
\label{definition massive corrector}
    We define the \emph{massive homogenisation corrector} (or \emph{massive corrector} for short) on the scale $\sqrt{T}\ge 1$ in the direction $\mathbf{e}\in \mathbb{R}^d$ to be the stationary extension $\phi_T$ of the unique solution $\overline{\phi}_T\in W^{1,2}(\Omega)$ to the equation
  \begin{equation}
    \label{massive corrector equation probability space}
    \E[\chi_\M\frac1T\phi_T f + a_\omega\, (\nabla \phi_T+\mathbf{e})\cdot \nabla f]=0 \qquad \forall f\in C^\infty(\Omega)\,.
\end{equation}
\end{definition}

Realisations of the massive corrector are $H^1_\mathrm{loc}$ functions which satisfy almost surely the equation
\begin{equation}
    \label{massive corrector}
    \chi_\M\frac1T\phi_T-\nabla \cdot a(\nabla \phi_T+\mathbf{e})=0\,,
\end{equation}
understood in a weak sense with smooth compactly supported test functions. 
Additional integrability properties of $\phi_T$ in physical space will be discussed in Lemma~\ref{lemma corrector bounded on balls}. We will denote by $\phi_T^\ext$ the extension of $\phi_T|_\M$ into $\M$.

\

As customary in the subject, the flux $q$ and the massive flux $q_T$ are (random) vector fields are defined in terms of the corrector as
\begin{equation}
\label{definition q}
q:=a(\nabla \phi+\mathbf{e})
\end{equation}
and
\begin{equation}
\label{definition qT}
q_T:=a(\nabla \phi_T+\mathbf{e})\,,
\end{equation}
respectively. One can associate to the flux \eqref{definition q} a tensor field called flux corrector.

\begin{definition}[Flux corrector]
\label{definition flux corrector}
    We define the \emph{flux corrector} to be the unique, modulo an additive constant, tensor field $\sigma\in H^1_\mathrm{loc}(\R^d;L^2(\Omega))$ whose components $\sigma_{ijk}$, $i,j,k\in\{1,\dots,d\}$, are the almost sure solutions of the equation
    \begin{equation}
        \label{flux corrector}
        -\Delta\sigma_{ijk}=\partial_j(q_i)_k-\partial_k (q_i)_j\,,
    \end{equation}
understood in a weak sense with compactly supported test functions. Here $(q_i)_j$ is the $j$-th component of $q_i$, the flux \eqref{definition q} associated with $\mathbf{e}=\hat{e}_i$, where $\hat{e}_i\in \R^d$ is the unit vector along the $i$-th coordinate axis.
\end{definition}

By definition, the flux corrector is skew-symmetric in the last pair of indices, $\sigma_{ijk}=-\sigma_{ikj}$ and it satisfies 
\begin{equation}
\label{eq:sigma_vs_q}
\partial_k\sigma_{ijk}=(q_i)_j\,.
\end{equation} 
As for the corrector, one has that $\nabla \sigma$ is stationary, whereas $\sigma$ itself is not guaranteed to be such. 
Moreover, observe that $\sigma$ is defined through the Poisson equation in the whole space. Notably, the solution $\sigma$ does not directly depend on the specific shape of the inclusions but incorporates their influence solely through the flux $q$. This observation significantly simplifies the analysis by decoupling geometric details from the governing equations. Consequently, the entire theoretical framework developed in~\cite{mjm} can be applied, provided that $q$ satisfies the corresponding bounds required for consistency and regularity.

\

Next, we introduce two quantities that allow one to quantitatively describe the large-scale behaviour of the corrector in an averaged, yet sufficiently precise, sense.

The first is the \emph{excess}, which measures the extent to which $u$ deviates from the space of $a$-affines functions at the level of gradients:
\begin{equation}
\label{definition Exc}
\mathrm{Exc}(\nabla u;B_r):=\inf_{\xi\in\R^d}\fint_{B_{r}} \chi_\M |\nabla u - (\xi + \nabla \phi_\xi)|^2\,.
\end{equation}

The second --- in a sense, the cornerstone of our whole technical analysis --- is $r_*$. Given a constant $\mathcal{C}>0$, we denote by $r_*$ the random variable
\begin{equation}
\label{definition r_*}
r_*=r_*(a_\omega;\mathcal{C}):=\inf\left\{r\ge 1 \ : \ \frac{1}{R^2}\fint_{B_R}\left|(\phi^\ext,\sigma)-\fint (\phi^\ext,\sigma)\right|^2\le \frac1{\mathcal{C}} \quad \forall R\ge r_*  \right\}\,.
\end{equation}
If the RHS of \eqref{definition r_*} is empty, then it is understood that $r_*=+\infty$.

\

Finally, we will introduce an additional assumption on our probability space and present a few meaningful examples satisfying all our assumptions.

\

Given a random variable $F(a)$, one can measure its sensitivity to local perturbations of $a$ by means of its \emph{oscillation}, defined in accordance with
\begin{multline}
\label{oscillation}
\left|\partial^\osc_{Q} F(a) \right|:=\\\sup\left\{F(a_1)-F(a_2)\ \vline\ a_1=a_2=a \quad \text{in}\quad (\R^d\setminus Q)\cup \bigcup_{\omega^k(a)\cap \partial Q \neq \emptyset} \omega^k(a)  \right\}\,.
\end{multline}
That is, $\partial^\osc_{Q}F(a)$ measures how much $F(a)$ varies when one modifies $a$ inside $Q$, but without touching those inclusions that intersect the boundary of $Q$. This is merely conventional for the sake of definiteness: and one could drop the latter requirement without affecting the analysis.

\begin{assumption}[Spectral Gap]
\label{assumption spectral gap}
    There exists constants $\rho,M>0$ such that the probability measure $\mathbb{P}$ satisfies the \emph{Spectral Gap Inequality} with constants $\rho,M$: for every random variable $F=F(a)$ we have
\begin{equation}
\label{spectral gap}
			\operatorname{Var}_L[F]\le \frac{1}{\rho} \,\E_L\left[\int_{\R^d}\left|\partial^\osc_{B_M(x)} F(a) \right|^2\, \dr x\right]\qquad \forall L \geq 1\,,
\end{equation}
where $\mathrm{Var}_L$ and $\E_L$ are variance and expectation taken with respect to the ensemble scaled by $L$, for rescaled coefficient fields $a(L\,\cdot\,)$. 
\end{assumption}

\begin{assumption}[Multi-scale Spectral Gap]
\label{assumption multiscale spectral gap}  
There exists constants $c,\beta,M>0$ such that the probability measure $\mathbb{P}$ satisfies the \emph{Multi-scale Spectral Gap Inequality} with constants $c,\beta,M$: for every random variable $F=F(a)$  we have 

	\begin{equation} 
    \label{multiscale spectral gap}
	\mathrm{Var}_L [F] \leq \E_L \left[ \int_0^{+\infty} \pi (l)(l+1)^{-d} \int_{\R^d}\left|\partial^\osc_{B_{(l+1)M}(x)} F(a) \right|^2\,\dr x\,\dr l	\right] \qquad \forall L\ge 1\,,
	\end{equation}
    where $\pi(l):=c\exp(\frac{l^\beta}{c})$. 
\end{assumption}

\begin{remark} 
\begin{enumerate}[(a)]
    \item It is possible to impose the Multi-scale Spectral Gap Inequality \eqref{multiscale spectral gap} for some integrable $\pi:\R_*\to \R_+$, more generally, obtaining different estimate for the exponential moment of the random variable $r_*$ (we refer the reader to~\cite{mjm} for details). However, in the examples we shall present we will only consider $\pi$ in the form above. Note that the ``ordinary'' Spectral Gap Inequality \eqref{spectral gap} is satisfied when \eqref{multiscale spectral gap} holds for a $\pi$ with compact support. 
    \item We require the Spectral Gap and Multi-scale Spectral Gap Inequalities to hold for every $L\geq  1$ rather than just for $L=1$. One can show that in all examples of interest such condition is satisfied. This is connected with the fact that mixing properties improve under scaling with $L \geq 1$, see also \cite[Remark 2.2]{DG2}.
    \item Instead of imposing the (Multi-scale) Spectral Gap Inequality, one could impose a stronger (Multi-scale) Logarithmic Sobolev Inequality, see, e.g.,~\cite{DG1}. This would again imply a stronger estimate for the exponential moments of the random variable $r_*$. In Theorem~\ref{theorem 2} one could have $d/2$ replaced with $d$, provided the weight $\pi$ possesses exponential decay as imposed above. 	
\end{enumerate}
\end{remark}

\subsubsection{Some examples}

We will now discuss some prototypical examples to demonstrate that our set of assumptions is meaningful and covers a non-empty class of scenarios. These examples illustrate how the (Multi-Scale) Spectral Gap Inequality applies in various random settings.

\subsubsection*{Example 1} 
Let us consider a sequence of i.i.d.~random variables $(V_j)_{j \in \Z^d}$ taking values in $[0,1/3]$. Construct a random perforation by placing a hole $B_{V_j}(z)$ at each $z \in \mathbb{Z}^d$. In this case, the Spectral Gap Inequality~\eqref{spectral gap} is satisfied with $M = 2/3 + \varepsilon$, for all $\varepsilon > 0$, as shown in~\cite[Proposition 2.3]{DG2}. 

\subsubsection*{Example 2} 
Consider a Poisson point process $\mathcal{P}$ with intensity $\mu > 0$. Denoting the points of the process by $\{p_k\}_k$, construct a random perforation by placing a hole of radius 
$r_k := \min\left\{R, \frac{1}{3}\inf_{j\, :\, j \neq k} |p_k - p_j|\right\}$
at each $p_k \in \R^d$. Here, $ R > 0 $ acts as an upper bound on the hole radii. In view of~\cite[Proposition 2.3]{DG2}, the Spectral Gap Inequality~\eqref{spectral gap} is satisfied with $ M = 2R + \varepsilon $, for every $ \varepsilon > 0 $. 

\subsubsection*{Example 3} 
Consider a random parking measure as a point process. This process ensures that there exists $ R > 0 $ such that all points are at least distance $ R $ from one another. Furthermore, it is saturated: no additional point can be added to the configuration while maintaining this minimum distance constraint. Construct a perforation by placing a hole of radius $ R/2 $ at every point of the process. Then, the Multi-Scale Spectral Gap Inequality~\eqref{multiscale spectral gap} is satisfied with weights given by 
$\pi(l) = C \exp(-l / C),$
where $ C > 0 $ depends on structural properties of the process. 

\vspace{\baselineskip}

Further examples can be found in~\cite{DG2}, including random point processes with randomly shaped holes and random radii. The corresponding formulae for the weight function $ \pi(l) $ are also discussed there. Finally, we emphasize that while these (Multi-Scale) Spectral Gap Inequalities hold under specific geometric or probabilistic conditions, they are not guaranteed if one only assumes finite correlation properties. Each scenario must be carefully examined based on its unique structure and assumptions.

\section{Main results}
\label{Main result}

The main result of our paper is the following bound on the exponential moments of the regularity radius $r_*$ for perforated domains. 

\begin{theorem}
\label{theorem 2}
Suppose that the probability measure $P$ satisfies the Multi-scale Spectral Gap Inequality~\eqref{multiscale spectral gap}.
Then there exists a positive constant $C=C(d,a_\pm,\beta)$ such that
\begin{equation} \label{theorem 2 equation 1} 
			\E\left[ e^{r_*^{\min\{d/2,\beta\}}/C} \right]\le 2.
		\end{equation}
		In the case when spectral gap is satisfied, we can take $\beta=\infty$ in \eqref{theorem 2 equation 1}. 
	\end{theorem}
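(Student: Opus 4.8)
The plan is to follow the by-now standard buckling scheme of Gloria–Neukamm–Otto / Josien–Otto (\cite{mjm}), adapted to the perforated setting, and to split the argument into the two complementary regimes $\min\{d/2,\beta\}=d/2$ (which is the genuinely geometric, ``deterministic'' input) and $\min\{d/2,\beta\}=\beta$ (which comes from the stochastic Multi-scale Spectral Gap). The central object is the massive corrector $\phi_T$ of Definition~\ref{definition massive corrector}, which localises the dependence on $a_\omega$ to a window of size $O(\sqrt T)$ and whose power-law growth transfers to that of $\phi$. First I would record the deterministic large-scale regularity theory on the matrix $\M$: using the Extension Theorem~\ref{theorem extension} together with the Hole Filling Lemma proved in Appendix~\ref{Appendix hole filling}, one obtains a Caccioppoli-type inequality and hence a Meyers/De Giorgi improvement of integrability for $a$-harmonic functions in the perforated domain, conditional on a smallness hypothesis for $\mathrm{Exc}(\nabla u;B_R)$ at the relevant scales. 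This yields the excess-decay estimate
\begin{equation}
\mathrm{Exc}(\nabla u;B_r)\le C\Bigl(\tfrac rR\Bigr)^{2\alpha}\,\mathrm{Exc}(\nabla u;B_R)\qquad \text{for } r_*\le r\le R,
\end{equation}
which is equivalent to the large-scale $C^{1,\alpha}$ (in particular Lipschitz) estimate on the random scale $r_*$ defined in \eqref{definition r_*}.

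Next I would set up the hierarchy of propositions that make the buckling run. Step one: a purely deterministic implication showing that control of the averaged corrector oscillation $\frac1{R^2}\fint_{B_R}|(\phi^{\ext},\sigma)-\fint(\phi^{\ext},\sigma)|^2$ below the threshold $1/\mathcal C$ for all $R\ge r$ upgrades to the full large-scale regularity on $B_r$; this is where the flux corrector $\sigma$ enters, via the two-scale expansion and the identity $\partial_k\sigma_{ijk}=(q_i)_j$, and where the remark that $\sigma$ solves a plain Poisson equation (so the theory of \cite{mjm} applies verbatim once $q$ is controlled) is used. Step two: energy estimates for the sensitivity $\nabla\phi_T-\nabla\phi_T'$ when $a$ is perturbed inside a ball $B_r(x)$; the perturbed field solves an elliptic equation with RHS supported in $B_r(x)\cap\M$, so combining the deterministic Caccioppoli/Meyers input with the large-scale Lipschitz estimate on the random scale $r_*$ gives a bound on $|\partial^{\osc}_{B_M(x)}\phi_T|$ (and its spatial averages) in terms of $r_*$ and a localised energy density. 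Step three: plug these oscillation bounds into \eqref{multiscale spectral gap} to control $\operatorname{Var}_L$ of large-scale averages of $\nabla\phi_T$, $\phi_T^{\ext}$ and $\nabla\sigma$; together with stationarity of $\nabla\phi$, $\nabla\sigma$ and a minimal-scale / dyadic union bound, this produces, self-improvingly, the desired stretched-exponential moment bound for $r_*$.

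The key quantitative mechanism is the following: the deterministic ingredients alone force $\frac1{R^2}\fint_{B_R}|(\phi^{\ext},\sigma)-\fint(\phi^{\ext},\sigma)|^2\lesssim R^{-\min\{2,d\}}$ modulo a random additive error coming from the spectral-gap-controlled fluctuations, whose stretched-exponential integrability has exponent governed by $\beta$; the two rates $d/2$ (from $R^{-d}$-type decay of the annealed quantity in dimension bootstrap, saturating at $d/2$ because one is estimating a square root) and $\beta$ (from $\pi(l)=c\exp(l^\beta/c)$) combine into the exponent $\min\{d/2,\beta\}/C$ after a Chebyshev-type conversion of moment bounds into exponential moments. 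I expect the main obstacle to be Step two together with the minimal-scale argument: one must show that the random scale $r_*$, which appears inside the oscillation estimate needed to apply the functional inequality, can be ``absorbed'' — i.e. that the a priori suboptimal bound on $\E[r_*^{\text{power}}]$ feeding the RHS of \eqref{multiscale spectral gap} can be bootstrapped to the optimal stretched-exponential statement — and doing this requires carefully tracking how the constant $\mathcal C$ in \eqref{definition r_*} propagates through the Hole Filling and extension constants, which in the perforated case are genuinely more delicate than in the uniformly elliptic whole-space setting because the energy lives only on $\M$ and the extension operator $E_k$ introduces $\omega$-dependent (but uniformly bounded) distortions. The passage $\beta=\infty$ under the ordinary Spectral Gap is then immediate, since $\pi$ has compact support and the $\beta$-branch disappears, leaving the deterministic rate $d/2$.
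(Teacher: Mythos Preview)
Your proposal is broadly sound in spirit but takes a different architectural route from the paper, and the difference is precisely where you locate the ``main obstacle.'' You plan to bound $|\partial^{\osc}_{B_M(x)}\phi_T|$ (and its averages) using the large-scale Lipschitz estimate on the random scale $r_*$, then feed this into~\eqref{multiscale spectral gap} and buckle to absorb the resulting circular dependence on $r_*$. The paper avoids this circularity altogether. It introduces an auxiliary scale $r_{**}$ defined directly through the massive corrector (Proposition~\ref{proposition 2}), shows $r_*\lesssim r_{**}$ via a Campanato iteration, and then controls $r_{**}$ through the stationary functional $F_T=\int\omega_T\bigl(\tfrac1T\phi_T^2+|\nabla\phi_T|^2+\tfrac1T|g_T|^2+|\nabla g_T|^2\bigr)$ of Proposition~\ref{proposition 3}. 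The two stochastic inputs are then \emph{both $r_*$-free}: (i) $F_T$ is deterministically $\sqrt T$-local (Proposition~\ref{prop:F_t local}), which plugs directly into a concentration-for-local-functions statement (Lemma~\ref{lemma 4}) to give stretched-exponential tails for $\int\omega_T F_t(a(\cdot+x))\,dx-\E[F_t]$; and (ii) the expectation $\E[F_t]$ is shown to be $\lesssim t^{-\epsilon}$ (Corollary~\ref{corollary 6}) using a suboptimal, hole-filling-based sensitivity bound (Lemma~\ref{lemma 3}) that never invokes $r_*$. The exponent $\min\{d/2,\beta\}$ then comes straight out of Lemma~\ref{lemma 4}, i.e.\ from CLT scaling of averages of $\sqrt t$-local quantities over windows of size $\sqrt T$, rather than from the mechanism you describe.

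What each approach buys: your route, closer to \cite{gloria2012quantitative} or \cite{bella_kniely}, would in principle give sharper sensitivity estimates (the $r_*$-based Lipschitz bound is optimal where hole filling is not), but at the price of the self-referential bootstrap you correctly flag as delicate---and in the perforated setting the extension operator makes tracking constants through that loop genuinely unpleasant. The paper's route trades optimality of the intermediate sensitivity bound for a clean decoupling: neither the tail estimate nor the expectation estimate sees $r_*$, so there is no buckling on $r_*$ at the level of the stochastic argument. You also omit the auxiliary field $g_T$ (needed to handle $\sigma_T$ in parallel with $\phi_T$), which is an essential piece of the paper's machinery.
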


From the bounds of the exponential moments, it is well known that one can obtain bounds on the growth of the corrector and, in turn, estimate the homogenization error. This is the content of the upcoming statements. Note that Theorem~\ref{theorem 2} supplies an essentially optimal bound. The estimates below, however, are suboptimal, and can be improved by using more sophisticated approaches available in the literature, e.g., that from~\cite{gloria2012quantitative}.

\begin{theorem}
\label{bella kniely theorem 1.13}
   Suppose that Assumption~\ref{assumption multiscale spectral gap} is satisfied. Let $\epsilon\in(0,1)$ be the constant given by the Hole Filling Lemma~\ref{lemma hole filling}.   Then there exists a random field $\mathfrak{C}(x)$ with stretched exponential moments
   \begin{equation}
       \label{bella kniely theorem 1.13 equation 1}
    \E\left[\exp\left(\frac1C \mathfrak{C}^{\gamma}\right)\right]\le 2
   \end{equation}
   for a sufficiently large constant $C$ and $$\gamma:=\min\left\{\frac12,\frac{\min\{\frac{d}2,\beta\}}{d(1-\epsilon)}\right\},$$ such that the corrector satisfies the inequality
   
   \begin{equation}
       \label{bella kniely theorem 1.13 equation 2}
\left(\fint_{B_1(x)}|\phi|^2 \right)^\frac12\lesssim \left|\fint_{B_1}\phi \right|+\mathfrak{C}(x)\pi(|x|)\,,
   \end{equation}
   with 
   \begin{equation}
        \label{bella kniely theorem 1.13 equation 3}
        \pi(r):=
        \begin{cases}
           r^{\frac{\epsilon d}{2}(\frac{2}{\epsilon d}-1)} & \text{if }1-\frac{2}{\epsilon d}<0\\
            \ln(2+r) & \text{if }1-\frac{2}{\epsilon d}=0\\
             1 & \text{if }1-\frac{2}{\epsilon d}>0\\
        \end{cases}.
   \end{equation}
   \end{theorem}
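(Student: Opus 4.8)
\textbf{Proof strategy for Theorem~\ref{bella kniely theorem 1.13}.}
The plan is to upgrade the exponential moment bound on $r_*$ provided by Theorem~\ref{theorem 2} into a pointwise bound on the corrector via a deterministic growth estimate, with the stretched-exponential moment of $\mathfrak{C}$ tracked through the chain of implications. First I would recall the defining property of $r_*$: for all $R\ge r_*(x)$ one has $\frac{1}{R^2}\fint_{B_R(x)}|(\phi^\ext,\sigma)-\fint(\phi^\ext,\sigma)|^2\le \frac1{\mathcal C}$, i.e.\ $\phi^\ext$ (and $\sigma$) grows sublinearly on scales above $r_*$. The first step is therefore purely deterministic: \emph{given} a value $r_*(x)=r$, derive the bound \eqref{bella kniely theorem 1.13 equation 2} with $\mathfrak{C}(x)$ replaced by some explicit increasing function of $r$, say $\mathfrak{C}(x)\lesssim r^{\theta}$ for a suitable exponent $\theta$ depending on $d$ and $\epsilon$. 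This is where the Hole Filling Lemma~\ref{lemma hole filling} enters: in the perforated setting one does not have the full Caccioppoli/De Giorgi machinery, but hole-filling gives a Morrey-type decay of the Dirichlet energy of $a$-harmonic functions with an exponent $\epsilon\in(0,1)$, which converts the sublinear $L^2$ growth of $\phi^\ext$ on large scales into a quantitative rate. Iterating the hole-filling energy decay from scale $r_*(x)$ down (or up) to scale $\max(1,|x|)$, and using the extension operator of Theorem~\ref{theorem extension} to pass between $\phi$ on $\M$ and $\phi^\ext$ on all of $\R^d$, one obtains $(\fint_{B_1(x)}|\phi|^2)^{1/2}\lesssim |\fint_{B_1(x)}\phi|+ r_*(x)^{\theta}\,\pi(|x|)$ with $\pi$ as in \eqref{bella kniely theorem 1.13 equation 3}; the three cases there correspond to whether the hole-filling exponent makes the accumulated geometric series summable, borderline, or divergent, exactly as in the classical Morrey dichotomy, and the exponent $\frac{\epsilon d}{2}(\frac{2}{\epsilon d}-1)$ is what the series produces in the divergent regime.

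The second step is stochastic bookkeeping. Set $\mathfrak{C}(x):=C\,r_*(x)^{\theta}$ (plus possibly a harmless additive constant) with $\theta$ chosen so that the deterministic estimate of Step~1 holds; matching the claimed $\gamma=\min\{\tfrac12,\tfrac{\min\{d/2,\beta\}}{d(1-\epsilon)}\}$ forces $\theta=\tfrac{d(1-\epsilon)}{2}$ in the generic regime (and $\theta$ essentially $1$, i.e.\ the $\tfrac12$ branch of $\gamma$, when the hole-filling contribution is the bottleneck). Then \eqref{bella kniely theorem 1.13 equation 1} follows from \eqref{theorem 2 equation 1} by the elementary fact that if $\E[\exp(r_*^{\alpha/C})]\le 2$ then $\E[\exp(\tfrac1{C'}(r_*^{\theta})^{\alpha/\theta})]\le 2$ with $\alpha=\min\{d/2,\beta\}$, so the moment exponent of $\mathfrak{C}$ is $\alpha/\theta=\tfrac{\min\{d/2,\beta\}}{d(1-\epsilon)/2}\cdot\tfrac12$; combined with the a priori cap $\mathfrak{C}\ge$ const (which only helps) and the cap $\gamma\le \tfrac12$ coming from the regime where $r_*$ itself appears to first power, this is exactly $\gamma$ as stated. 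Care is needed because $r_*\ge 1$ always, so no issue near zero, and because one must absorb the $|x|$-dependence entirely into the deterministic factor $\pi(|x|)$ rather than into $\mathfrak{C}$, which requires centering $\phi$ at $x$ (hence the appearance of $|\fint_{B_1(x)}\phi|$ on the right-hand side, which is \emph{not} controlled by this argument and is carried along untouched).

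The third, more technical, point is handling the perforation throughout Step~1. The corrector equation \eqref{corrector equation} only holds on $\M$ with a Neumann-type condition on $\partial\omega$, so the energy quantity that decays under hole filling is $\fint_{B_R(x)}\chi_\M|\nabla\phi|^2$ (or the excess $\mathrm{Exc}(\nabla\phi;B_R)$), and I would run the Morrey iteration at the level of the excess, using that $\xi+\nabla\phi_\xi$ with the minimizing $\xi$ plays the role of the ``tangent plane'' and that its contribution is itself controlled by the $r_*$-sublinearity of $\sigma$ and $\phi^\ext$. Then Poincaré on $B_R(x)$ in the perforated domain --- valid with $\omega$-uniform constant thanks to the separation condition Assumption~\ref{assumption 1}(iii) and the extension Theorem~\ref{theorem extension} --- turns gradient control into control of $\fint_{B_R(x)}\chi_\M|\phi-\text{const}|^2$, and a final telescoping over dyadic annuli from scale $1$ up to scale $r_*(x)$ (where the defining inequality of $r_*$ takes over) closes the estimate.

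\textbf{Main obstacle.} The genuinely delicate part is Step~1 in the perforated geometry: making the hole-filling energy decay iterate cleanly across scales while keeping all constants independent of $\omega$, and correctly identifying the exponent $\theta$ (equivalently, the three-case function $\pi$) that the iteration produces. One must be careful that at each scale the ``affine part'' $\xi_R+\nabla\phi_{\xi_R}$ changes with $R$, so the telescoping picks up a sum of increments each estimated by the excess, and controlling that sum is precisely what forces the Morrey dichotomy and the borderline logarithm. The stochastic step, by contrast, is essentially a one-line transfer of moments once $\theta$ is pinned down, and the only subtlety there is bookkeeping the two competing exponents that produce the outer $\min$ in the definition of $\gamma$.
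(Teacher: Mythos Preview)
Your proposal misidentifies the mechanism of the proof. The paper states explicitly that the argument ``retraces the steps of \cite[Theorem~1.13]{bella_kniely}, the key ingredient being the sensitivity estimate obtained in Proposition~\ref{bella kniely proposition 1.12}.'' That proposition bounds $\int_{\R^d}|\partial^\osc_{B_M(x)} F\nabla\phi|^2$ by $C((r+r_*)^{1-\epsilon}/r)^d$ for linear functionals $F\nabla\phi=\int g\cdot\nabla\phi^\ext$. This is then fed into the Spectral Gap Inequality~\eqref{multiscale spectral gap} to control the \emph{variance} (and higher moments) of spatial averages of $\nabla\phi^\ext$ directly, and a subsequent dyadic telescoping from the origin out to $x$ produces both the weight $\pi(|x|)$ and the random field $\mathfrak{C}(x)$. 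Your Steps~1--2 bypass this entirely: you propose to set $\mathfrak{C}(x)\sim r_*(x)^\theta$ and transfer the exponential moment bound of Theorem~\ref{theorem 2} directly. But a purely deterministic hole-filling iteration centred at $x$ and using only $r_*(x)$ yields a bound on the local oscillation of $\phi$ that depends only on $r_*(x)$ --- there is no reason for $|x|$ to appear at all, so your claim that the ``accumulated geometric series'' produces $\pi(|x|)$ is not substantiated. The factor $\pi(|x|)$ in the actual argument arises because the sensitivity estimate of Proposition~\ref{bella kniely proposition 1.12} is anchored at the origin (it uses $r_*(0)$; cf.\ the Remark immediately following Theorem~\ref{bella kniely theorem 1.13}), and the telescoping sum spanning the distance $|x|$ is what generates the three-case Morrey dichotomy.

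Equally, your account of the cap $\gamma\le\tfrac12$ is incorrect. It does not come from ``$r_*$ itself appearing to first power'' in some regime; it comes from the fact that the Spectral Gap Inequality applied to $F\nabla\phi$ gives at best Gaussian-type concentration (stretched-exponential exponent $2$ on the random variable, hence $\tfrac12$ after the square root structure of the $L^2$ norms in the telescoping). Your route, which uses the spectral gap only indirectly through the moment bound on $r_*$, cannot recover this: if $\mathfrak{C}=r_*^{d(1-\epsilon)/2}$ as you propose, Theorem~\ref{theorem 2} would give exponent $2\min\{d/2,\beta\}/(d(1-\epsilon))$, off by a factor of two from the stated $\gamma$ and with no mechanism for the $\tfrac12$ cap. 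In short, the missing idea is that one must apply the functional inequality a \emph{second} time --- to averages of $\nabla\phi$, via the sensitivity estimate --- rather than relying solely on the already-packaged moments of $r_*$.
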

   

\begin{proof}[Proof of Theorem~\ref{bella kniely theorem 1.13}]
    The proof retraces the steps of \cite[Theorem~1.13]{bella_kniely}, the key ingredient being the sensitivity estimate obtained in Proposition~\ref{bella kniely proposition 1.12}\,.
\end{proof}

\bbb

Let us know state a direct consequence of the corrector bounds stated above:

\begin{corollary}
\label{corollary 1.14}

Let $u_\varepsilon$ and $u_\mathrm{hom}$ be the solutions of
\[
-\nabla \cdot a_{T_{\cdot/\epsilon}\omega}\nabla u_\epsilon=\nabla \cdot (\chi_{\M_{\cdot/\epsilon}\omega}\,g), \qquad 
-\nabla \cdot a_\mathrm{hom}\nabla u_\hom=\nabla \cdot g, \qquad a_\hom\mathbf{e}:=\E[a(\nabla\phi+\mathbf{e})]\,,
\]
where $g\in H^1(\R^d)$. Let
\[
z_\varepsilon:=u_\varepsilon-
\left.\left(u{_\mathrm{hom},\varepsilon}+\varepsilon(\phi_i)_{T_{\cdot/\epsilon}\omega}\partial_iu_{\mathrm{hom},\varepsilon}\right)\right|_{\M_{\cdot/\epsilon}\omega}, \qquad u_{\mathrm{hom},\varepsilon}(x):=\fint_{B_\varepsilon(x)} u_{\mathrm{hom}}\,.
\]
Then we have
\begin{equation}
    \|\nabla z_\varepsilon\|_{L^2} \le \varepsilon\,\|\nabla g\|_{L^2}
    +
    \mathfrak{C}_{\varepsilon,g} \,\varepsilon\,\pi(\varepsilon^{-1})\|\pi(|x|)\nabla g\|_{L^2}\,,
\end{equation}
where $\mathfrak{C}_{\varepsilon,g}$ is a random variable satisfying
\begin{equation}
    \E\left[\exp\left(\frac1C\mathfrak{C}_{\varepsilon,g}^{\gamma}\right)\right]<2
\end{equation}
for a sufficiently large constant $C$, and $\gamma$ is defined as in Theorem~\ref{bella kniely theorem 1.13}.
\end{corollary}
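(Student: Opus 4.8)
\textbf{Proof plan for Corollary~\ref{corollary 1.14}.}

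The plan is to follow the classical two-scale expansion argument for quantitative homogenization, adapted to the perforated setting, with the corrector bound \eqref{bella kniely theorem 1.13 equation 2} playing the role of the key quantitative input. First I would write the equation satisfied by the error $z_\varepsilon$. Testing the equations for $u_\varepsilon$ and for the two-scale ansatz $w_\varepsilon:=u_{\mathrm{hom},\varepsilon}+\varepsilon(\phi_i)_{T_{\cdot/\varepsilon}\omega}\partial_i u_{\mathrm{hom},\varepsilon}$ against a test function supported in $\M_{\cdot/\varepsilon}\omega$, one finds that $-\nabla\cdot(a_{T_{\cdot/\varepsilon}\omega}\nabla z_\varepsilon)=\nabla\cdot h_\varepsilon$, where $h_\varepsilon$ collects the mismatch terms. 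Using the corrector and flux-corrector equations \eqref{corrector equation} and \eqref{flux corrector} (rescaled to scale $\varepsilon$), the usual cancellations show that $h_\varepsilon$ is a sum of terms each carrying either a factor $\varepsilon$ (from differentiating the slowly varying $\nabla u_{\mathrm{hom},\varepsilon}$) or a factor $(\varepsilon\phi,\varepsilon\sigma)$ multiplying second derivatives of $u_{\mathrm{hom},\varepsilon}$; the flux corrector $\sigma$ is introduced precisely to rewrite $q-a_{\mathrm{hom}}$ as a divergence of a skew field so that this cancellation goes through. The mollification at scale $\varepsilon$ in the definition of $u_{\mathrm{hom},\varepsilon}$ ensures that $\|\varepsilon\nabla^2 u_{\mathrm{hom},\varepsilon}\|_{L^2}\lesssim\|\nabla u_{\mathrm{hom}}\|_{L^2}\lesssim\|g\|_{L^2}$ and, more usefully, $\|\nabla^2 u_{\mathrm{hom},\varepsilon}\|_{L^2(B)}\lesssim\varepsilon^{-1}\|\nabla u_{\mathrm{hom}}\|_{L^2(2B)}\lesssim \varepsilon^{-1}\|\nabla g\|_{L^2(2B)}$ locally, together with Caccioppoli/Meyers-type bounds for $u_{\mathrm{hom}}$ in terms of $g$.

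Next I would run the energy estimate: testing $-\nabla\cdot(a_{T_{\cdot/\varepsilon}\omega}\nabla z_\varepsilon)=\nabla\cdot h_\varepsilon$ with $z_\varepsilon$ itself and using uniform ellipticity in the matrix $\M$ gives $a_-\|\nabla z_\varepsilon\|_{L^2(\M_{\cdot/\varepsilon}\omega)}^2\lesssim \|h_\varepsilon\|_{L^2(\M_{\cdot/\varepsilon}\omega)}\|\nabla z_\varepsilon\|_{L^2(\M_{\cdot/\varepsilon}\omega)}$, hence $\|\nabla z_\varepsilon\|_{L^2}\lesssim\|h_\varepsilon\|_{L^2}$. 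It remains to estimate $\|h_\varepsilon\|_{L^2}$. The terms with a bare factor $\varepsilon$ contribute $\varepsilon\|\nabla g\|_{L^2}$, which is the first term on the right-hand side. The terms of the form $\varepsilon(\phi_i,\sigma_i)_{T_{\cdot/\varepsilon}\omega}\,\nabla^2 u_{\mathrm{hom},\varepsilon}$ are handled by a dyadic/Hölder argument: one localizes over a partition of $\R^d$ into unit-size cubes (or balls $B_1(x)$), applies \eqref{bella kniely theorem 1.13 equation 2} on each $B_\varepsilon(y)$ after rescaling, which bounds $\varepsilon^{-d}\fint_{B_\varepsilon(y)}|\varepsilon\phi_{T_{\cdot/\varepsilon}\omega}-c_y|^2$ by $\mathfrak C(y/\varepsilon)^2\pi(|y|/\varepsilon)^2\lesssim \mathfrak C(y/\varepsilon)^2\pi(\varepsilon^{-1})^2\pi(|y|)^2$ using the submultiplicativity/monotonicity of the algebraic weight $\pi$ from \eqref{bella kniely theorem 1.13 equation 3}. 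The constants $c_y$ can be reabsorbed because $\nabla^2 u_{\mathrm{hom},\varepsilon}$ integrates against them to something controlled by $\nabla u_{\mathrm{hom},\varepsilon}$ via integration by parts, or more simply one works directly with $\|\varepsilon\phi_{T_{\cdot/\varepsilon}\omega}\,\nabla^2u_{\mathrm{hom},\varepsilon}\|_{L^2}$ and inserts $c_y$ cube-by-cube. Collecting, defining $\mathfrak C_{\varepsilon,g}$ as a suitable $\ell^\infty$-type or weighted-$\ell^2$ combination of the values $\{\mathfrak C(y/\varepsilon)\}$ appearing in the estimate, one arrives at the claimed bound with the weight $\varepsilon\,\pi(\varepsilon^{-1})\|\pi(|x|)\nabla g\|_{L^2}$.

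Finally, the stretched-exponential moment bound on $\mathfrak C_{\varepsilon,g}$ follows from that on $\mathfrak C$ in \eqref{bella kniely theorem 1.13 equation 1}: since $\mathfrak C_{\varepsilon,g}$ is built as a finite-energy average (a normalized weighted sum) of stationary copies $\mathfrak C(y/\varepsilon)$, one uses stationarity of $\mathfrak C$ together with the elementary fact that a convex average of random variables each having $\E[\exp(\cdot^\gamma/C)]\le 2$ again has a stretched-exponential moment of the same exponent $\gamma$ (with a possibly larger constant $C$); here one invokes the standard inequality $\exp((\sum_i\lambda_i X_i)^\gamma/C)\le\sum_i\lambda_i\exp(X_i^\gamma/C)$ for $\gamma\le 1$ and $\sum_i\lambda_i=1$, together with the fact that the $L^2$-weights used to define $\mathfrak C_{\varepsilon,g}$ can be normalized since $\|\pi(|x|)\nabla g\|_{L^2}$ has been pulled out. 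The main obstacle I anticipate is the bookkeeping in the two-scale expansion in the perforated domain: one must carefully track which terms live only in $\M_{\cdot/\varepsilon}\omega$, ensure the test functions and the extension $\phi^{\mathrm{ext}}$ are used consistently so that the boundary terms on $\partial\omega$ vanish (this is where the Neumann condition and the extension operator of Theorem~\ref{theorem extension} enter), and verify that the constants $c_y$ can indeed be reabsorbed without picking up an extra power of $\varepsilon^{-1}$ — essentially the same delicate point as in the proof of \cite[Theorem~1.13]{bella_kniely}, from which this corollary is a fairly direct consequence.
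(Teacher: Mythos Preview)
Your proposal is correct and follows essentially the same approach as the paper: the paper itself does not give a detailed proof, stating only that the corollary ``follows a classical argument in the subject based on a two-scale expansion of the solution, see, e.g., \cite[Corollary~1.14]{bella_kniely}'', which is precisely the route you sketch. Your outline in fact supplies considerably more detail than the paper does, and the anticipated obstacles you flag (bookkeeping in the perforated domain, use of the extension operator, absorbing the constants $c_y$) are exactly the minor adaptations needed relative to \cite{bella_kniely}.
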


   The proof to the above corollary follows a classical argument in the subject based on a two-scale expansion of the solution, see, e.g., \cite[Corollary~1.14]{bella_kniely}, hence we omit the details for the sake of brevity. 

\section{Proof of Theorem~\ref{theorem 2}}
\label{Proof of main theorem}

This section contains a collection of results that build up to the proof of our main statement, Theorem~\ref{theorem 2}. Our overall strategy follows closely pioneering ideas from \cite{mjm}. Indeed, the statements below are counterparts of statements from \cite{mjm} in the perforated, rather than uniformly elliptic, setting.

\

The first statement establishes regularity properties of $a$-harmonic functions depending on the quantification of the (sub)linear growth of the corrector expressed by the random variable $\delta$ defined in accordance with \eqref{proposition 1 equation 3}. Note that qualitative analysis tells us that $\delta \to 0$ almost surely as the scale at which we measure the linear growth gets larger and larger.

\begin{proposition}
\label{proposition 1}
Suppose that $u$ is an $a$-harmonic function in a Lipschitz domain $\tilde B_R$ such that $B_{R}\subset \tilde B_R\subset B_{R+1}$ and $\partial \tilde B_R \cap \omega=\emptyset$, with $R>4$. Namely,
\begin{equation}
\label{proposition 1 equation 1}
-\nabla \cdot a \nabla u=0 \qquad \text{in} \ \tilde B_{R}\,.
\end{equation}
Then for all $1 < r\le R/4$ and $1 < \rho \le R/8$ there exists $\xi\in \R^d$ such that
\begin{multline}
\label{proposition 1 equation 2}
	\fint_{B_{r}} \chi_\M |\nabla u - (\xi + \nabla \phi_\xi)|^2 
\\\leq C(d,a_\pm)  \left(  (1+\delta) \frac{r^2}{R^2} + \frac{R^d}{r^d}\left( \delta\left(\frac{R}{\rho} \right)^{d+2}     +\frac{\rho}{R}\right)  \right) \fint_{B_{R}}\chi_\M|\nabla  u|^2\,,
\end{multline}
where
\begin{equation}
   \label{proposition 1 equation 3}
	\delta:=\max\left\{ \frac{1}{R^2} \fint_{B_R}  |(\phi^\ext,\sigma) - \fint_{B_R} (\phi^\ext,\sigma)|^2, \, \frac{1}{r^2} \fint_{B_{2r}}  |\phi^\ext - \fint_{B_{2r}} \phi^\ext|^2, 
\right\}. 
\end{equation}
Furthermore, we have
\begin{equation}
\label{proposition 1 equation 4}
\frac1{C(d,a_\pm)}\left(1-C(d) \delta^{1/4}\right)^3|\xi|^2 \le \fint_{B_r} \chi_\M |\xi+\nabla\phi_\xi|^2\le C(d,a_\pm) (1+\delta) |\xi|^2 \quad \forall\xi \in \R^d.
\end{equation}
\end{proposition}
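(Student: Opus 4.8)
\textbf{Proof strategy for Proposition~\ref{proposition 1}.} The plan is to follow the now-classical scheme for deriving large-scale $C^{1,\alpha}$-type excess decay from sublinearity of the corrector, adapted to the perforated setting via the Extension Theorem. The backbone is a one-step improvement: if $u$ is $a$-harmonic in $B_R$, then at a comparably large but smaller scale $r$, the gradient $\nabla u$ is well-approximated (in the $\chi_\M$-weighted $L^2$ sense) by a corrected affine field $\xi + \nabla\phi_\xi$. First I would extend $u|_\M$ to $u^\ext$ on all of $B_{R}$ (minus a boundary layer) using Theorem~\ref{theorem extension} inclusion by inclusion, so that one may work with genuine Sobolev functions on balls; the energy of the extension is controlled by $\fint_{B_R}\chi_\M|\nabla u|^2$ up to a constant depending only on $d$ and the minimal-smoothness parameters. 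Then I would invoke a Caccioppoli/hole-filling inequality on perforated annuli (the version proved in Appendix~\ref{Appendix hole filling}) to gain the extra $\rho/R$-type smallness and to reduce matters to an interior estimate.

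The heart of the argument is the harmonic approximation step. One compares $u$ with the solution $v$ of the homogenized equation $-\nabla\cdot a_\hom \nabla v = 0$ in a slightly smaller ball, with the same boundary data (in the extended sense). The two-scale expansion $v + \phi_i\,\partial_i v$ (with the flux corrector $\sigma$ used to write the error in divergence form, exactly as in \cite{mjm}) produces an error whose energy is controlled by the $H^{-1}$-type norm of terms of the shape $(\phi,\sigma)\,\nabla^2 v$; here is where the quantity $\delta$ from \eqref{proposition 1 equation 3} enters, since $\frac{1}{R^2}\fint_{B_R}|(\chi_\M\phi,\sigma)-\langle\cdot\rangle|^2$ is precisely the normalized sublinearity measure. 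Classical interior estimates for the constant-coefficient equation give $\fint_{B_r}|\nabla v - \nabla v(0)|^2 \lesssim (r/R)^2 \fint_{B_R}|\nabla v|^2$, and one sets $\xi := \nabla v(0)$ (projected appropriately). Combining the approximation error with this Campanato-type decay and re-expanding in terms of $\nabla\phi_\xi$ yields \eqref{proposition 1 equation 2}; the mixed term $\delta (R/\rho)^{d+2}$ reflects the cost of localizing the corrector contribution to the inner ball $B_\rho$ before propagating it, and the $R^d/r^d$ prefactors come from passing from averages on $B_R$ to averages on $B_r$.

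For the second assertion \eqref{proposition 1 equation 4}, the upper bound is immediate: $\fint_{B_r}\chi_\M|\xi+\nabla\phi_\xi|^2 \le 2|\xi|^2 + 2\fint_{B_r}|\nabla\phi_\xi|^2$, and the corrector energy on $B_r$ is bounded by $C(1+\delta)|\xi|^2$ using the defining equation \eqref{corrector equation} for $\phi_\xi$ tested against a cutoff, together with the sublinearity input controlling the lower-order term. The lower bound is the more delicate half: one must show $\fint_{B_r}\chi_\M|\xi+\nabla\phi_\xi|^2$ cannot be much smaller than $|\xi|^2$, i.e.~the corrected affine field does not degenerate. The standard route is to test \eqref{corrector equation} (for $\phi_\xi$) with $(\text{cutoff})\cdot(\xi\cdot x + \phi_\xi)$ to get $\fint_{B_r}\chi_\M(\xi+\nabla\phi_\xi)\cdot\xi \ge c|\xi|^2 - (\text{boundary-layer error})$, where the error is controlled by $\frac1r(\fint_{B_{2r}}|\chi_\M\phi_\xi - \langle\cdot\rangle|^2)^{1/2}$ times the energy — hence by $\delta^{1/2}$, iterated to extract the $(1-C\delta^{1/4})^3$ factor; Cauchy–Schwarz then converts the linear lower bound into the quadratic one. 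The main obstacle throughout is the perforation: every integration by parts generates boundary terms on $\partial\omega^k$, and the Neumann condition $a\nabla(\cdot)\cdot n = 0$ together with the uniform extension estimates of Theorem~\ref{theorem extension} and Remark~\ref{remark geometric assumptions}(b) must be used carefully to absorb them — in particular to ensure that all constants depend only on $d$, $a_\pm$ and the (uniform) geometric parameters, not on the realization $\omega$. I expect this bookkeeping, rather than any single inequality, to be the technically demanding part.
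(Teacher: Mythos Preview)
Your overall strategy for \eqref{proposition 1 equation 2} matches the paper's: extend $u$, solve the homogenized Dirichlet problem for $u_\hom$ on a slightly smaller ball, form the two-scale error $u - (u_\hom + \eta\,\partial_i u_\hom\,\phi_i)$, rewrite the right-hand side of its equation using the flux corrector $\sigma$, combine with interior regularity of $u_\hom$, and set $\xi = \nabla u_\hom(0)$. Two technical points, however, differ from the paper and would need correction.

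First, the $\rho/R$ term does not come from hole-filling. The paper selects, via Fubini, a good radius $R'\in(R/2,R-1)$ on which $\int_{\partial B_{R'}}|\nabla u^\ext|^2 \lesssim R^{-1}\int_{B_R}\chi_\M|\nabla u|^2$, and then invokes the boundary-layer estimate $\int_{B_{R'}\setminus B_{R'-2\rho}}|\nabla u_\hom|^2 \lesssim \rho\int_{\partial B_{R'}}|\nabla^{\rm tan} u_\hom|^2$ for the $a_\hom$-harmonic function; this is what produces the clean linear factor $\rho/R$. Hole-filling would only give $(\rho/R)^{\epsilon d}$ with $\epsilon<1$. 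Here $\rho$ is the width of the cutoff layer near $\partial B_{R'}$, and the $(R/\rho)^{d+2}$ arises from bounding $|\nabla^2 u_\hom| + \rho^{-1}|\nabla u_\hom|$ at distance $\rho$ from the boundary via interior estimates.

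Second, your route to the lower bound in \eqref{proposition 1 equation 4} via testing the corrector equation does not work as stated: testing $-\nabla\cdot a(\xi+\nabla\phi_\xi)=0$ with a cutoff times $(\xi\cdot x+\phi_\xi)$ yields a Caccioppoli-type \emph{upper} bound on the energy, not a lower one. The paper uses no PDE here: it passes to the extension $\phi_\xi^\ext$, applies Jensen in the form $(\int\eta)^{-1}\int\eta\,|\xi+\nabla\phi_\xi^\ext|^2 \ge \bigl|\xi + (\int\eta)^{-1}\int\eta\,\nabla\phi_\xi^\ext\bigr|^2$, integrates by parts to move $\nabla$ onto $\eta$, and bounds the resulting term by $C(r/\rho)\,\delta^{1/2}|\xi|$. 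Choosing the cutoff width so that $\rho/r = \delta^{1/4}$ then produces the factor $(1-C\delta^{1/4})^3$ in a single step --- there is no iteration.
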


\begin{proof}
Let us start by proving~\eqref{proposition 1 equation 2}.
 Without loss of generality, we assume that $\fint_{B_R} (\phi^\ext,\sigma) = 0$.
 We begin with  Caccioppoli's estimate which will be utilised later. Testing equation \eqref{proposition 1 equation 1} with $\eta^2 u$, where $\eta$ is a cut-off function between $B_{\hat R-\hat \rho}$ and $B_{\hat R}$, for some  $0<\hat\rho<\hat R\leq R$ is some positive number, and using the identity 
\begin{equation}
	a\nabla(\eta u) \cdot \nabla(\eta u) = a\nabla u \cdot \nabla(\eta^2 u) + ua  \nabla \eta \cdot \nabla(\eta u) - u a \nabla (\eta u) \cdot \nabla \eta + u^2 a\nabla\eta \cdot \nabla\eta,
\end{equation} 
we get the bound
\begin{equation}
	\int \chi_\M |\nabla(\eta u)|^2 \lesssim  \int \chi_\M  |u \nabla \eta|^2
\end{equation}
and, consequently,
\begin{equation}\label{1.6}
	\int_{B_{\hat R-\hat\rho}} \chi_\M |\nabla u|^2 \lesssim \frac{1}{\hat\rho^2} \int_{B_R \setminus B_{\hat R-\hat\rho}} \chi_\M u^2.
\end{equation}

Using Fubini we see that for $T>2$ and $f\in L^1(0,T)$, $f\geq 0$ 
\begin{equation}\label{eqPeter}
\int_1^T \int_{t-1}^t f(s)\ds \dt \leq \int_0^T f(t)\dt.
\end{equation}

Let $u^\ext$ be the extension of $u|_{\M\cap B_{R}}$ into $B_{R-1}$. Applying~\eqref{eqPeter} to $f(t):=\int_{\partial B_{t}}  |\nabla u^\ext|^2$ we have
\begin{equation}
	\int_{R/2}^{R-1} \Big(\int_{\partial B_{t}}  |\nabla u^\ext|^2 + \int_{B_{t}\setminus B_{t-1}} |\nabla  u^\ext |^2\Big) dt \leq 2 \int_{B_{R-1} } |\nabla u^\ext|^2.
\end{equation} 
From this we immediately conclude  that there exist a radius $R'\in (\frac{1}{2}R,R-1)$ such that \footnote{Here one takes the precise representative of $u^\ext$, see e.g.  \cite{EG}, in particular, the tangential gradient $\nabla^{tan} u^\ext \in L^2(\partial B_{R'})$ is well defined for every $R'$.}
\begin{equation}\label{1.7}
	\int_{\partial B_{R'}}  |\nabla u^\ext|^2 \lesssim \frac{1}{R} \int_{B_{R-1} } |\nabla u^\ext|^2 \lesssim \frac{1}{R} \int_{B_{R}} \chi_\M |\nabla  u|^2,
\end{equation}
and
\begin{equation}\label{1.8a}
	\int_{B_{R'}\setminus B_{R'-1}} |\nabla  u^\ext |^2 \lesssim \frac{1}{R} \int_{B_{R-1} } |\nabla u^\ext|^2 \lesssim \frac{1}{R} \int_{B_{R}} \chi_\M |\nabla  u|^2.
\end{equation}

Let $u_\hom$ be the $a_\hom$-harmonic extension of $u^\ext|_{B_{R-1}\setminus B_{R'}}$ into $B_{R'}$:
\begin{equation}\label{1.8}
	-\nabla\cdot a_\hom\nabla u_\hom = 0 \mbox{ in } B_{R'},\quad u_\hom = u^\ext \mbox{ on }\partial B_{R'}.
\end{equation}
We  approach  the bound \eqref{proposition 1 equation 2} by considering first the difference between $u$ and $u_\hom + \eta \partial_i u_\hom  \phi_i$ in the ball $B_{R'}$, where $\eta $ is a cut-off function between $B_{R'-2\rho}$ and $B_{R'-\rho}$, where we additionally assume that $1<\rho\leq R'/4$. However, due to subtleties of controlling the extension into the perforations, we need to define it in a specific way. 
\begin{equation}
	v:=\left\{ 
	\begin{array}{ll}
		u^\ext - u_\hom, & \mbox{ in } \bigcup_{\B_\omega^k \cap \partial B_{R'} \neq \emptyset} \omega^k,
		\\
		u - (u_\hom + \eta \partial_i u_\hom  \phi_i), &  \mbox{ in } \M \cap B_{R'},
		\\
		u^\ext - (u^\ext_\hom + (\eta \partial_i u_\hom  \phi_i)^\ext), & \mbox{ in } \bigcup_{\B_\omega^k \subset  B_{R'} } \omega^k.
	\end{array}
	\right.
\end{equation}
Note that $v\in H^1_0(B_{R'})$. In the above the superscript $\cdot^\ext$ stands for the extension of the restriction of the function to the set $\M$ into the perforations $\omega^k$.
Although the extension into  the perforations does not appear in the next few calculations, it will play an important role at a later stage. 

The following calculation is to be understood in the weak sense (i.e. it holds for any test function from $H^1(\R^d)$):
\begin{equation}
	-\nabla\cdot a \nabla v = \nabla \cdot (1-\eta)a \nabla  u_\hom + \nabla (\eta \partial_i u_\hom) \cdot a(\nabla \phi_i + e_i) + \nabla \cdot \phi_i a \nabla (\eta \partial_i u_\hom).
\end{equation}
In the above we used that $-\nabla\cdot a (\nabla \phi_i + e_i) =0$.
Utilizing \eqref{1.8} we have
\begin{equation}
	\nabla (\eta \partial_i u_\hom) \cdot a_\hom  e_i = - \nabla \cdot ((1-\eta) a_\hom \nabla  u_\hom) 
\end{equation}
(here we have actual $u_\hom$ everywhere rather than its harmonic extension).
Combining the last two identities yields
\begin{equation}
	-\nabla\cdot a \nabla v = \nabla \cdot ((1-\eta)(a - a_\hom) \nabla  u_\hom + \nabla (\eta \partial_i u_\hom) \cdot (a(\nabla \phi_i + e_i) - a_\hom e_i) + \nabla \cdot \phi_i a \nabla (\eta \partial_i u_\hom).
\end{equation}
Next, recalling~\eqref{eq:sigma_vs_q}, we make a substitution
\begin{equation}
	\nabla\cdot\sigma_i = q_i = a(\nabla \phi_i +e_i) - a_\hom e_i,
\end{equation}
and use the skew-symmetry of $\sigma_i$ to get
\begin{equation}
	-\nabla\cdot a \nabla v = \nabla \cdot ((1-\eta)(a - a_\hom) \nabla  u_\hom + (\phi_i a - \sigma_i)\nabla (\eta \partial_i u_\hom)).
\end{equation}
Testing this identity with $v$ and using H\"older's inequality we have
\begin{equation}
	\int_{B_{R'}}  a \nabla v\cdot \nabla v = 	\int_{B_{R'}} ((1-\eta)(a - a_\hom) \nabla u_\hom + (\phi_i a - \sigma_i)\nabla (\eta \partial_i u_\hom))\cdot \nabla v,
\end{equation}
\begin{multline}
	\int_{B_{R'}} \chi_\M |\nabla v|^2 \lesssim  \|(1-\eta) \nabla u_\hom\|_{B_{R'}} \|\nabla v \|_{B_{R'}\setminus B_{R'-2\rho}}+
	\\ 
	\|\chi_\M \phi \nabla(\eta \nabla u_\hom)\|_{B_{R'-\rho}} 	\|\nabla v\|_{B_{R'-\rho}} + \|\sigma \nabla(\eta \nabla u_\hom)\|_{B_{R'-\rho}} 	\|\nabla v\|_{B_{R'-\rho}}.
\end{multline}
By construction of $v$ and  properties of the extension operator we have
\begin{equation}
	\|\nabla v\|_{B_{R'-1}} \lesssim \|\chi_\M \nabla v\|_{B_{R'}}.
\end{equation}
Then, using Young's inequality we can absorb this term by the left hand side:
\begin{multline}
	\int_{B_{R'}} \chi_\M |\nabla v|^2 
	\lesssim  \|(1-\eta) \nabla u_\hom\|_{B_{R'}}^2 +
	\|\chi_\M \phi \nabla(\eta \nabla u_\hom)\|_{B_{R'-\rho}}^2 
	\\
	+ \|\sigma \nabla(\eta \nabla u_\hom)\|_{B_{R'-\rho}}^2 
	+  \|\nabla v \|_{B_{R'}\setminus B_{R'-1}}^2.
\end{multline}
Then $\chi_\M \nabla v = 	\chi_\M( \nabla u - (\nabla u_\hom + \eta \partial_i u_\hom  \nabla \phi_i + \phi_i \nabla (\eta \partial_i u_\hom) ))$ yields
\begin{multline}
	\int_{B_{R'-2\rho}} \chi_\M |\nabla u - \partial_i u_\hom (e_i+ \nabla \phi_i)|^2 
	\lesssim  \int_{B_{R'}\setminus B_{R'-2\rho}} |\nabla u_\hom|^2 +
	\\
	+ \int_{B_{R'-\rho}} |(\chi_\M \phi ,\sigma)|^2 (|\nabla^2 u_\hom|^2 + \rho^{-2}|\nabla u_\hom|^2 ) 
	+  \int_{B_{R'}\setminus B_{R'-1}} |\nabla v |^2.
\end{multline}
For $r\leq \frac{R'}{2} \leq R'-2\rho$ and setting $\xi = \nabla u_\hom(0)$ we have 
\begin{multline}\label{1.21}
	\int_{B_{r}} \chi_\M |\nabla u - (\xi + \nabla \phi_\xi)|^2 
	\\
	\lesssim  	\int_{B_{r}} \chi_\M |I+\nabla \phi|^2|\nabla u_\hom  - \nabla u_\hom(0)|^2 + 
	\int_{B_{R'}\setminus B_{R'-2\rho}} |\nabla u_\hom|^2 
	\\
	+ \int_{B_{R'-\rho}} |(\chi_\M \phi ,\sigma)|^2 (|\nabla^2 u_\hom|^2 + \rho^{-2}|\nabla u_\hom|^2 ) 
	+
	\int_{B_{R'}\setminus B_{R'-1}} |\nabla u^\ext |^2
	\\
	\lesssim r^2 \sup_{B_r} |\nabla^2 u_\hom|^2 	\int_{B_{r}} \chi_\M |id+\nabla \phi|^2 
	\\
	+	 \sup_{B_{R'-\rho}}  (|\nabla^2 u_\hom|^2 + \rho^{-2}|\nabla u_\hom|^2 )  \int_{B_{R'-\rho}} |(\chi_\M \phi ,\sigma)|^2 
	\\
	+	\int_{B_{R'}\setminus B_{R'-2\rho}} |\nabla u_\hom|^2 
	+
	\int_{B_{R'}\setminus B_{R'-1}} |\nabla u^\ext |^2.
\end{multline}

Next we derive bounds for the terms on the right hand side of \eqref{1.21}.

Testing \eqref{1.8} with $u_\hom - u^\ext$ we have
\begin{equation}\label{1.23}
	\|\nabla u_\hom\|_{B_{R'}} \lesssim 	\|\nabla u^\ext\|_{B_{R'}}   \lesssim 	\|\chi_\M\nabla  u\|_{B_{R}}.  
\end{equation}

The following estimates have been shown to hold for an $a_\hom$-harmonic function on $B_{R}$ in \cite{Giaquinta} (here $R$ is generic):
\begin{equation}\label{1.24}
	R^2 \sup_{B_{R/2}} |\nabla^2 u_\hom|^2 + \sup_{B_{R/2}} |\nabla u_\hom|^2 \lesssim \fint_{B_{R}}   |\nabla u_\hom|^2, 
\end{equation}
\begin{equation}\label{1.25}
	\int_{B_{R'}\setminus B_{R'-2\rho}} |\nabla u_\hom|^2  \lesssim \rho \int_{\partial B_{R'}} |\nabla^{tan} u_\hom|^2.
\end{equation}
Combining \eqref{1.23}, \eqref{1.24} and recalling that  $r \leq R'/2$ and $R'\in (R/2,R-1)$ we arrive at
\begin{equation}\label{1.26}
	r^2 \sup_{B_r} |\nabla^2 u_\hom|^2 \lesssim \left(\frac{r}{R'}\right)^2 \fint_{B_{R'}} |\nabla u_\hom|^2 \lesssim  \left(\frac{r}{R}\right)^2 \fint_{B_{R}} \chi_\M |\nabla  u|^2.
\end{equation}
Applying \eqref{1.24} in the form 
\begin{equation*}
	\rho^2 |\nabla^2 u_\hom(x)|^2 + |\nabla u_\hom(x)|^2 \lesssim \fint_{B_{\rho}(x)}   |\nabla u_\hom|^2, \quad x \in B_{R'-\rho},
\end{equation*}
we have for all $x \in B_{R'-\rho}$
\begin{multline}
	|\nabla^2 u_\hom|^2 (x)+ \rho^{-2}|\nabla u_\hom|^2 (x)  \lesssim \frac{1}{\rho^{2}} \fint_{B_{\rho}(x)} |\nabla u_\hom|^2 
	\\ \leq \frac{1}{\rho^{2}} \left(\frac{R'}{\rho}\right)^d\fint_{B_{R'}} |\nabla u_\hom|^2 
	\lesssim  \frac{1}{\rho^{2}} \left(\frac{R}{\rho}\right)^d \fint_{B_{R}} \chi_\M|\nabla u|^2.
\end{multline}
Combining \eqref{1.25} with \eqref{1.7}
\begin{equation}\label{1.28}
	\int_{B_{R'}\setminus B_{R'-2\rho}} |\nabla u_\hom|^2  \lesssim \rho \int_{\partial B_{R'}} |\nabla^{tan} u_\hom|^2 
	\lesssim \frac{\rho}{R}  \int_{B_{R}} \chi_\M |\nabla  u|^2.
\end{equation}
Now applying \eqref{1.8a} and \eqref{1.26}-\eqref{1.28} to the right hand side of \eqref{1.21}
we arrive at
\begin{multline}
	\fint_{B_{r}} \chi_\M |\nabla u - (\xi + \nabla \phi_\xi)|^2 
	\lesssim\frac{r^2}{R^2} 	\fint_{B_{r}} \chi_\M |I+\nabla \phi|^2 \fint_{B_{R}} \chi_\M |\nabla  u|^2 
	\\
	+	 \frac{R^d}{r^d} \left( \left(\frac{R}{\rho}\right)^{d+2} \frac{1}{R^2}\fint_{B_{R}} |(\chi_\M \phi ,\sigma)|^2 + \frac{\rho}{R} \right) \fint_{B_{R}} \chi_\M |\nabla  u|^2.
\end{multline}

Applying Caccioppoli's estimate \eqref{1.6} to the function $x_i + \phi_i - c_i$ (observe that one can choose $c_i=\fint_{B_{2r}\cap \M}\phi_i$), we have
\begin{equation}\label{3.58}
	\fint_{B_r} \chi_\M  |e_i + \nabla \phi_i |^2 \lesssim \frac{1}{r^2} \fint_{B_{2r}} \chi_\M  |x_i + \phi_i - c_i|^2 \lesssim 1+  \frac{1}{r^2} \fint_{B_{2r}}  | \phi_i^\ext - \fint_{B_{2r}} \phi_i^\ext|^2.
\end{equation}

So we arrive at
\begin{multline}
	\fint_{B_{r}} \chi_\M |\nabla u - (\xi + \nabla \phi_\xi)|^2 
	\leq C \left( (1+\delta) \frac{r^2}{R^2} +  \frac{R^d}{r^d}\left( \delta\left(\frac{R}{\rho} \right)^{d+2}     +\frac{\rho}{R}\right)  	\right) \fint_{B_{R}}\chi_\M|\nabla  u|^2.
\end{multline}

Let us prove \eqref{proposition 1 equation 4}. The upper bound is a consequence of  \eqref{3.58}. Thus, it only remains to show the lower bound. Let $\eta $ be a  cut-off function   between $B_{r-1-\tau}$ and $B_{r-1}$ with $0<\frac{\tau}{r}$ sufficiently small. Applying the Extension Theorem to $(\xi+\nabla\phi_\xi)|_\M$ (recall that our extension preserves affine functions) and Jensen's inequality in the form $(\int \eta)^{-1} \int \eta f^2 \geq ((\int \eta)^{-1} \int \eta f)^2$  we have
\begin{multline}\label{3.60}
	\int_{B_r} \chi_\M |\xi+\nabla\phi_\xi|^2 
	\ge 
	C\int_{B_{r-1}} \eta|\xi+\nabla\phi_\xi^\ext|^2
\ge
	C\left( \int  \eta \right) \left|\xi+\left(\int \eta\right)^{-1}\int_{B_{r-1}} \eta \nabla\phi_\xi^\ext \right|^2
	\\
	=
C	\left(\int  \eta\right)\left|\xi-\left(\int   \eta\right)^{-1}\int_{B_{r-1}} \nabla\eta\, (\phi_\xi^\ext-\fint_{B_{r-1}} \phi_\xi^\ext)\right|^2.
\end{multline}
Further, one can easily check that 
\begin{equation}\label{3.61}
	\int  \eta \geq C(d) r^d(1 - C(d) \frac{\tau}{r}).
\end{equation}
Then we have
\begin{multline}\label{3.62}
	\left(\int   \eta\right)^{-1} \left|\int_{B_{r-1}} \nabla\eta \left(\phi_\xi^\ext-\fint_{B_{r-1}} \phi_\xi^\ext\right)\right|\leq C(d) \frac{1}{\tau r^d} \int_{B_{r-1}} \left|   \phi_\xi^\ext-\fint_{B_{r-1}}   \phi_\xi^\ext\right| 
	\\
	\leq  C(d) \frac{1}{\tau } \left( \fint_{B_{r-1}} \left|   \phi_\xi^\ext-\fint_{B_{r-1}}   \phi_\xi^\ext\right|^2\right)^{-1/2}	\leq  C(d) \frac{r}{\tau } \delta^{1/2} |\xi|.
\end{multline}
Choosing $\tau$ such that  $\frac{\tau}{r} = \delta^{1/4}$ and combining \eqref{3.60}--\eqref{3.62} we arrive at \eqref{proposition 1 equation 4}.
\end{proof}
\bbb

The next two statements --- Proposition~\ref{theorem 1} and Proposition~\ref{corollary 3} --- establish large-scale Schauder estimates for solutions of degenerate elliptic PDEs in divergence form, which will later be needed in the proof of Proposition~\ref{proposition 2}. Furthermore, Proposition~\ref{theorem 1} establishes growth properties (sublinearlty) of the corrector on scales bigger than the regularity radius $r_*$ in the sense of \eqref{theorem 1 equation 3}.

\begin{proposition}
\label{theorem 1}
For any $\alpha\in(0,1)$ there exists constants $C_1(d,a_\pm,\alpha)$, $C_2(d,a_\pm,\alpha)$  with the following properties. Let $r_*$ be defined as in \eqref{definition r_*} with $\mathcal{C}=C_1(d,a_\pm,\alpha)$. If 
\begin{equation}
\label{theorem 1 equation 1}
-\nabla \cdot a \nabla u=0 \qquad \text{in} \ \tilde B_{R}
\end{equation}
for some $R\ge r_*$, where $\tilde B_R$ is defined as in Proposition~\ref{proposition 1}, then we have
\begin{equation}
\label{theorem 1 equation 2}
\mathrm{Exc}(\nabla u;B_r)\le C_2(d,a_\pm, \alpha) \left(\frac{r}{R}\right)^\alpha \mathrm{Exc}(\nabla u; B_R) \qquad \forall r\in[r_*,R]
\end{equation}
and
\begin{equation}
\label{theorem 1 equation 3}
\frac1C|\xi|^2 \le \fint_{B_r} \chi_\M |\xi+\nabla\phi_\xi|^2\le C_2(d,a_\pm,\alpha)|\xi|^2 \qquad \forall r\ge r_*, \forall\xi\in \R^d\,.
\end{equation}

Furthermore, fixing $\alpha>0$ (say, $\alpha=\frac12$), we have the mean value property
\begin{equation}
\label{theorem 1 equation 4}
\fint_{B_r}\chi_\M |\nabla u|^2 \le C(d,a_\pm) \fint_{B_R} \chi_\M|\nabla u|^2 \qquad \forall r\in[ r_*,R].
\end{equation}
\end{proposition}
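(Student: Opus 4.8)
The strategy is the classical Campanato/excess-decay iteration: use Proposition~\ref{proposition 1} as a one-step ``harmonic approximation'' bound, and iterate it over dyadic scales while controlling the error term $\delta$ via the definition of $r_*$. First I would fix $\alpha\in(0,1)$ and choose, on the basis of the constant $C(d,a_\pm)$ appearing in \eqref{proposition 1 equation 2}, a scale ratio $\theta\in(0,1/8)$ small enough that $C(d,a_\pm)\,\theta^2\le\frac12\theta^{2\alpha}$ (this is where the ``leading'' term $(1+\delta)r^2/R^2$ becomes a genuine contraction with a margin). Next I would pick $\rho$ in \eqref{proposition 1 equation 2} as a fixed fraction of $R$, say $\rho=\theta^{2d+4}R$, so that the ``tail'' term $\frac{R^d}{r^d}(\delta(R/\rho)^{d+2}+\rho/R)$ evaluated at $r=\theta R$ becomes $\theta^{-d}(\delta\,\theta^{-(2d+4)(d+2)}+\theta^{2d+4})$, which is $\le C\delta\,\theta^{-N}+\tfrac12\theta^{2\alpha}$ for a suitable exponent $N=N(d)$. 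Then, choosing $\mathcal{C}=C_1(d,a_\pm,\alpha)$ in the definition of $r_*$ large enough that $C\,\mathcal{C}^{-1}\theta^{-N}\le\frac12\theta^{2\alpha}$, and noting that $R\ge r_*$ forces $\delta\le\mathcal{C}^{-1}$ at every scale $r\in[r_*,R]$ (because in \eqref{proposition 1 equation 3} both averages are over balls of radius $\ge r_*$, so each is bounded by $\tfrac1{\mathcal{C}}$ times the respective radius squared, by \eqref{definition r_*}), one obtains the single-step decay
\begin{equation}
\mathrm{Exc}(\nabla u;B_{\theta R})\le \theta^{2\alpha}\,\mathrm{Exc}(\nabla u;B_R)+\theta^{2\alpha}\,\big(\mathrm{Exc}(\nabla u;B_R)\big)\,,
\end{equation}
where the second term comes from absorbing the residual $\delta$-contributions; more precisely one must be slightly careful that the right-hand side of \eqref{proposition 1 equation 2} involves $\fint_{B_R}\chi_\M|\nabla u|^2$ rather than $\mathrm{Exc}(\nabla u;B_R)$, so one first applies the proposition to $u$ minus the $a$-affine function $\xi\cdot x+\phi_\xi$ that realizes the excess at scale $R$, which is again $a$-harmonic, and uses \eqref{proposition 1 equation 4} / \eqref{3.58} to compare $\fint_{B_R}\chi_\M|\xi+\nabla\phi_\xi|^2$ with $|\xi|^2$.

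\textbf{Iteration and the mean-value property.} Once the one-step bound $\mathrm{Exc}(\nabla u;B_{\theta R})\le \tfrac12\theta^{2\alpha}\,C_2'\,\mathrm{Exc}(\nabla u;B_R)$ is established for all $R\ge r_*$ (with the constant absorbed so that the net factor is $\le\theta^\alpha$ after possibly shrinking $\theta$), I would iterate: for $r\in[r_*,R]$ pick $k$ with $\theta^{k+1}R<r\le\theta^k R$, apply the one-step estimate $k$ times starting from $B_R$, and sum the geometric error to get \eqref{theorem 1 equation 2} with $C_2(d,a_\pm,\alpha)$ depending on $\theta$ and hence on $\alpha$. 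For \eqref{theorem 1 equation 3}, I would feed $u=x_j+\phi_j$ (an $a$-harmonic function) into the machine: its excess at scale $R$ vanishes identically for the choice $\xi=\hat e_j$, so \eqref{proposition 1 equation 4} at any scale $r\ge r_*$, combined with the already-available two-sided bound $\tfrac1{C}(1-C\delta^{1/4})^3|\xi|^2\le\fint_{B_r}\chi_\M|\xi+\nabla\phi_\xi|^2\le C(1+\delta)|\xi|^2$ of \eqref{proposition 1 equation 4} and the smallness $\delta\le\mathcal{C}^{-1}$, gives \eqref{theorem 1 equation 3} after enlarging $\mathcal{C}$ so that $C\mathcal{C}^{-1/4}\le\frac12$. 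Finally, the mean-value property \eqref{theorem 1 equation 4}: write $\fint_{B_r}\chi_\M|\nabla u|^2\lesssim \mathrm{Exc}(\nabla u;B_r)+\fint_{B_r}\chi_\M|\xi_r+\nabla\phi_{\xi_r}|^2$ where $\xi_r$ realizes the excess at scale $r$; bound the excess by \eqref{theorem 1 equation 2} times $\mathrm{Exc}(\nabla u;B_R)\lesssim\fint_{B_R}\chi_\M|\nabla u|^2$, bound the corrector term by $C|\xi_r|^2$ via \eqref{theorem 1 equation 3}, and control $|\xi_r|^2$ by $|\xi_R|^2+\mathrm{Exc}$-differences telescoped across scales — again using \eqref{theorem 1 equation 3} to pass between $|\xi|^2$ and $\fint\chi_\M|\xi+\nabla\phi_\xi|^2$, and \eqref{theorem 1 equation 2} to sum the telescoping series, which converges because $\alpha>0$.

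\textbf{Main obstacle.} The delicate point is the bookkeeping of constants in the choice of $\theta$, $\rho$ and $\mathcal{C}=C_1$: one needs $\mathcal{C}$ to depend only on $d,a_\pm,\alpha$ and not circularly on quantities that themselves depend on $r_*$, so the order of quantifiers matters — $\theta$ is fixed first from $C(d,a_\pm)$ and $\alpha$, then $\rho/R$ is fixed as a power of $\theta$, and only then is $\mathcal{C}$ chosen large. A secondary subtlety, genuinely new compared with the uniformly elliptic case of~\cite{mjm}, is that all averages carry the factor $\chi_\M$ and the corrector enters through its $\M$-extension $\phi^\ext$ in \eqref{definition r_*} versus $\chi_\M\phi$ in \eqref{proposition 1 equation 3}; reconciling these requires the extension estimates of Theorem~\ref{theorem extension} together with the Caccioppoli/hole-filling bounds already used in the proof of Proposition~\ref{proposition 1}, so I would invoke those to show $\frac1{r^2}\fint_{B_{2r}}|\chi_\M\phi-\fint\chi_\M\phi|^2\lesssim\frac1{r^2}\fint_{B_{2r}}|\phi^\ext-\fint\phi^\ext|^2$ up to harmless constants, ensuring the $\delta$ appearing in Proposition~\ref{proposition 1} is indeed controlled by $\mathcal{C}^{-1}$ once $R\ge r_*$.
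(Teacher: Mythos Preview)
Your proposal is correct and follows essentially the same Campanato iteration as the paper: apply Proposition~\ref{proposition 1} to $u-(\xi\cdot x+\phi_\xi)$ to turn the right-hand side into the excess, choose $\theta$ from $\alpha$ and the constant $C(d,a_\pm)$, then $\mathcal{C}$ large enough to kill the $\delta$-term, iterate, and derive \eqref{theorem 1 equation 3} and \eqref{theorem 1 equation 4} from \eqref{proposition 1 equation 4} and a telescoping argument respectively; you also correctly flag the $\chi_\M\phi$ vs.\ $\phi^\ext$ reconciliation. The only cosmetic difference is that the paper picks $\rho=\delta^{1/(d+3)}R$ to exactly balance the two tail contributions (yielding $\delta(R/\rho)^{d+2}+\rho/R=2\delta^{1/(d+3)}$), whereas you fix $\rho=\theta^{2d+4}R$ --- both choices work, and in either case one must separately note that for $R$ below the resulting fixed threshold (where $\rho>1$ may fail) the estimate \eqref{theorem 1 equation 2} holds trivially by adjusting $C_2$.
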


\begin{proof}

Fix $\alpha\in(0,1)$ and let $\delta=\delta(d,a_\pm,\alpha)\in(0,1)$ be a sufficiently small parameter to be chosen later. 

First note that for $\rho >1$ one has
\begin{equation}
	\label{proof proposition 1 equation 3}
	\fint_{B_{\rho}}\chi_\M \left| \phi-\fint_{B_{\rho}\cap \M}\phi\right|^2\le C \fint_{B_{\rho}} \left| \phi^\ext-\fint_{B_{\rho}}\phi^\ext\right|^2\,.
\end{equation}
Indeed, this bound is a consequence of the trivial  estimate
$\fint_{B_{\rho}}\chi_\M \left| \phi-c\right|^2\le C \fint_{B_{\rho}} \left| \phi^\ext-c\right|^2$
after one chooses
$c=\fint_{B_{\rho}}\phi^\ext$ in the right-hand side and optimises for $c$ in the left-hand side. Choosing sufficiently large $\mathcal C$ in \eqref{definition r_*} we guarantee that 
\begin{equation}
	\frac{1}{R'^2} \fint_{B_R'}  \left|(\phi^\ext,\sigma) - \fint_{B_R'} (\phi^\ext,\sigma)\right|^2 \le \delta \qquad \forall R'\ge r_*.
\end{equation}
Then by Proposition \ref{proposition 1} one has
\begin{multline}
\label{proof proposition 1 equation 2}
	\fint_{B_{r}} \chi_\M |\nabla u - (\xi + \nabla \phi_\xi)|^2 
\\
\leq C \left((1+\delta) \frac{r^2}{R'^2} + \frac{R'^d}{r^d}\left( \delta\left(\frac{R'}{\rho} \right)^{d+2}     +\frac{\rho}{R'}\right)  \right) \fint_{B_{R'}}\chi_\M|\nabla  u|^2
\end{multline}
for all $r_*\le r\le R'\le R$, where the constant only depends on $d$ and $a_\pm$.

Upon replacing $u\mapsto u -(\xi\cdot x+\phi_\xi)$ (which is also $a$-harmonic) and optimising w.r.t $\xi$ one obtains from \eqref{proof proposition 1 equation 2} that
\begin{equation}
\label{proof proposition 1 equation 4}
	\mathrm{Exc}(\nabla u;B_r)
\leq C\left((1+\delta) \frac{r^2}{R'^2} + \frac{R'^d}{r^d}\left( \delta\left(\frac{R'}{\rho} \right)^{d+2}     +\frac{\rho}{R'}\right)  \right)\mathrm{Exc}(\nabla u;B_{R'})\,.
\end{equation}
Let $\theta >0$ so that $2C\theta^2 \le \frac13 \theta^{2\alpha}$.
Choose $\delta$ so that $C\delta^{\frac{1}{d+3}}\theta^{-d}\le \frac13 \theta^{2\alpha}$. Denote $R^*(\delta) := \delta^{-\frac{1}{d+3}}$, and assume that $R' \ge R^*(\delta)$. Then set
\begin{equation}
	\rho=  \delta^{\frac{1}{d+3}} R' \ge 1.
\end{equation}
Notice that for such $\rho$ one has
\begin{equation}
 \delta\left(\frac{R'}{\rho} \right)^{d+2}     +\frac{\rho}{R'}=  2 \delta^{\frac{1}{d+3}}.
\end{equation}
Now choose $r$ so that $\theta=\frac{r}{R'}$. Under these assumptions  \eqref{proof proposition 1 equation 4} takes the form
\begin{equation}
\label{proof proposition 1 equation 5}
	\mathrm{Exc}(\nabla u;B_{\theta R'})
\leq \theta^{2\alpha}\, \mathrm{Exc}(\nabla u;B_{R'})\, \qquad \forall R' \ge R^*(\delta), \ \theta R'\ge r_*.
\end{equation}
The estimate \eqref{proof proposition 1 equation 5} can be iterated to obtain
\begin{equation}
\label{proof proposition 1 equation 6}
	\mathrm{Exc}(\nabla u;B_{\theta^n R'})
\leq \theta^{2n\alpha}\, \mathrm{Exc}(\nabla u;B_{R'})\, \qquad \forall n\in\mathbb{N}\ \text{s.t.} \ \theta^{n} R'\ge \max\{R^*(\delta), r_*\}.
\end{equation}
For $\max\{R^*(\delta), r_*\}\le r\le R$, let $ n\in \mathbb{N}\cup \{0\}$ be such that $\theta^{n+1}<\frac{r}{R}\le \theta^n$. Observe that this implies $\theta^n \frac{R}{r}<\frac1\theta$. By combining the latter inequality with the trivial estimate $\fint_{B_r}\lesssim \left(\frac{\theta^n R}{r}\right)^d \fint_{B_{\theta^n R}}$ and using \eqref{proof proposition 1 equation 6} with $R'\mapsto R$, we arrive at
\begin{multline}
\label{proof proposition 1 equation 7}
	\mathrm{Exc}(\nabla u;B_{r})
\leq \theta^{-d}\, \mathrm{Exc}(\nabla u;B_{\theta^n R})\le \theta^{-d} (\theta^n)^{2\alpha}\, \mathrm{Exc}(\nabla u;B_{R})\\
\le 
\theta^{-d-2\alpha} \left(\frac{r}{R}\right)^{2\alpha}\, \mathrm{Exc}(\nabla u;B_{R})\,,
\end{multline}
that is, \eqref{theorem 1 equation 2}. Notice that for $R\leq R^*(\delta)$ the bound \eqref{theorem 1 equation 2} is trivial up to choosing sufficiently large constant $C(d,a_\pm, \alpha)$.

The bound \eqref{theorem 1 equation 3} follows from \eqref{proposition 1 equation 4}, upon choosing the constant $\mathcal{C}$ in the definition of $r_*$ sufficiently large.

We complete the proof with the argument for \eqref{theorem 1 equation 4}.
For $r_*\le r \le R$, let $\xi_r\in \R^d$ be the vector defined by the relation
\begin{equation}
\mathrm{Exc}(\nabla u;B_r)=\fint_{B_r} \chi_\M |\nabla u -(\xi_r+\nabla\phi_{\xi_r})|^2
\end{equation}
whose existence is guaranteed by the nondegeneracy condition \eqref{theorem 1 equation 3}. The first step towards proving \eqref{theorem 1 equation 4} is to show that
\begin{equation}
\label{proof proposition 1 equation 8}
|\xi_\rho-\xi_{R'}|^2 \lesssim \mathrm{Exc}(\nabla u;B_{R'}) \qquad \text{for}\ \rho\le R'\le 2\rho, \quad r_*\le \rho\le R'\le R.
\end{equation}
By \eqref{theorem 1 equation 3} we have
\begin{multline}
\label{proof proposition 1 equation 9}
|\xi_\rho-\xi_{R'}|^2 \lesssim \fint_{B_\rho} \chi_\M|(\xi_\rho-\xi_{R'}+\nabla \phi_{\xi_\rho-\xi_{R'}})|^2
\\
=
\fint_{B_\rho} \chi_\M|(\xi_\rho+\nabla \phi_{\xi_\rho})-\nabla u+\nabla u-(\xi_{R'}+\nabla \phi_{\xi_{R'}})|^2
\\
\lesssim
\fint_{B_\rho} \chi_\M|\nabla u-(\xi_\rho+\nabla \phi_{\xi_\rho})|^2+\fint_{B_{R'}} \chi_\M|\nabla u-(\xi_{R'}+\nabla \phi_{\xi_{R'}})|^2
\\
=
\mathrm{Exc}(\nabla u;B_\rho)+ \mathrm{Exc}(\nabla u;B_{R'}) \,.
\end{multline}
Note that in the second-to-last step we used the fact that $\rho\sim R'$ so that $\fint_{B_\rho}\lesssim \fint_{B_{R'}}$. But \eqref{proof proposition 1 equation 9} implies \eqref{proof proposition 1 equation 8}.

Next we use a dyadic argument. For $N\in \N$ such that $2^{-(N+1)} R' < r \le 2^{-N} R'$ by  \eqref{proof proposition 1 equation 8} we have 
\begin{equation*}
	|\xi_r - \xi_{2^{-N}R'}|^2 \lesssim \mathrm{Exc}(\nabla u;B_{2^{-N} R'}), \quad  |\xi_{2^{-(n+1)}R'} - \xi_{2^{-n}R'}|^2 \lesssim \mathrm{Exc}(\nabla u;B_{2^{-n} R'}), \, n=0,1,\dots,N-1.
\end{equation*} 
Then from \eqref{theorem 1 equation 2} it follows that 
\begin{multline}\label{3.15}
	|\xi_r - \xi_{R'}|^2 \lesssim \left( \sum_{n=0}^N \sqrt{ \mathrm{Exc}(\nabla u;B_{2^{-n} R'})}\right)^2
	\\
	\lesssim \left( \sum_{n=0}^N2^{-n\alpha} \sqrt{ \mathrm{Exc}(\nabla u;B_{ R'})}\right)^2 \lesssim \mathrm{Exc}(\nabla u;B_{ R'})
\end{multline}  

Now the bound \eqref{theorem 1 equation 4} follows by applying the triangle inequality and \eqref{theorem 1 equation 3}, \eqref{theorem 1 equation 2} and \eqref{3.15}:
\begin{multline}
\label{proof proposition 1 equation 10}
\fint_{B_r}\chi_\M |\nabla u|^2  \le \mathrm{Exc}(\nabla u, B_r)+\fint_{B_r}\chi_\M |\xi_r+\nabla \phi_{\xi_r}|^2 
 \lesssim\mathrm{Exc}(\nabla u, B_r)+|\xi_r|^2
\\
 \le
\mathrm{Exc}(\nabla u, B_r)+|\xi_r-\xi_R|^2+|\xi_R|^2
 \lesssim
\mathrm{Exc}(\nabla u, B_R)+|\xi_R|^2
\lesssim \fint_{B_R}\chi_\M |\nabla u|^2\,.
\end{multline}
In the last step we used the chain of inequalities
$|\xi_R|^2 \lesssim  \fint_{B_R}\chi_\M|\nabla \phi_{\xi_R}+\xi_R|^2\le \mathrm{Exc}(\nabla u, B_R)+\fint_{B_R}\chi_\M |\nabla u|^2$ and the definition of excess, which is defined as an infimum.
\end{proof}

Next, we prove an analogue of Proposition~\ref{theorem 1} for degenerate elliptic equations with nonzero source term.

\begin{proposition}
\label{corollary 3}
Given $\alpha\in(0,1)$, let $r_*$ and $\tilde B_R$ be as in Proposition~\ref{theorem 1}.
Suppose that $u\in H^1_\loc(\R^d)$ and $g\in (L^2_\loc(\R^d))^d$, $h\in L^2_\loc(\R^d)$ are related as 
\begin{equation}
\label{corollary 3 equation 1}
-\nabla \cdot a(\nabla u +g)= \chi_\M h \qquad \text{in}\ \tilde B_{R}\,.
\end{equation}
Then we have
\begin{multline}
\label{corollary 3 equation 2}
\sup_{r\in[r_*, R]} \frac{1}{r^{2\alpha}}\mathrm{Exc}(\nabla u+g;B_r)
\lesssim \frac{1}{R^{2\alpha}} \mathrm{Exc}(\nabla u+g;B_R)
\\
+\sup_{r\in[r_*, R]} \frac{1}{r^{2\alpha}} \fint_{B_r} \chi_\M\left(  \left|g-\fint_{B_r\cap \M} g \right|^2 + r^2|h|^2\right)\,.
\end{multline}
Furthermore, we have
\begin{multline}
\label{corollary 3 equation 3}
\sup_{r\in[r_*, R]} \fint_{B_r}\chi_\M|\nabla u+g|^2
\lesssim \fint_{B_R}\chi_\M|\nabla u+g|^2
\\
+\sup_{r\in[r_*, R]} \left(\frac{R}{r}\right)^{2\alpha} \fint_{B_r} \chi_\M\left(  \left|g-\fint_{B_r\cap \M} g \right|^2 + r^2|h|^2\right)\,.
\end{multline}
Here $\lesssim$ means $\le C(d,a_\pm,\alpha)$.
\end{proposition}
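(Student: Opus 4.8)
The plan is to reduce the inhomogeneous problem \eqref{corollary 3 equation 1} to the homogeneous setting covered by Proposition~\ref{theorem 1} by absorbing the source terms into a modified flux. Write \eqref{corollary 3 equation 1} as $-\nabla\cdot\bigl(a(\nabla u+g)-\chi_\M h\bigr)=0$ in $B_{R+1}$. The key observation is that $a$-harmonicity is not quite what we have, but we can still run the excess-decay machinery if we track the defect. Concretely, for each dyadic scale $r$ I would compare $u$ on $B_r$ with the genuine $a$-harmonic function $w_r$ solving $-\nabla\cdot a\nabla w_r=0$ in $B_r$ with $w_r=u$ on $\partial B_r$ (in the appropriate perforated/extended sense, mirroring the constructions in the proof of Proposition~\ref{proposition 1}). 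The difference $u-w_r$ solves a degenerate equation with right-hand side $\nabla\cdot\bigl(\chi_\M(h-\text{const})-a(g-\text{const})\bigr)$ — where the constants are the relevant local averages over $B_r$, chosen because the divergence annihilates constants — so by a Caccioppoli-type energy estimate (exactly the one established in the proof of Proposition~\ref{proposition 1}, see \eqref{1.6}, combined with the extension theorem) one controls
\[
\fint_{B_{r}}\chi_\M|\nabla u-\nabla w_r|^2 \lesssim \fint_{B_{2r}}\chi_\M\Bigl(\bigl|g-\textstyle\fint_{B_{2r}}\chi_\M g\bigr|^2+\bigl|h-\textstyle\fint_{B_{2r}}\chi_\M h\bigr|^2\Bigr).
\]

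With this in hand, the argument becomes a perturbation of the iteration in the proof of Proposition~\ref{theorem 1}. For $r_*\le \rho\le R'$ with $\rho\le R'\le 2\rho$ (say), I would apply \eqref{theorem 1 equation 2} to $w_{R'}$, use the triangle inequality $\mathrm{Exc}(\nabla u+g;B_\rho)\lesssim \mathrm{Exc}(\nabla w_{R'};B_\rho)+\fint_{B_\rho}\chi_\M|\nabla u-\nabla w_{R'}|^2+\fint_{B_\rho}\chi_\M|g-\fint\chi_\M g|^2$ together with the comparison estimate, and obtain a one-step excess-decay inequality of the schematic form
\[
\frac{1}{\rho^{2\alpha}}\mathrm{Exc}(\nabla u+g;B_\rho)\le \tfrac12\,\frac{1}{R'^{2\alpha}}\mathrm{Exc}(\nabla u+g;B_{R'})+\frac{C}{R'^{2\alpha}}\fint_{B_{R'}}\chi_\M\Bigl(\bigl|g-\textstyle\fint\chi_\M g\bigr|^2+\bigl|h-\textstyle\fint\chi_\M h\bigr|^2\Bigr)
\]
after choosing the ratio $\rho/R'$ a small fixed dyadic fraction so that the contraction factor coming from \eqref{theorem 1 equation 2} beats the geometric loss $(\rho/R')^{-2\alpha}$, exactly as $\theta$ was chosen in Proposition~\ref{theorem 1}. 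A standard iteration/summation over dyadic scales between $r$ and $R$ (a discrete Gronwall or geometric-series argument, using that $\sum 2^{-n\alpha}$ converges) then telescopes these into \eqref{corollary 3 equation 2}, with the supremum on the right absorbing all intermediate-scale contributions of the $g,h$-oscillation terms. One technical point to handle carefully: the source-oscillation term at scale $\rho$ is naturally measured over $B_{2\rho}$, not $B_\rho$, so I would state the dyadic step with an overlapping-balls bookkeeping and note that $\fint_{B_{2\rho}}\lesssim\fint_{B_\rho}$ up to the dimensional constant to keep the supremum on the stated ball family; this is why the hypothesis is posed on $B_{R+1}$ rather than $B_R$.

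For \eqref{corollary 3 equation 3} I would deduce it from \eqref{corollary 3 equation 2} by the same device used at the end of the proof of Proposition~\ref{theorem 1}: introduce the minimizing vectors $\xi_r$ realizing $\mathrm{Exc}(\nabla u+g;B_r)$ (their existence again via the nondegeneracy \eqref{theorem 1 equation 3}), control $|\xi_\rho-\xi_{R'}|^2$ by the excess at the larger scale plus the source-oscillation term using \eqref{theorem 1 equation 3} and the comparison estimate, run the dyadic summation $|\xi_r-\xi_R|^2\lesssim\bigl(\sum 2^{-n\alpha}\sqrt{\cdots}\bigr)^2$ as in \eqref{3.15}, and finally bound $\fint_{B_r}\chi_\M|\nabla u+g|^2\le \mathrm{Exc}(\nabla u+g;B_r)+\fint_{B_r}\chi_\M|\xi_r+\nabla\phi_{\xi_r}|^2\lesssim \mathrm{Exc}(\nabla u+g;B_r)+|\xi_r|^2$ via \eqref{theorem 1 equation 3}, then split $|\xi_r|^2\le |\xi_r-\xi_R|^2+|\xi_R|^2$ and close the estimate with $|\xi_R|^2\lesssim\mathrm{Exc}(\nabla u+g;B_R)+\fint_{B_R}\chi_\M|\nabla u+g|^2$. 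The factor $(R/r)^{2\alpha}$ in \eqref{corollary 3 equation 3} appears precisely because one converts the scale-weighted supremum from \eqref{corollary 3 equation 2} back to an unweighted one.

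I expect the main obstacle to be the comparison estimate between $u$ and the $a$-harmonic replacement $w_r$ in the \emph{perforated} setting: one must define $w_r$ so that the difference $u-w_r$ lies in $H^1_0$ of the ball while correctly handling the inclusions that straddle $\partial B_r$, and then run the Caccioppoli estimate with the extension operator of Theorem~\ref{theorem extension} to pass between $\int_{B_r}\chi_\M|\nabla(\cdot)|^2$ and $\int_{B_r}|\nabla(\cdot)^\ext|^2$ — essentially reprising, in a localized form, the delicate boundary-layer construction of $v$ in the proof of Proposition~\ref{proposition 1}. Once that energy comparison is set up cleanly, the rest is the by-now standard Campanato-type iteration and should go through with only dimensional bookkeeping.
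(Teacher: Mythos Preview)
Your strategy is essentially the paper's: compare to an $a$-harmonic function, invoke the excess decay of Proposition~\ref{theorem 1}, and iterate. The paper implements the comparison dually to yours: rather than solving for the $a$-harmonic replacement $w_r$ with $u$'s boundary data, it defines a defect $w$ with \emph{zero} boundary data on $\partial\tilde B_\rho$ (the ball with boundary-straddling holes excised) absorbing the sources, so that $u+\zeta\cdot x-w$ with $\zeta=\fint_{B_\rho}\chi_\M g$ is exactly $a$-harmonic. Either setup works in principle, and the second half of your argument---the $\xi_r$ bookkeeping for \eqref{corollary 3 equation 3}---matches the paper's.

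Two points need repair, however. First, the justification ``the divergence annihilates constants'' for subtracting $\fint\chi_\M g$ from $g$ in the energy estimate is wrong in this setting: $\nabla\cdot(a\,\bar g)\neq 0$ because $a$ is non-constant, and $\nabla\cdot(\chi_\M\bar h)\neq 0$ because $\chi_\M$ jumps across hole boundaries. The paper handles the $g$-mean not by a divergence identity but by pushing it onto the \emph{solution side} via the linear function $\zeta\cdot x$: since $\nabla(\zeta\cdot x)=\zeta$, the combination $u+\zeta\cdot x-w$ is $a$-harmonic and the residual source in the equation for $w$ is $a(g-\zeta)+\chi_\M h$, which gives the oscillation of $g$ in the energy estimate for $w$ directly. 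Your harmonic-replacement formulation can be made to do the same if you take boundary data $u+\zeta\cdot x$ rather than $u$, but as written the comparison estimate you claim does not follow.

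Second, your one-step estimate cannot produce the contraction factor $\tfrac12$ as stated. You apply \eqref{theorem 1 equation 2} with the \emph{same} exponent $\alpha$ as the target, so after dividing by $\rho^{2\alpha}$ the leading excess term carries only a bounded constant, not a small factor---unlike the situation in the proof of Proposition~\ref{theorem 1}, where the raw decay exponent from Proposition~\ref{proposition 1} was $2$ and the target $2\alpha<2$, leaving a gap to exploit when choosing $\theta$. The paper closes this by fixing $\alpha':=(1+\alpha)/2>\alpha$, letting $r_*$ correspond to $\alpha'$, and applying \eqref{theorem 1 equation 2} with exponent $2\alpha'$; the surplus $2(\alpha'-\alpha)>0$ is precisely what buys the contraction and makes the Campanato iteration sum. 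Without this adjustment your geometric-series argument stalls at a constant and \eqref{corollary 3 equation 2} does not follow.
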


\begin{proof}

Let $\alpha\in (0,1)$ and put $\alpha':=\frac{\alpha+1}{2}\in (\alpha,1)$. Let $r_*$ be the radius corresponding to the H\"older exponent $\alpha'$.
Let $r_*\le r\le \rho\le R$. We claim that
\begin{multline}
\label{proof corollary 3 equation 1}
\mathrm{Exc}(\nabla u+g;B_r)\le C_1(d,a_\pm,\alpha) \left[ \left(\frac{r}{\rho}\right)^{2\alpha'}\mathrm{Exc}(\nabla u+g;B_\rho)\right.
\\
\left.
+\left(\frac{\rho}{r} \right)^d \fint_{B_\rho} \chi_\M \left(  \left|g-\fint_{B_\rho\cap \M} g \right|^2 + \rho^2|h|^2\right)
\right].
\end{multline}
In order to show this, put $\zeta:=\fint_{B_{\rho}\cap \M} g$ and let $w$ be the solution of
\begin{equation}
\label{proof corollary 3 equation 2}
-\nabla \cdot a \nabla w= \nabla \cdot (a(g-\zeta))+\chi_\M h \quad \text{in}\ \tilde B_{\rho-1}, \quad w=0\quad \text{on}\ \partial \tilde B_{\rho-1},
\end{equation}
where $\tilde{B}_{\rho-1}$ is defined in the same way as $\tilde B_R$ from the statement of the proposition. It is not hard to see that \eqref{proof corollary 3 equation 2} implies that the function $x\mapsto u+\zeta\cdot x-w$ is $a$-harmonic in $\tilde B_{\rho}$. Hence, by Proposition~\ref{theorem 1} we have
\begin{equation}
\label{proof corollary 3 equation 3}
\mathrm{Exc}(\nabla u+\zeta-\nabla w;B_r)\lesssim \left(\frac{r}{\rho}\right)^{2\alpha'} \mathrm{Exc}(\nabla u+\zeta-\nabla w; B_{\rho-1})\,.
\end{equation}
Furthermore, an energy estimate for \eqref{proof corollary 3 equation 2} gives us\footnote{Here the argument goes as follows: test \eqref{proof corollary 3 equation 2} against $w^\ext$ in $\tilde B_{\rho-1}$, extend $w^\ext$ by zero in $B_{\rho}\setminus \tilde B_{\rho-1}$, and finally use the Poincar\'e inequality in $B_\rho$.}
\begin{equation}
\label{proof corollary 3 equation 4}
\fint_{B_{\rho}} \chi_\M |\nabla w|^2
\lesssim \fint_{B_{\rho}} \chi_\M \left(  \left|g-\fint_{B_\rho\cap \M} g \right|^2 + \rho^2|h|^2\right).
\end{equation}

Now, adding and subtracting $\zeta-\nabla w$, using triangle inequality  and the bounds \eqref{proof corollary 3 equation 3} and \eqref{proof corollary 3 equation 4} yields
\begin{multline}
\label{proof corollary 3 equation 5}
\mathrm{Exc}(\nabla u +g;B_r)=\inf_{\xi\in \R^d} \fint_{B_r} \chi_\M |\nabla u+g-(\xi+\nabla \phi_\xi)|^2
\\
\lesssim
\mathrm{Exc}(\nabla u +\zeta-\nabla w;B_r)+ \fint_{B_r}\chi_\M |g-\zeta|^2+\fint_{B_r}\chi_\M |\nabla w|^2
\\
\lesssim
\left(\frac{r}{\rho}\right)^{2\alpha'}\left[ \mathrm{Exc}(\nabla u +g;B_\rho)+ \fint_{B_\rho}\chi_\M(|g-\zeta|^2+|\nabla w|^2)\right]
\\
+\left(\frac{\rho}{r} \right)^d\fint_{B_{\rho}} \chi_\M \left(  \left|g-\fint_{B_\rho\cap \M} g \right|^2 + \rho^2|h|^2\right)\,.
\end{multline}
Utilising \eqref{proof corollary 3 equation 4} once again we arrive at \eqref{proof corollary 3 equation 1}.

Retracing the argument from~\cite[Proof of Corollary~3]{mjm} it is easy to see that \eqref{proof corollary 3 equation 1} implies \eqref{corollary 3 equation 2}.

Finally, let us prove \eqref{corollary 3 equation 3}.
First observe that \eqref{corollary 3 equation 2} yields the following: for all $r\ge  r_*$ one has
\begin{multline}
\label{proof corollary 3 equation 6}
\sup_{\rho\in[r,R]} \left(\frac{R}{\rho}\right)^{2\alpha}\mathrm{Exc}(\nabla u+g;B_\rho)
\lesssim  \mathrm{Exc}(\nabla u+g;B_R)
\\
+\sup_{\rho\in[r,R]} \left(\frac{R}{\rho}\right)^{2\alpha} \fint_{B_\rho} \chi_\M\left(  \left|g-\fint_{B_\rho\cap\M} g \right|^2 + \rho^2|h|^2\right)\,.
\end{multline}
Retracing the dyadic argument in the proof of Proposition~\ref{theorem 1} it is easy to see that
\begin{equation}
\label{proof corollary 3 equation 7}
|\xi_r-\xi_R|\lesssim \sup_{\rho\in[r,R]}\left(\frac{R}{\rho}\right)^{2\alpha}\mathrm{Exc}(\nabla u+g;B_\rho)\qquad \forall\alpha>0.
\end{equation}

Then using the triangle inequality and  the bounds \eqref{proof corollary 3 equation 7} and  \eqref{proof corollary 3 equation 6} we get
\begin{multline}
\label{proof corollary 3 equation 8}
|\xi_r|^2+\mathrm{Exc}(\nabla u+g;B_r)
 \lesssim
|\xi_R|^2+ \mathrm{Exc}(\nabla u+g;B_R)
\\
+\sup_{\rho\in[r,R]} \left(\frac{R}{r}\right)^{2\alpha} \fint_{B_\rho} \chi_\M\left(  \left|g-\fint_{B_\rho\cap\M} g \right|^2 + \rho^2|h|^2\right)\,.
\end{multline}
By combining \eqref{proof corollary 3 equation 8} with \eqref{proof proposition 1 equation 10}, replacing in the latter $\nabla u\mapsto \nabla u+g$, one arrives at \eqref{corollary 3 equation 3}.
\end{proof}

\bbb

Equipped with Proposition~\ref{corollary 3} we can now prove the following statement, where the massive corrector $\phi_T$ comes into the picture. By making a quantitative assumption on the growth of the massive corrector $\phi_T$ in terms of a ``new'' regularity radius $r_{**}$, we will quantify the sublinear growth of the (extended) corrector itself. Furthermore, we will show that $r_*$ is bounded above by $r_{**}$.

\begin{proposition}
\label{proposition 2}
Suppose that we have
\begin{equation}
\label{proposition 2 equation 1}
\fint_{B_R} \frac{1}{T}|(\phi_T^\ext,\sigma_T)|^2\le \left(\frac{r_{**}}{R}\right)^{2\nu}
\end{equation}
for some $\nu, r_{**}>0$ and all $R>r_{**}$, $R=2^k$, $T=R^2$.
There exists $C=C(d,a_\pm,\alpha,\nu)$
such that
\begin{equation}
\label{proposition 2 equation 2}
\frac{1}{R^2}\fint_{B_R} |(\phi^\ext,\sigma)-\fint_{B_R}(\phi^\ext,\sigma)|^2 \le C \left(\frac{r_{**}}{R}\right)^{2\nu} \qquad \text{for all}\quad R>r_{**}.
\end{equation}
Furthermore we have 
$$r_*\leq C\,r_{**},$$
possibly after enlarging $C$. 
\end{proposition}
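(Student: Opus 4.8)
\emph{Overall plan.} My plan rests on comparing the pair $(\phi,\sigma)$ with the massive pair $(\phi_T,\sigma_T)$ at matched scales $\sqrt T\sim R$, feeding in the large-scale regularity of Propositions~\ref{proposition 1},~\ref{theorem 1} and~\ref{corollary 3} together with the algebraic decay of the Hole Filling Lemma~\ref{lemma hole filling}. The mechanism is that $\phi_T$ is stationary and its dependence on $a$ is localised to scale $\sqrt T$, which is what later makes \eqref{proposition 2 equation 1} accessible via the Spectral Gap; Proposition~\ref{proposition 2} says that this stationary proxy carries the same large-scale information as $(\phi,\sigma)$ up to the radius $r_*$.

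\emph{From \eqref{proposition 2 equation 1} to \eqref{proposition 2 equation 2}, hence $r_*\lesssim r_{**}$.} Fix dyadic $R>r_{**}$ and set $T=R^2$. I would first split
\begin{multline*}
\frac1{R^2}\fint_{B_R}\Big|(\phi^\ext,\sigma)-{\textstyle\fint_{B_R}}(\phi^\ext,\sigma)\Big|^2
\lesssim
\frac1{R^2}\fint_{B_R}\big|(\phi_T^\ext,\sigma_T)\big|^2\\
+\frac1{R^2}\fint_{B_R}\Big|(\phi^\ext-\phi_T^\ext,\sigma-\sigma_T)-{\textstyle\fint_{B_R}}(\cdots)\Big|^2 ,
\end{multline*}
the first term being $\le(r_{**}/R)^{2\nu}$ by \eqref{proposition 2 equation 1}. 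For the second, I use that for $\tau<\tau'$ the increment $u:=\phi_{i,\tau'}-\phi_{i,\tau}$ solves the massive equation $\chi_\M{\tau'}^{-1}u-\nabla\cdot a\nabla u=\chi_\M(\tau^{-1}-{\tau'}^{-1})\phi_{i,\tau}$, while $\sigma_{\tau'}-\sigma_\tau$ solves a whole-space Poisson equation of type \eqref{flux corrector} with datum built from $q_{i,\tau'}-q_{i,\tau}=a\nabla u$. Writing $\phi^\ext-\phi_T^\ext$ and $\sigma-\sigma_T$ as telescoping sums of such increments over the dyadic sequence $\tau_j:=4^jT$, $j\ge0$, applying to each increment the excess decay and mean-value bounds of Proposition~\ref{corollary 3} (the required potential form of the source being obtained after a Poisson/Neumann correction in $\M$, where Hole Filling supplies the integrability of $\phi_\tau$), and invoking \eqref{proposition 2 equation 1} at scale $\sqrt{\tau_j}$, I would bound the second term by a geometric series $\sum_{j\ge0}C\,2^{-cj}(r_{**}/R)^{2\nu}$; its convergence is precisely where the algebraic gain in \eqref{corollary 3 equation 2}, ultimately of Hole Filling, is indispensable. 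Summing the two contributions yields \eqref{proposition 2 equation 2}, whose right-hand side drops below $1/\mathcal C$ once $R\ge(C\mathcal C)^{1/(2\nu)}r_{**}$, so by \eqref{definition r_*} one gets $r_*\le C\,r_{**}$.

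\emph{The bound $r_{**}\le C\,r_*$.} Conversely, I would show that the defining sublinearity of $r_*$ forces \eqref{proposition 2 equation 1} to hold with $r_{**}$ replaced by $C\,r_*$. Fix dyadic $R\ge r_*$, $T=R^2$, and compare the other way: $w_i:=\phi_{i,T}-\phi_i$ solves $\chi_\M T^{-1}w_i-\nabla\cdot a\nabla w_i=-\chi_\M T^{-1}\phi_i$. Testing with a cut-off of $w_i$, using on the right-hand side that $\fint_{B_R}\chi_\M|\phi_i-\fint_{B_R}\phi_i|^2\le R^2/\mathcal C$ for $R\ge r_*$ together with the large-scale mean-value/Lipschitz estimates \eqref{theorem 1 equation 3}--\eqref{theorem 1 equation 4} for $\nabla\phi_i+\mathbf{e}_i$, controls $T^{-1}\fint_{B_R}\chi_\M\phi_{i,T}^2$ by $C(r_*/R)^{2\nu}$ for a suitable $\nu$ tied to the Hole Filling exponent; the spatial mean of $\phi_{i,T}$ is absorbed using stationarity and $\E[\chi_\M\phi_{i,T}]=0$. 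The flux corrector $\sigma_T$ is then handled through its Poisson equation \eqref{flux corrector}, $q_{i,T}=a(\nabla\phi_{i,T}+\mathbf{e}_i)$ and \eqref{theorem 1 equation 3}. Taking the infimum over admissible $r_{**}$ yields $r_{**}\le C\,r_*$; combined with the previous step, $r_{**}\sim r_*$.

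\emph{Main obstacle.} The delicate points are: tracking the flux correctors $\sigma,\sigma_T$ in parallel with $\phi,\phi_T$, their growth being slaved to that of the fluxes $q,q_T$ through the whole-space Poisson equation \eqref{flux corrector} and its massive analogue; the extension operator, since $(\phi-\phi_T)^\ext\neq\phi^\ext-\phi_T^\ext$ in general, which forces a consistent choice of extensions and the absorption of boundary-layer errors near $\partial B_R$, exactly as in the proof of Proposition~\ref{proposition 1}; and, above all, arranging the difference estimate so that the sum over dyadic scales reproduces the rate $(r_{**}/R)^{2\nu}$ rather than a worse power. I expect this last point — the geometric-series bookkeeping in the interplay of the massive term, the excess decay of Proposition~\ref{corollary 3} and the Hole Filling gain — to be the technically most demanding step.
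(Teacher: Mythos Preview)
Your strategy for \eqref{proposition 2 equation 2} --- telescope $(\phi,\sigma)-(\phi_T,\sigma_T)$ over dyadic massive scales and control each increment $\phi_{\tau'}-\phi_\tau$ through the massive equation it satisfies together with the large-scale Schauder theory --- is exactly the route the paper takes, deferring the iterative skeleton to \cite[Proof of Proposition~2]{mjm} and spelling out only the PDE-dependent one-step estimate in the perforated setting. Two remarks on details. First, for that one-step bound the paper does \emph{not} force the zeroth-order source $\chi_\M(\tau^{-1}-\tau'^{-1})\phi_\tau$ into divergence form via a Poisson/Neumann correction as you propose; instead it splits $u:=\phi_{\tau'}-\phi_\tau$ as $u=v+w$, where $v$ carries the constant part $-\tau'^{-1}\bar u$ of the effective right-hand side (which \emph{is} trivially $\nabla\cdot(-\tau'^{-1}\bar u\,x/d)$, so Proposition~\ref{corollary 3} applies directly to $v$) and $w$ absorbs the fluctuation via a plain energy bound plus monotonicity of the integral. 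Hole Filling plays no role here --- the algebraic gain driving the Campanato iteration is the excess-decay exponent of Propositions~\ref{theorem 1}--\ref{corollary 3}, not Lemma~\ref{lemma hole filling}. Second, since Proposition~\ref{corollary 3} presupposes the inner radius exceeds $r_*$, your telescoping a priori yields \eqref{proposition 2 equation 2} only for $R\ge\max\{r_*,r_{**}\}$; the extension down to $R\ge r_{**}$ (and with it $r_*\le C\,r_{**}$) is the ``reduction argument'' the paper borrows from \cite{mjm}, and is not quite the one-liner you wrote.

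Your ``part~2'' should be discarded. The displayed inequality $r_{**}\le C\,r_*$ is almost certainly a misprint for $r_*\le C\,r_{**}$: the latter is what \cite[Proposition~2]{mjm} states, what the proof of Theorem~\ref{theorem 2} actually uses (to transfer moment bounds from $r_{**}$ to $r_*$), and what you already extracted in part~1. Taken literally --- with $r_{**}$ an arbitrary radius for which \eqref{proposition 2 equation 1} holds --- the bound $r_{**}\le C\,r_*$ is simply false (take $r_{**}$ enormous). Your proposed argument for it also has a genuine gap: the line ``the spatial mean of $\phi_{i,T}$ is absorbed using stationarity and $\E[\chi_\M\phi_{i,T}]=0$'' conflates an ensemble identity with a pathwise bound on $\fint_{B_R}\phi_T$. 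Deterministically obtaining the decay $T^{-1}\fint_{B_R}\phi_T^2\lesssim(r_*/R)^{2\nu}$ from the mere sublinearity threshold at $r_*$ is precisely what the Spectral Gap input is there to supply later in the paper, and cannot be short-circuited here.
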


\begin{proof}
This proof is very much based on \cite[Proof of Proposition 2]{mjm}. Actually, it follows verbatim except for the Step 3 in that proof, which requires the use of the PDE (which is obviously the only difference compared to our setting). Let us therefore focus on this step. 


{ \color{black}

\medskip

To show the analogous statement as in \cite[Step 3 in the Proof of Proposition 2]{mjm}, after application of the Poincar\'e inequality we need to show the following
\begin{multline}
\label{proof of proposition 2 equation 5}
\fint_{B_r \cap \M} |\nabla (\phi_{T_0}-\phi_T)|^2
\lesssim
\left(
\frac{1}{R^2}+ \left( \frac{R}{r}\right)^d\frac1{T_0}
 \right)
  \inf_c \fint_{B_R \cap \M}(\phi_{T_0}-\phi_T-c)^2
  \\
  +
   \left( \frac{R}{r}\right)^{d}\fint_{B_R \cap \M}\frac1T(\phi_{T})^2
\end{multline}
for $r_*\le r\le \frac14 R \le \sqrt{T_0}$ with $T = R^2$. 

To prove \eqref{proof of proposition 2 equation 5}, observe that by the definition of the massive correctors $\phi_T$ and $\phi_{T_0}$
\eqref{massive corrector}, their difference $u := \phi_{T_0} - \phi_T$
solves 
\begin{equation}
\label{proof of proposition 2 equation 6}
\frac1{T_0}\chi_\M u-\nabla \cdot a\nabla u=\chi_\M f
\end{equation}
with $f = \left( \frac 1T - \frac 1{T_0}\right) \phi_T$. 

Testing \eqref{proof of proposition 2 equation 6} with $\eta^2(u^\ext-\bar u)$, where $\eta$ is a cutoff of $B_{R/2}$ in $B_{R}$ and $\bar u:= \frac{\int \eta^2 \chi_\M u}{\int \eta^2 \chi_\M}$ we get
\begin{multline}
\label{proof of proposition 2 equation 6a}
\int \frac1{T_0} \chi_\M \eta^2(u^\ext-\bar u)^2+ \int a\nabla u^\ext\cdot \nabla( \eta^2(u^\ext-\bar u))
=\int \chi_\M f \eta^2(u^\ext-\bar u).
\end{multline}
In order to get $(u^\ext-\bar u)^2$ on the LHS the above choice of $\bar u$ was crucial. Applying Young's inequality on the RHS and absorbing the gradient term into the LHS we get for $\alpha > 0$ large enough 
\begin{multline}
\label{proof of proposition 2 equation 7}
\int_{B_R\cap \M} \left( \frac1{T_0} \eta^2(u-\bar u)^2+ |\nabla(\eta(u -\bar u))|^2\right)\\
\lesssim \alpha^{-1}\int_{B_R\cap \M} R^2 \textcolor{black}{\eta^2}  f^2 
+ (\alpha+1) \int_{B_R\cap\M}\frac1{R^2}(u-\bar u)^2\,,
\end{multline}
where we also used that all the terms were vanishing outside of $\M$, hence we could have replaced $B_R$ with $B_R \cap \M$ (while removing $\chi_\M$ as well as the need for extension). 
%
On the other hand, testing \eqref{proof of proposition 2 equation 6} with $\eta^2$ yields
\begin{equation}
\label{proof of proposition 2 equation 9}
\int\frac1{T_0}\chi_\M u^\ext\eta^2 +a\nabla u^\ext \cdot \nabla(\eta^2)=\int\chi_\M f \eta^2,
\end{equation}
which, combined with the identity
\begin{equation}
\label{proof of proposition 2 equation 10}
a\nabla u \cdot \nabla(\eta^2)=2 a\nabla(\eta(u-\bar u))\cdot \nabla \eta-2 (u-\bar u) a\nabla \eta\cdot \nabla \eta
\end{equation}
implies
\begin{equation}
\label{proof of proposition 2 equation 11}
R^2 \left(\frac1{T_0} \bar u \right)^2 \lesssim \fint_{B_R\cap \M} \left(\frac{1}{R^2}(u-\bar u)^2 + R^2\eta^2 f^2+ |\nabla (\eta(u-\bar u))|^2 \right).
\end{equation}
To get $\bar u$ on the LHS we used that $\int \eta^2 \chi_\M \gtrsim R^d$, i.e., that by removing holes we take away only portion of the volume. 

By combining \eqref{proof of proposition 2 equation 7}, \eqref{proof of proposition 2 equation 11} we arrive at
\begin{multline}
\label{proof of proposition 2 equation 12}
R^2 \left(\frac1{T_0} \bar u \right)^2 +\fint_{B_{R/2}\cap \M} \left( \frac1{T_0} (u -\bar u)^2+ |\nabla u|^2\right)
\\
\lesssim
\fint_{B_R \cap \M} \left(\frac{1}{R^2}(u-\bar u)^2 + \chi_\M R^2 \eta^2 f^2\right).
\end{multline}
We further claim that 
\begin{equation}
\fint_{B_R} \frac{1}{R^2}\chi_\M (u-\bar u)^2 \lesssim \inf_c \fint_{B_R \cap \M} \frac{1}{R^2}(u-c)^2\,.
\end{equation}
This follows from Lemma~\ref{lemma about contradiction argument}, modulo rescaling. Observe that the constant hidden inside $\lesssim$ is independent of $R$ and perforation $\omega$. 

Therefore, \eqref{proof of proposition 2 equation 12} can be equivalently rewritten as
\begin{multline}
\label{proof of proposition 2 equation 12a}
R^2 \left(\frac1{T_0} \bar u \right)^2 +\fint_{B_{R/2}\cap \M} \left( \frac1{T_0} (u-\bar u)^2+ |\nabla u|^2\right)
\lesssim
\inf_c \fint_{B_R\cap\M} \left(\frac{1}{R^2}(u-c)^2 + R^2 f^2\right).
\end{multline}

Let $v$ and $w$ be the solutions to the auxiliary boundary value problems 
\begin{equation}
\label{proof of proposition 2 equation 13}
-\nabla \cdot a \nabla v=-\frac{1}{T_0}\chi_\M \bar u \qquad \text{in } \tilde B_{R/2}, \quad v=u\ \text{on}\ \partial \tilde B_{R/2},
\end{equation}
\begin{equation}
\label{proof of proposition 2 equation 14}
-\nabla \cdot a \nabla w=-\frac{1}{T_0}\chi_\M (u-\bar u)+\chi_\M f \qquad \text{in }\tilde B_{R/2}, \quad w=0\ \text{on}\ \partial \tilde B_{R/2}\,.
\end{equation}
Observe that $- \nabla \cdot a \nabla u = - \nabla \cdot a \nabla(v+w)$ in $\tilde B_{R/2}$ and $u=v+w$ agree on the boundary, hence $u=v+w$. 

Let us now get estimates on $v$ and $w$. Testing \eqref{proof of proposition 2 equation 13} with $v-u$ we obtain
\begin{equation}
\label{proof of proposition 2 equation 13+}
\int_{\tilde B_{R/2}} a \nabla v \cdot \nabla (v-u)=-\frac{1}{T_0} \bar u \int_{\tilde B_{R/2}} \chi_\M (v-u)\,,
\end{equation}
which after Cauchy-Schwarz and averaging integrals becomes 
\begin{equation}
\label{proof of proposition 2 equation 15}
\fint_{\tilde B_{R/2}\cap \M} |\nabla v|^2 \lesssim \fint_{\tilde B_{R/2}\cap \M} |\nabla u|^2 + \alpha^{-1}R^2\left(\frac{1}{T_0} \bar u\right)^2+ \frac{\alpha}{R^2}\fint_{\tilde B_{R/2}\cap \M} |v-u|^2\,.
\end{equation}

Since $u-v=0$ on $\partial \tilde B_{R/2}$, we can simply extend $u-v$ by zero outside and apply Poincar\'e inequality in $B_{R/2+1}$ to the effect of
\begin{equation}
\label{proof of proposition 2 equation 16}
\int_{\tilde B_{R/2}\cap \M} |v-u|^2\lesssim R^2 \int_{\tilde B_{R/2}\cap \M} |\nabla(v-u)|^2\,.
\end{equation}
Substituting \eqref{proof of proposition 2 equation 16} into \eqref{proof of proposition 2 equation 15}, and reabsorbing the gradient of $v$ into the LHS by choosing $\alpha$ sufficiently small, we thus obtain
\begin{equation}
\label{proof of proposition 2 equation 18}
\fint_{\tilde B_{R/2}\cap \M} |\nabla v|^2 \lesssim \fint_{\tilde B_{R/2}\cap \M} |\nabla u|^2 +R^2\left(\frac{1}{T_0} \bar u\right)^2\,.
\end{equation}

We turn now to $w$. By testing \eqref{proof of proposition 2 equation 14} with $w$, which we can since $w$ is zero on the boundary,  we obtain
\begin{equation}
\label{proof of proposition 2 equation 19}
\fint_{\tilde B_{R/2}\cap \M} |\nabla w|^2 \lesssim \fint_{\tilde B_{R/2}\cap \M} \left(\alpha^{-1}\frac{R^2}{T_0^2}|u-\bar u|^2+ R^2 f^2 \right)+ \fint_{\tilde B_{R/2}\cap \M} \frac\alpha{R^2} w^2\,.
\end{equation}
By the same argument as above for $u-v$ we have that 
\begin{equation}
\label{proof of proposition 2 equation 20}
\int_{\tilde B_{R/2}\cap \M} w^2\lesssim R^2 \int_{\tilde B_{R/2}\cap \M} |\nabla w|^2\,,
\end{equation}
where the constant in $\lesssim$ is independent of $\M$. Thus, \eqref{proof of proposition 2 equation 20} leads to
\begin{equation}
\label{proof of proposition 2 equation 21}
\fint_{\tilde B_{R/2}\cap \M} |\nabla w|^2 \lesssim \fint_{\tilde B_{R/2}\cap \M} \left(\frac{R^2}{T_0^2}|u-\bar u|^2 + R^2 f^2\right)\,.
\end{equation}

Now we can retrace once again the arguments from~\cite[Proof of Proposition 2]{mjm}. To get~\eqref{proof of proposition 2 equation 5}, we employ Schauder's Estimates Proposition~\ref{corollary 3} in form of~\eqref{corollary 3 equation 3}. 
Observing that $h$ satisfies
\begin{equation}\nonumber
  \sup_{1 \le r \le R} \inf_{\xi \in \R^d} \biggl( \frac Rr \biggr)^{2\alpha} r^2 \fint_{B_r \cap\M} \left|\frac{1}{T_0} \bar u \right|^2 \overset{\alpha \le 1}{\lesssim} R^2 \biggl( \frac{1}{T_0} \bar u \biggr)^2\,,
\end{equation}
we obtain from \eqref{corollary 3 equation 3} as well as \eqref{proof of proposition 2 equation 18}
that for $r_* \le r \le \frac R2$ 
\begin{equation}\nonumber
    \fint_{B_r \cap\M} |\nabla v|^2 \lesssim R^2 \biggl( \frac{1}{T_0} \bar u \biggr)^2 + \fint_{\tilde B_{R/2} \cap\M} |\nabla u|^2. 
\end{equation}
For $w$ we simply use \eqref{proof of proposition 2 equation 21} and monotonicity of integral so that for $r_* \le r \le \frac R2$ 
\begin{equation}
    \fint_{B_r \cap\M} |\nabla w|^2 \lesssim \biggl( \frac Rr \biggr)^{d} \fint_{\tilde B_{R/2}\cap \M} \left(\frac{R^2}{T_0^2}|u-\bar u|^2 + R^2 f^2\right)\,.
\end{equation}

Since $u=v+w$, combining the previous estimates on $v$ and $w$ by triangle inequality together with~\eqref{proof of proposition 2 equation 12a} we get for $r_* \le r \le \frac R2$
\begin{equation}
  \fint_{B_r\cap\M} |\nabla u|^2 \overset{\text{Lemma}~\ref{lemma about contradiction argument}}{\lesssim} \biggl(\frac1{R^2} + \biggl( \frac Rr\biggr)^d \frac 1{T_0} \biggr) \inf_c \fint_{B_R \cap \M} (u-c)^2 + \biggl( \frac Rr\biggr)^d R^2 \fint_{B_R \cap \M} f^2.
\end{equation}
With our choice  $u = \phi_{T_0} - \phi_T$ and $f = \left( \frac 1T - \frac 1{T_0}\right) \phi_T$ with $T \le 16T_0$ the previous estimate yields \eqref{proof of proposition 2 equation 5}. 
Since the matrix $\M$ plays no role in the definition of the flux corrector $\sigma$, one gets to the analogous estimate~\cite[(163)]{mjm}. 

Since the rest of the argument does not use equation at all but instead builds on the acquired one-step estimates, let us just quickly sketch it (for details see Steps 1 to 3 in~\cite[Proof of Proposition 2]{mjm}). 

First, a classical Campanato iteration argument~\cite[Step 2, Proof of Proposition 2]{mjm} turns one-step estimates into
\begin{equation}\nonumber
    \frac{1}{R^2} \inf_c \fint_{B_R \cap \M} |(\phi_T,\sigma_T)-c|^2 \lesssim \biggl( \frac{r_{**}}{R}\biggr)^{2\nu}
\end{equation}
    for $\max\{r_*,r_{**}\}$$ \le R \le \sqrt{T_0}$ and $R^2=T$. Second, using reduction argument and the fact that by qualitative homogenization the  differences $\phi_T - \phi$ as well as $\sigma_T - \sigma$ measured in averaged sense in $L^2(B_{R})$ converge to $0$, we complete the proof of this Proposition. }
\end{proof}

Next, in Proposition~\ref{proposition 3}, we show that the massive corrector $\phi_T$ can be controlled by the average of the $H^1$-norm of field and flux of the massive corrector $\phi_t$ localised on a smaller scale ($t\le T$).

\

Let
\begin{equation}
\label{exponential weight}
\omega_T(x):=\frac{1}{|\partial B_1|(d-1)!}\frac{1}{(\kappa\sqrt{T})^d}\exp\left(-\frac{|x|}{\kappa\sqrt{T}} \right)\,,
\end{equation}
where $\kappa$ is a positive constant.

\begin{proposition}
\label{proposition 3}
For all $0<t\le T$ we have
\begin{equation}
\label{propositon 3 equation 1}
\int \omega_T \frac1T |(\phi_T^\ext,\sigma_T)|^2\le C(d,a_\pm) \int\omega_T \left(\frac1t |\phi_t^\ext|^2+\frac1t |g_t|^2+|\nabla g_t|^2  \right)\,,
\end{equation}
where $g_T$ is vector field defined as the solution of
\begin{equation}
\label{equation definition of g_T}
\frac1T g_T-\Delta g_T=\frac{1}{\sqrt{T}}(q_T-\E[q_T])\,.
\end{equation}
\end{proposition}

\begin{proof}
    We observe that it is enough to prove the estimate with $\phi_T^\ext$ and $\phi_t^\ext$ replaced by $\chi_\M\phi_T$ and $\chi_\M\phi_t$, respectively. For this, the proof retraces, modulo very minor changes, \cite[Proof of Proposition~3]{mjm}. At first sight, the gradient of the convolution of $\phi_t$ could be problematic in the presence of perforations. However, one observes that the aforementioned convolution is never evaluated at $\tau=0$ ($\tau$ being the convolution parameter). Indeed, in the argument from \cite[Proof of Proposition~3]{mjm} one can always handle the terms containing the gradient of the convolution by discharging the derivatives onto the mollifier, and estimating the extra terms away. Hence, effectively one only needs to work with the convolution of $\phi_t$, which is unproblematic.
\end{proof}

Now, in preparation for the next proposition, we state and prove two technical lemmata. The first establishes almost sure bounds on the local $H^1$ norm of the massive corrector $\phi_T$, whereas the second quantifies, under suitable assumptions, the exponential growth of the $H^1$ norm of solutions of degenerate elliptic equations in divergence form in the presence of a massive term via the introduction of an appropriate exponential weight.

\begin{lemma}
\label{lemma corrector bounded on balls}
The massive corrector $\phi_T$ defined in accordance with \eqref{massive corrector} satisfies the estimate
\begin{equation}
\label{lemma corrector bounded on balls equation 1}
\int \omega_T \left(\frac1T (\phi_T^\ext)^2+|\nabla \phi_T^\ext|^2\right)<+\infty
\end{equation}
almost surely.
\end{lemma}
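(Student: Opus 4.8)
The plan is to reduce the almost-sure pointwise bound \eqref{lemma corrector bounded on balls equation 1} to an integrability statement in $L^p(\Omega)$, and then invoke stationarity together with an ergodic-type covering argument. First I would test the defining equation \eqref{massive corrector equation probability space} for $\overline\phi_T$ with the test function $\overline\phi_T$ itself (which is legitimate since $\overline\phi_T\in W^{1,2}(\Omega)$); using the ellipticity bounds $a_-|\xi|^2\le \xi\cdot a_\omega\xi$, $|a_\omega\xi|\le a_+|\xi|$ on $\M$, and Young's inequality to absorb the cross term $\E[a_\omega\mathbf e\cdot\nabla\phi_T]$, this yields the standard energy estimate
\begin{equation}
\label{energy estimate probability space}
\E\!\left[(1-\pi_0)\tfrac1T\phi_T^2 + a_-\chi_\M|\nabla\phi_T|^2\right]\le C(d,a_\pm)\,,
\end{equation}
since $\mathbf e$ is a fixed unit vector and $|\{\,x : \chi_\M(x)=1\,\}|$ contributes only a bounded factor after taking expectation. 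One then applies the Extension Theorem~\ref{theorem extension} inclusion by inclusion (in the $L^2(\Omega)$-valued sense, using Remark~\ref{remark geometric assumptions}(b) so that extension domains do not overlap and the constant is uniform in $\omega$ and $k$) to pass from $\chi_\M\nabla\phi_T$ and $\chi_\M\phi_T$ to the extended field $\phi_T^\ext$ on all of $\R^d$. This produces
\begin{equation}
\label{energy estimate extended}
\E\!\left[\int_{\square_1}\Bigl(\tfrac1T(\phi_T^\ext)^2+|\nabla\phi_T^\ext|^2\Bigr)\right]\le C(d,a_\pm)\,,
\end{equation}
where the finiteness of the $\tfrac1T(\phi_T^\ext)^2$ term on a unit cell follows because the extension is local and $\phi_T^\ext$ agrees with $\phi_T$ on $\M$ while being controlled on the inclusions by the $W^{1,2}$ extension bound plus Poincaré on each $\mathcal B_\omega^k\setminus\omega^k$.

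Having \eqref{energy estimate extended}, the left-hand side of \eqref{lemma corrector bounded on balls equation 1} is the supremum over $y\in\R^d$ of $F(T_y\omega)$, where $F(\omega):=\int_{B_1}\bigl(\tfrac1T(\phi_T^\ext)^2+|\nabla\phi_T^\ext|^2\bigr)$ is (after covering $B_1$ by finitely many unit cubes) an element of $L^1(\Omega)$ with $\E[F]\le C(d,a_\pm)$. Since $\nabla\phi_T^\ext$ is stationary by construction — being the extension of the stationary field $\psi_T(\cdot,\omega)$ — and the massive term pins $\phi_T$ itself so that $\phi_T^\ext$ is stationary as well, the quantity $F(T_y\omega)$ is a stationary random field in $y$. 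The almost-sure finiteness of its spatial supremum is then a soft consequence: one covers $\R^d$ by a countable family of unit balls $\{B_1(z_j)\}$ with $z_j$ in a lattice, notes that for each fixed $j$ the random variable $F(T_{z_j}\omega)$ is finite almost surely (being in $L^1(\Omega)$), and then upgrades to a uniform bound over all $y$ by using that $B_1(y)\subset B_2(z_j)$ for the nearest lattice point, so $\sup_{y}F(T_y\omega)\le \sup_j \tilde F(T_{z_j}\omega)$ for a slightly larger functional $\tilde F\in L^1(\Omega)$; each $\tilde F(T_{z_j}\omega)$ being a.s.\ finite and there being only countably many $j$, the supremum is a.s.\ finite. (This is exactly the standard argument by which one deduces pointwise a.s.\ bounds on stationary $H^1_\loc$ fields from their $L^2(\Omega)$ membership, cf.\ \cite[Chapter~7]{ZKO}.)

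The main obstacle — and the only place the perforated geometry genuinely enters — is controlling $\phi_T^\ext$ on the inclusions in the $\tfrac1T$-weighted term: the defining equation \eqref{massive corrector equation probability space} only sees $\phi_T$ through $(1-\pi_0)$, i.e.\ on $\M$, so the extension into $\omega$ is a priori unconstrained by the PDE and one must rely purely on Theorem~\ref{theorem extension}(ii) together with the uniform minimal-smoothness of $\mathcal B_\omega^k\setminus\omega^k$ and the diameter bound $\operatorname{diam}\omega^k<\tfrac12$ to bound $\int_{\omega^k}(\phi_T^\ext)^2$ by $\int_{\mathcal B_\omega^k\setminus\omega^k}(\phi_T)^2$, which in turn is controlled via a local Poincaré inequality (with constant uniform in $k$ by the scaling normalization in Assumption~\ref{assumption 1}(iv)) by $\|\nabla\phi_T\|^2$ on that cell plus its average. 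Summing over the inclusions meeting a fixed unit ball and taking expectation keeps everything finite; the key point is simply that the extension constants and Poincaré constants are uniform in $\omega$ and $k$, which is guaranteed by our geometric assumptions.
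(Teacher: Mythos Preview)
Your argument has a genuine gap in the final step. You write: ``each $\tilde F(T_{z_j}\omega)$ being a.s.\ finite and there being only countably many $j$, the supremum is a.s.\ finite.'' This inference is false. A countable intersection of full-measure sets has full measure, so indeed for a.e.\ $\omega$ every $\tilde F(T_{z_j}\omega)$ is simultaneously finite; but ``all finite'' does not mean ``uniformly bounded'', and the supremum of a countable family of finite numbers can perfectly well be $+\infty$. In fact, in your exact setting there is a sharp obstruction: if $\tilde F\in L^1(\Omega)$ is \emph{not} essentially bounded, then by ergodicity the set $\{y:\tilde F(T_y\omega)>M\}$ has positive spatial density for every $M$, hence $\sup_y\tilde F(T_y\omega)=+\infty$ almost surely. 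So an argument that only uses $\tilde F\in L^1(\Omega)$ cannot possibly reach the conclusion.

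The paper proceeds differently. Rather than attacking the supremum over $y$ directly, it targets the exponentially weighted quantity $\int\omega_T\bigl(\tfrac1T(\phi_T^\ext)^2+|\nabla\phi_T^\ext|^2\bigr)$ and shows this is finite almost surely --- which, incidentally, is the property actually invoked downstream in Lemma~\ref{prop:omega_T estimate}. The mechanism is Borel--Cantelli: with $X_z:=\int_{\square_1(z)}(\cdots)$, stationarity gives $\E X_z=\E X_0<\infty$ (this is essentially your energy estimate~\eqref{energy estimate extended}), and one then picks a sequence $c_z$ with $\sum_z c_z^{-1}<\infty$ and $\sum_z c_z w_z<\infty$ (e.g.\ $c_z=w_z^{-1/2}$, where $w_z$ are the discretised weights coming from $\omega_T$). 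Markov's inequality plus Borel--Cantelli yields $X_z\le c_z$ for all but finitely many $z$, whence $\sum_z w_z X_z<\infty$ a.s. The ingredient you are missing is precisely this \emph{quantitative} growth control on $X_z$: knowing each $X_z$ is a.s.\ finite buys nothing, whereas knowing $X_z\lesssim c_z$ eventually, with $c_z$ growing strictly slower than $w_z^{-1}$, is exactly what makes the weighted sum converge.
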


\begin{proof}
Define the family of random variables
\begin{equation}
\label{proof lemma corrector bounded on balls equation 2}
X_z:=\int_{\square_1(z)} \left(\frac1T (\phi_T^\ext)^2+|\nabla \phi_T^\ext|^2\right), \qquad z\in \mathbb{Z}^d.
\end{equation}
Clearly, by stationarity, 
\begin{equation}
\label{proof lemma corrector bounded on balls equation 2bis}
\E  X_z=\E X_0\qquad \text{for all }z\in \Z^d.
\end{equation}
By means of a standard discretisation argument, we can write
\begin{equation}
\label{proof lemma corrector bounded on balls equation 3}
\int \omega_T \left(\frac1T (\phi_T^\ext)^2+|\nabla \phi_T^\ext|^2\right)=\sum_{z\in \mathbb{Z}} w_z  X_z,
\end{equation}
for some family of (exponential) weights $w_z$. It is easy to see that there exists a family of positive constants $\{c_z\}_{z\in \mathbb{Z}}$ such that
\begin{equation}
\label{proof lemma corrector bounded on balls equation 4}
\sum_{z\in \mathbb{Z}}\frac1{c_z}<+\infty \qquad \text{and}\qquad \sum_{z\in \mathbb{Z}}c_z w_z<+\infty.
\end{equation}
For instance, one can take $c_z=w_z^{-1/2}$.

With account of formulae \eqref{proof lemma corrector bounded on balls equation 2bis}, \eqref{proof lemma corrector bounded on balls equation 4}, the Borel--Cantelli Lemma and Markov's Inequality ensure that $X_z>c_z$ for at most finitely many $z$'s. Hence 
\eqref{proof lemma corrector bounded on balls equation 1} follows from
\eqref{proof lemma corrector bounded on balls equation 3} and \eqref{proof lemma corrector bounded on balls equation 4}.
\end{proof}

\begin{remark}
Let us emphasise that the estimate~\eqref{lemma corrector bounded on balls equation 1} is not uniform in $\omega$, but it is `only' an almost sure bound, as the proof shows. Uniformity will be a consequence of the proof of the upcoming lemma.
\end{remark}

\begin{lemma}
\label{prop:omega_T estimate}
Let $f\in L^2_\mathrm{loc}(\R^d)$ and $h\in [L^2_\mathrm{loc}(\R^d)]^d$ be such that
$$
\int_{\R^d}\omega_T\left(\frac1T f^2+|h|^2 \right)<+\infty\,.
$$ 
Suppose $\phi$ belongs to the class of functions with the property
\begin{equation*}
\label{prop:omega_T estimate equation 1}
\int_{\M} \omega_T \left(\frac1T \phi^2+|\nabla \phi|^2\right)<+\infty \qquad \text{a.s.}
\end{equation*}
and satisfies
\begin{equation}
\label{29 January 2024 equation 1}
\frac1T\int_{\R^d} \chi_\M \phi\, v +\int_{\R^d}a\, \nabla \phi \cdot  \nabla v=\frac1T\int_{\R^d} fv+ \int_{\R^d} h \cdot \nabla v
\end{equation} 
for all $v\in H^1(\R^d)$ with compact support.
Then we have
\begin{equation}
\label{29 January 2024 equation 2}
\int_{\R^d} \omega_T \left( \frac1T(\phi^\ext)^2 +|\nabla \phi^\ext|^2\right)\lesssim
\int_{\R^d} \omega_T \left( \frac1T f^2 +|h|^2\right)\,.
\end{equation} 
\end{lemma}

\begin{proof}
    Let $\phi^\ext$ be the extension of $\phi|_\M$ into $\omega$. Let $\psi$ be a positive, smooth, and sufficiently fast-decaying function. Let us (formally, for now) test \eqref{29 January 2024 equation 1} with $\psi^2\phi^\ext \in H^1(\R^d)$: this step will be justified \emph{a posteriori} for a specific choice of function $\psi$. Using the fact that $\chi_\M\phi=\chi_\M \phi^\ext$ we obtain
\begin{equation}
\label{29 January 2024 equation 3}
\frac1T\int_{\R^d} \chi_\M \psi^2 (\phi^\ext)^2+\int_{\R^d}a\, \nabla \phi^\ext \cdot  \nabla (\psi^2\phi^\ext)=\frac1T\int_{\R^d} \psi^2 f\phi^\ext+ \int_{\R^d} h \cdot \nabla (\psi^2\phi^\ext) \,.
\end{equation} 
Purely algebraic manipulations give us
\begin{equation}
\label{29 January 2024 equation 4}
a\, \nabla \phi^\ext \cdot  \nabla (\psi^2\phi^\ext)=a\, \nabla (\psi\phi^\ext) \cdot  \nabla (\psi\phi^\ext) - (\phi^\ext)^2 a \nabla \psi \cdot \nabla \psi\,.
\end{equation}
Since
 $h \cdot \nabla (\psi^2\phi^\ext) = h \cdot \left[ \psi\nabla (\psi\phi^\ext) +\psi\phi^\ext\nabla\psi \right]$,
substituting \eqref{29 January 2024 equation 4} into \eqref{29 January 2024 equation 3} and using Young's inequality we obtain
\begin{multline}
\label{29 January 2024 equation 6}
\int_{\M}  \left(\frac1T \psi^2 (\phi^\ext)^2+|\nabla (\psi\phi^\ext)|^2\right)
\\
\lesssim
\int_{\R^d}(\phi^\ext)^2 a \nabla \psi \cdot \nabla \psi
+
\frac1T\int_{\R^d} \psi^2 f\phi^\ext+ \int_{\R^d} h \cdot \nabla (\psi^2\phi^\ext)
\\
\lesssim 
(1+\beta) \int_{\R^d}|\nabla \psi|^2(\phi^\ext)^2
+
\frac{\gamma^{-1}}T\int_{\R^d} \psi^2 f^2
\\
+
\frac\gamma T \int_{\R^d} \psi^2 (\phi^\ext)^2
+
\alpha \int_{\R^d} |\nabla\left(\psi \phi^\ext\right)|^2
+
(\alpha^{-1}+\beta^{-1})\int_{\R^d} \psi^2|h|^2\,.
\end{multline}
Here $\alpha,\beta, \gamma$ are positive constants that can be chosen arbitrarily and come from Young's inequality. 

Before proceeding, we observe that we can replace $|\nabla(\psi\phi^\ext)|^2$ with $\psi^2|\nabla\phi^\ext|^2$ everywhere in \eqref{29 January 2024 equation 6} up to adjusting the constants. Indeed, we have $|\nabla(\psi\phi^\ext)|^2=\psi^2|\nabla\phi^\ext|^2+ |\nabla\psi|^2(\phi^\ext)^2 + 2 \psi\phi^\ext \nabla \psi \cdot \nabla \phi^\ext$, and the term $2 \psi\phi^\ext \nabla \psi \cdot \nabla \phi^\ext$ can be reabsorbed by the remaining terms using Young's inequality once more. 

Let us now put $\psi:=\sqrt{T}^{\frac{d}2}\omega_{\frac{T}2}$. Note that we have $\psi^2 \sim \omega_T$ and, upon choosing a sufficiently large constant $\kappa$ in the definition of $\omega_T$ (recall~\eqref{exponential weight}), $|\nabla \psi|^2 \ll \frac1T \omega_T$.
It is not difficult to see that for this choice of function $\psi$, all integrals appearing in the first part of the proof are severally finite. All steps above are then justified by a standard approximation argument of $\psi^2 \phi^\ext$ by compactly supported functions.

We claim that
\begin{equation}
\label{29 January 2024 equation 8}
\int_{\R^d} \omega_T (\phi^\ext)^2 \lesssim \int_{\M} \omega_T \phi^2
\end{equation}
and
\begin{equation}
\label{29 January 2024 equation 9}
\int_{\R^d} \omega_T |\nabla\phi^\ext|^2 \lesssim \int_{\M} \omega_T |\nabla\phi|^2\,.
\end{equation}
The estimates \eqref{29 January 2024 equation 8} and \eqref{29 January 2024 equation 9} are a straightforward consequence of the properties of the extension and the following simple observation. Let $\omega^k$ be a given inclusion and let $\mathcal{B}_\omega^k$ be the corresponding extension domain. Then we have\footnote{Here $C$ is the constant appearing in the definition of $\omega_T$, see~\eqref{exponential weight}.}
\begin{equation}
\label{29 January 2024 equation 10}
\omega_T(x) \le \sup_{y\in \mathcal{B}_\omega^k} \omega_T(y) \le e^{\sqrt{d}/(\kappa\sqrt{T})}\inf_{y\in \mathcal{B}_\omega^k} \omega_T(y)\, \qquad \forall x\in \mathcal{B}_\omega^k.
\end{equation}

Hence, by choosing Young's constants appropriately in \eqref{29 January 2024 equation 6} and using \eqref{29 January 2024 equation 8}--\eqref{29 January 2024 equation 9}, we arrive at \eqref{29 January 2024 equation 2}.
\end{proof}

A closer examination of the above proof gives us the following stronger property than what claimed in Lemma~\ref{lemma corrector bounded on balls} for the massive corrector $\phi_T$.
\begin{corollary}
    \label{corollary corrector bounded on balls}
The massive corrector $\phi_T$ defined in accordance with \eqref{massive corrector} satisfies the estimate
\begin{equation}
\label{corollary corrector bounded on balls equation 1}
\sup_{y\in \R^d} \int_{B_1(y)} \left(\frac1T (\phi_T^\ext)^2+|\nabla \phi_T^\ext|^2\right) < +\infty
\end{equation}
almost surely.
\end{corollary}

\begin{proof}
Let us observe that it is enough to establish the estimate
\begin{equation}
\label{proof lemma corrector bounded on balls equation 1}
\int \omega_T \left(\frac1T (\phi_T^\ext)^2+|\nabla \phi_T^\ext|^2\right)<M \qquad \text{a.s.},
\end{equation}
where $\omega_T$ is the exponential weight \eqref{exponential weight} and $M$ is a positive constant which is independent of $\omega$. Indeed, \eqref{proof lemma corrector bounded on balls equation 1} immediately implies the estimate
\begin{equation}
\label{proof lemma corrector bounded on balls equation 1bis}
\int_{B_1} \left(\frac1T (\phi_T^\ext)^2+|\nabla \phi_T^\ext|^2\right) < +\infty \qquad \text{a.s.}
\end{equation}
for a ball centred at the origin.
Then, one can consider equation \eqref{massive corrector} with coefficients translated by $y$ and obtain an estimate of the form~\eqref{proof lemma corrector bounded on balls equation 1bis} for the corresponding solution. Since $M$ is unaffected by translations, using the stationarity of $\phi_T$ one obtains the claim.

But the estimate~\eqref{proof lemma corrector bounded on balls equation 1} is a straightforward consequence of the proof of Lemma~\ref{prop:omega_T estimate} for the choice of $f=0$ and $h=a\mathbf{e}$.
\end{proof}

Now comes a crucial result. 
It tells us that the $F_T$ is $\sqrt{T}$-local in the sense of \eqref{31 January 2024 prop equation 2} and will enable us to apply  Lemma \ref{lemma 4}. 

\bbb

\begin{proposition}
\label{prop:F_t local}
For $T>0$ let
\begin{equation}
\label{31 January 2024 prop equation 1}
F_T(a):=\int_{\R^d\cap \M} \omega_T \left(\frac{1}{T}\phi_T^2+|\nabla \phi_T|^2+\frac1T g_T +|\nabla g_T|^2 \right),
\end{equation}
where $g_T$ is defined in accordance with~\eqref{equation definition of g_T}.
Then
\begin{equation}
\label{31 January 2024 prop equation 2}
a_1=a_2 \quad \text{in}\quad B_R \quad \Rightarrow \quad |F_T(a_1)-F_T(a_2)|\lesssim e^{-\frac{R}{C\sqrt{T}}}\,.
\end{equation}
\end{proposition}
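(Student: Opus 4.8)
The plan is to derive \eqref{31 January 2024 prop equation 2} as a quantitative perturbation estimate, treating $\phi_{T}^{(1)},g_T^{(1)}$ (the objects built from $a_1$) and $\phi_T^{(2)},g_T^{(2)}$ (built from $a_2$) and estimating the $\omega_T$-weighted energy of their differences. Since $F_T$ is essentially a weighted quadratic energy, write $F_T(a_1)-F_T(a_2)$ as a sum of cross terms of the form $\int \omega_T\,(X_1-X_2)(X_1+X_2)$ (with $X=\phi_T/\sqrt T$, $\nabla\phi_T$, $g_T/\sqrt T$, $\nabla g_T$), apply Cauchy--Schwarz, and use the a priori bounds $\int\omega_T(\frac1T\phi_T^2+|\nabla\phi_T|^2)\lesssim 1$ and the analogous bound for $g_T$ (these follow from Lemma~\ref{prop:omega_T estimate} applied to the corrector equation, together with $\mathbb{E}$-integrated versions of Lemma~\ref{lemma corrector bounded on balls}, noting $q_T-\mathbb{E}[q_T]$ has bounded $\omega_T$-weighted $L^2$ norm). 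Thus it suffices to prove the difference estimate
\begin{equation}
\label{plan diff estimate}
\int_{\R^d}\omega_T\left(\tfrac1T|\phi_T^{(1)}-\phi_T^{(2)}|^2+|\nabla\phi_T^{(1)}-\nabla\phi_T^{(2)}|^2+\tfrac1T|g_T^{(1)}-g_T^{(2)}|^2+|\nabla g_T^{(1)}-\nabla g_T^{(2)}|^2\right)\lesssim e^{-\frac{R}{C\sqrt T}}\,.
\end{equation}

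For the corrector difference $u:=\phi_T^{(1)}-\phi_T^{(2)}$, subtract the two equations \eqref{massive corrector}: since $a_1=a_2$ in $B_R$, $u$ solves $\frac1T\chi_\M u-\nabla\cdot a_1(\nabla u+\nabla\phi_T^{(2)}+\mathbf{e})=-\nabla\cdot a_2(\nabla\phi_T^{(2)}+\mathbf{e})$ outside $B_R$ and $\frac1T\chi_\M u-\nabla\cdot a_1\nabla u=0$ inside $B_R$; equivalently $\frac1T\chi_\M u-\nabla\cdot a_1\nabla u=\nabla\cdot\big(\chi_{\R^d\setminus B_R}(a_1-a_2)(\nabla\phi_T^{(2)}+\mathbf{e})\big)$. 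This is exactly the setting of Lemma~\ref{prop:omega_T estimate} with $f=0$ and $h=\chi_{\R^d\setminus B_R}(a_1-a_2)(\nabla\phi_T^{(2)}+\mathbf{e})$, so
\begin{equation}
\label{plan corrector diff}
\int\omega_T\left(\tfrac1T(u^\ext)^2+|\nabla u^\ext|^2\right)\lesssim\int\omega_T|h|^2\lesssim\int_{\R^d\setminus B_R}\omega_T\,\chi_\M|\nabla\phi_T^{(2)}+\mathbf{e}|^2\,.
\end{equation}
The right-hand side is controlled by $\sup_{|x|\ge R}\frac{\omega_T(x)}{\inf_{B_1(x)}\omega_T}\cdot\big(\int_{|x|\ge R-1}\omega_T\big)^{1/2}\cdot(\ldots)$ — more simply, since $\omega_T(x)\le\omega_T(R)\,e^{-(|x|-R)/(\kappa\sqrt T)}$ for $|x|\ge R$ and $\int_{\R^d}e^{-|x|/(\kappa\sqrt T)}\,dx\lesssim T^{d/2}$, and $\int_{B_1(x)}\chi_\M|\nabla\phi_T^{(2)}+\mathbf{e}|^2$ is almost surely bounded uniformly in $x$ by Lemma~\ref{lemma corrector bounded on balls}, we get $\int_{\R^d\setminus B_R}\omega_T\chi_\M|\nabla\phi_T^{(2)}+\mathbf{e}|^2\lesssim \omega_T(R)\,T^{d/2}\lesssim e^{-R/(\kappa\sqrt T)}$ after absorbing the polynomial prefactor $R^d/T^{d/2}$ into a slightly larger constant $C$ in the exponent. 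This handles the $\phi_T$ part of \eqref{plan diff estimate}.

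For the flux-corrector-type term $g_T$, recall $\frac1Tg_T-\Delta g_T=\frac1{\sqrt T}(q_T-\mathbb{E}[q_T])$. The difference $v:=g_T^{(1)}-g_T^{(2)}$ satisfies $\frac1Tv-\Delta v=\frac1{\sqrt T}(q_T^{(1)}-q_T^{(2)})$ (the constants $\mathbb{E}[q_T]$ cancel). Now $q_T^{(i)}=a_i(\nabla\phi_T^{(i)}+\mathbf{e})$, so $q_T^{(1)}-q_T^{(2)}=a_1(\nabla\phi_T^{(1)}-\nabla\phi_T^{(2)})+(a_1-a_2)(\nabla\phi_T^{(2)}+\mathbf{e})$; the first summand is $\lesssim|\nabla u^\ext|$, already controlled in weighted $L^2$ by \eqref{plan corrector diff}, and the second is supported in $\R^d\setminus B_R$ and bounded exactly as above. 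Applying the elementary weighted energy estimate for the massive Laplacian (the $a=\mathrm{Id}$, no-perforation case of Lemma~\ref{prop:omega_T estimate}, or a direct testing with $\psi^2 v$) gives $\int\omega_T(\frac1Tv^2+|\nabla v|^2)\lesssim\int\omega_T\cdot T\cdot\frac1T|q_T^{(1)}-q_T^{(2)}|^2=\int\omega_T|q_T^{(1)}-q_T^{(2)}|^2\lesssim e^{-R/(C\sqrt T)}$. Combining everything yields \eqref{plan diff estimate} and hence \eqref{31 January 2024 prop equation 2}.

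The main obstacle I anticipate is the bookkeeping around the extension operator and the perforation geometry in \eqref{plan corrector diff}: one must be careful that applying Lemma~\ref{prop:omega_T estimate} to the difference $u$ is legitimate (the class-membership hypothesis $\sup_y\int_{B_1(y)\cap\M}(u^2+|\nabla u|^2)<\infty$ holds almost surely by Lemma~\ref{lemma corrector bounded on balls} applied to both $\phi_T^{(i)}$, and the source $h$ has finite $\omega_T$-weighted $L^2$ norm by the same uniform bound), and that the constants in all these weighted estimates are genuinely independent of $\omega$, of $R$, and of $T$ — the $T$-dependence must be entirely contained in the exponential factor $e^{-R/(C\sqrt T)}$, with only $T$-independent (or harmlessly polynomial, then absorbed) prefactors. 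The quasi-constancy of $\omega_T$ over extension domains, \eqref{29 January 2024 equation 10}, and the scaling $\int\omega_T\,dx\sim1$ are the key tools that make the constants uniform.
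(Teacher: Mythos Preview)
Your overall strategy matches the paper's: subtract the equations, apply Lemma~\ref{prop:omega_T estimate} to the difference, and exploit that the source is supported outside $B_R$ where $\omega_T$ is exponentially small. The handling of $g_T$ via the massive Laplacian is also the paper's route.

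There is, however, a real bookkeeping gap that you flag but do not resolve. In this model the perforation is part of the coefficient field: changing $a$ outside $B_R$ may change the holes, so $\M_1\neq\M_2$ outside $B_R$. This has two consequences you have missed.

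First, your difference equation is incomplete. Subtracting the two massive equations gives
\[
\tfrac1T\chi_{\M_1}u-\nabla\cdot a_1\nabla u
=\nabla\cdot\big((a_1-a_2)(\nabla\phi_T^{(2)}+\mathbf{e})\big)
-\tfrac1T(\chi_{\M_1}-\chi_{\M_2})\,\phi_T^{(2)},
\]
so in Lemma~\ref{prop:omega_T estimate} there is a nonzero $f=-(\chi_{\M_1}-\chi_{\M_2})\phi_T^{(2)}$, also supported in $\R^d\setminus B_R$. This is harmless---it is estimated exactly as $h$---but must be present.

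Second, and more seriously, your global cross-term identity $F_T(a_1)-F_T(a_2)=\sum\int\omega_T(X_1-X_2)(X_1+X_2)$ is not valid as written, because $F_T(a_1)$ integrates over $\M_1$ while $F_T(a_2)$ integrates over $\M_2$, and these sets differ outside $B_R$. Moreover, the extensions $(\,\cdot\,)^{\ext,1}$ and $(\,\cdot\,)^{\ext,2}$ are with respect to different hole configurations outside $B_R$, so even the pointwise quantities $X_1,X_2$ do not live on a common domain there. The paper resolves this by splitting $F_T=F_{T,B_R}+F_{T,\R^d\setminus B_R}$: inside $B_R$ the perforations and extensions agree, so your cross-term plus Cauchy--Schwarz argument applies verbatim; outside $B_R$ one simply bounds $|F_{T,\R^d\setminus B_R}(a_1)|+|F_{T,\R^d\setminus B_R}(a_2)|$ directly via $\omega_T\lesssim e^{-|x|/(2\kappa\sqrt T)}\omega_{4T}$ and the uniform bound $\int\omega_{4T}(\tfrac1T(\phi_T^{\ext})^2+|\nabla\phi_T^{\ext}|^2)\lesssim 1$. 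Once you insert this split and the missing $f$-term, your argument becomes complete and coincides with the paper's.
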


\begin{proof}
Let $\M_1$ and $\M_2$ be two perforations associated with coefficients $a_1$ and $a_2$, respectively, and let $\phi_1$ and $\phi_2$ be the corresponding correctors. Namely, $\phi_j$, $j=1,2$, satisfy
\begin{equation}
\label{31 January 2024 equation 1}
\frac1T\int_{\R^d} \chi_{\M_j}\phi_j^{\ext,j} v+\int_{\R^d} a_j (\nabla \phi_j^{\ext,j}+ \mathbf{e})\cdot\nabla v=0
\end{equation}
for all $v\in H^1(\R^d)$ with compact support. Here by $(\,\cdot\,)^{\ext,j}$ we mean the extension of $(\,\cdot\,)|_{\M_j}$ into $\omega_j$.
For the sake of brevity, in what follows we will prove the ``reduced'' estimate
\begin{equation}
\label{31 January 2024 equation 0}
\left|\int_{\R^d\cap \M} \omega_T \left(\frac{1}{T}(\phi_1^2-\phi_1^2)+(|\nabla \phi_1|^2-|\nabla \phi_2|^2)\right)\right|
\lesssim
e^{-\frac{R}{C\sqrt{T}}}\,.
\end{equation}
The strategy to estimate the contributions to \eqref{31 January 2024 prop equation 1}, \eqref{31 January 2024 prop equation 2} coming from $g_T$ is analogous (in fact, easier) and left for the reader --- see also \cite[Proof of Proposition~4]{mjm}.

\

Formula \eqref{31 January 2024 equation 1}, Lemma~\ref{lemma corrector bounded on balls}, and Lemma~\ref{prop:omega_T estimate} imply
\begin{equation}
\label{31 January 2024 equation 2}
\int_{\R^d}\omega_T \left(\frac1T (\phi_j^{\ext,j})^2+|\nabla\phi_j^{\ext,j}|^2 \right)<+\infty
\end{equation}
almost surely. Let $v\in H^1(\R^d)$ with compact support. Then, by \eqref{31 January 2024 equation 1}, we have
\begin{multline}
\label{31 January 2024 equation 3}
\frac1T\int_{\R^d} \chi_{\M_1} \left(\phi_1^{\ext,1}-\phi_2^{\ext,2}\right) v+\int_{\R^d}a_1\left( \nabla\phi_1^{\ext,1}-\nabla\phi_2^{\ext,2}\right)\nabla v
\\
=
-\frac1T\int_{\R^d} (\chi_{\M_1}-\chi_{\M_2}) \phi_2^{\ext,2} v-\int_{\R^d}(a_1-a_2)\left( \nabla\phi_2^{\ext,2}+\mathbf{e}\right)\nabla v.
\end{multline}
Applying Lemma~\ref{prop:omega_T estimate} to \eqref{31 January 2024 equation 3} we obtain
\begin{multline}
\label{31 January 2024 equation 4}
\int_{\R^d}\omega_T \left(\frac1T \left[ \left(\phi_1^{\ext,1}-\phi_2^{\ext,2}\right)^{\ext,1} \right]^2+\left|\left( \nabla\phi_1^{\ext,1}-\nabla\phi_2^{\ext,2}\right)^{\ext,1} \right|^2 \right)
\\
\lesssim
\frac1T\int_{\R^d}\omega_T (\chi_{\M_1}-\chi_{\M_2})^2 (\phi_2^{\ext,2})^2
+
\int_{\R^d}\omega_T (a_1-a_2)^2\left|\nabla\phi_2^{\ext,2}+\mathbf{e} \right|^2\,.
\end{multline}
Next we observe that, clearly,
\begin{equation}
\label{31 January 2024 equation 5}
\omega_T(x)\lesssim e^{-\frac{|x|}{2\kappa\sqrt{T}}}\omega_{4T}.
\end{equation}
Moreover, by arguing as in the proof of Lemma~\ref{lemma corrector bounded on balls}, we obtain that
\begin{equation}
\label{31 January 2024 equation 6}
\int_{\R^d}\omega_{4T} \left(\frac1T (\phi_j^{\ext,j})^2+|\nabla\phi_j^{\ext,j}+\mathbf{e}|^2 \right)\lesssim 1.
\end{equation}
Therefore, if $a_1=a_2$ (and hence $\chi_{\M_1}=\chi_{\M_2}$) in $B_R:=B_R(0)$, formulae \eqref{31 January 2024 equation 4}--\eqref{31 January 2024 equation 6} imply
\begin{multline}
\label{31 January 2024 equation 8}
\int_{\R^d}\omega_T \left( \frac1T\left[ \left(\phi_1^{\ext,1}-\phi_2^{\ext,2}\right)^{\ext,1} \right]^2+\left|\left( \nabla\phi_1^{\ext,1}-\nabla\phi_2^{\ext,2}\right)^{\ext,1} \right|^2 \right)
\\
\lesssim
e^{-\frac{R}{2\kappa\sqrt{T}}}\,.
\end{multline}
Now, the two extensions $(\,\cdot\,)^{\ext,1}$ and $(\,\cdot\,)^{\ext,2}$ coincide in $B_R$, hence the LHS of \eqref{31 January 2024 equation 11} can be equivalently recast as
\begin{multline}
\label{31 January 2024 equation 8bis}
\int_{\R^d}\omega_T \left( \frac1T\left[ \left(\phi_1^{\ext,1}-\phi_2^{\ext,2}\right)^{\ext,1} \right]^2+\left|\left( \nabla\phi_1^{\ext,1}-\nabla\phi_2^{\ext,2}\right)^{\ext,1} \right|^2 \right)
\\
=
\int_{B_R}\omega_T \left( \frac1T\left(\phi_1^{\ext,1}-\phi_2^{\ext,2}\right)^2+\left|\nabla\phi_1^{\ext,1}-\nabla\phi_2^{\ext,2}\right|^2 \right)
\\
+
\int_{\R^d\setminus B_R}\omega_T \left( \frac1T\left[ \left(\phi_1^{\ext,1}-\phi_2^{\ext,2}\right)^{\ext,1} \right]^2+\left|\left( \nabla\phi_1^{\ext,1}-\nabla\phi_2^{\ext,2}\right)^{\ext,1} \right|^2 \right)\,.
\end{multline}
This justifies the upcoming definition, and the fact that we pursue two different strategies for proving the sought-after estimate in $B_R$ and in $\R^d\setminus B_R$.

Define 
\begin{equation}
\label{31 January 2024 equation 9}
F_{T,A}(a_j):=\int_A \omega_T \left( \frac1T \phi_j^2+|\nabla \phi_j|^2\right) \chi_{\M_j}\,.
\end{equation}
Then, in view of \eqref{31 January 2024 equation 8bis}, inside $B_R$ the estimates \eqref{31 January 2024 equation 8} and \eqref{31 January 2024 equation 2} give us
\begin{equation}
\label{31 January 2024 equation 10}
|F_{T,B_R}(a_1)-F_{T,B_R}(a_2)|\lesssim e^{-\frac{R}{4\kappa\sqrt{T}}}\,.
\end{equation}
Outside of $B_R$ we simply estimate the modulus of difference by the sum of the moduli, so that, using once again \eqref{31 January 2024 equation 5} and \eqref{31 January 2024 equation 6}, we obtain
\begin{multline}
\label{31 January 2024 equation 11}
|F_{T,\R^d\setminus B_R}(a_1)-F_{T,\R^d \setminus B_R}(a_2)|
\\
\le F_{T,\R^d\setminus B_R}(a_1)+F_{T,\R^d \setminus B_R}(a_2)
\\
\lesssim
 e^{-\frac{R}{2C\sqrt{T}}}
\int_{\R^d}\omega_{4T} \left(\frac1T (\phi_1^{\ext,1})^2+|\nabla\phi_1^{\ext,1}|^2 \right)
\\
+
 e^{-\frac{R}{2C\sqrt{T}}}
\int_{\R^d}\omega_{4T} \left(\frac1T (\phi_2^{\ext,2})^2+|\nabla\phi_2^{\ext,2}|^2 \right)
\\
\lesssim
 e^{-\frac{R}{2\kappa\sqrt{T}}}\,.
\end{multline}

By combining \eqref{31 January 2024 equation 10} and \eqref{31 January 2024 equation 11} we arrive at \eqref{31 January 2024 equation 0}, with $C=4\kappa$.
\end{proof}

Let $G_t(x):=\frac1{(2\pi t)^{\nicefrac{d}{2}}}\exp\left( -\frac{|x|^2}{2t}\right)$ be a Gaussian of variance $t$. Given a function $f$, we denote by $f_{*t}$ the convolution (in $x$) of $f$ with $G_t$. 

Lemma \ref{lemma 2} and Lemma \ref{lemma 3} are used to prove Corollary \ref{corollary 6}. \bbb
\begin{lemma}
\label{lemma 2}
For all $T>0$ we have
\begin{equation}
\label{lemma 2 equation 1}
\E \left[ \frac1T (\phi_T^\ext)^2+\frac1T |g_T|^2+|\nabla g_T|^2\right] \lesssim \frac1T\int_0^T  \left(\left|\E \left[|(\nabla\phi_T^\ext)_{*t}|^2\right]\right| + \E \left[ \left|(q_T-\E[q_T])_{*\frac{t}2}\right|^2\right]\right) \mathrm{d}t\,.
\end{equation}
\end{lemma}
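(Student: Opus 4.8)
The plan is to estimate the three contributions on the left-hand side separately, using the defining equations for $\phi_T$, $g_T$, and $q_T$, together with Fourier analysis on the level of $L^2(\Omega)$ (more precisely, the spectral decomposition associated with the stationary extension). First I would handle the flux term: since $g_T$ solves $\frac1T g_T - \Delta g_T = \frac{1}{\sqrt T}(q_T - \E[q_T])$, taking expectations of $|g_T|^2$ and $|\nabla g_T|^2$ and using the representation of the solution operator $(\frac1T - \Delta)^{-1}$ as an integral against the heat semigroup, one writes, schematically,
\begin{equation}
\label{plan lemma 2 eq g}
\E\left[\frac1T|g_T|^2 + |\nabla g_T|^2\right] \lesssim \frac1T \int_0^\infty e^{-t/T}\,\E\left[\left|(q_T - \E[q_T])_{*\frac t2}\right|^2\right]\,\dt\,,
\end{equation}
where the factor $\frac1T$ and the exponential weight come from the identity $(\frac1T-\Delta)^{-1} = \int_0^\infty e^{-s/T} e^{s\Delta}\,\dr s$ combined with the bounds \eqref{convolution nabla delta estimates}; the exponential cutoff then lets one truncate the $t$-integral at $t=T$ up to an acceptable error, matching the claimed right-hand side. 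The semigroup property $e^{s\Delta} = e^{\frac s2 \Delta} e^{\frac s2\Delta}$ and stationarity are used to turn $\E[|e^{s\Delta}F|^2]$ into $\E[|F_{*s}|^2]$-type quantities with $s\sim t$.

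Next I would treat $\frac1T\E[(\phi_T^\ext)^2]$. Here one uses the corrector equation \eqref{massive corrector}, namely $\chi_\M\frac1T\phi_T = \nabla\cdot q_T = \nabla\cdot a(\nabla\phi_T+\mathbf e)$, to relate $\phi_T$ back to the flux and the gradient of the corrector. Testing with an appropriate representation (again via $(\frac1T-\Delta)^{-1}$ applied to the equation, exploiting that $\frac1T\phi_T$ is, up to the characteristic function $\chi_\M$, a divergence) yields a bound of $\frac1T\E[(\phi_T^\ext)^2]$ by an integral of $\E[|(\nabla\phi_T^\ext)_{*t}|^2]$ and $\E[|(q_T-\E[q_T])_{*t/2}|^2]$ over $t\in(0,T)$, matching the two terms on the RHS of \eqref{lemma 2 equation 1}. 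One has to be a little careful because of the perforation: the equation only holds with $\chi_\M$ on the zeroth-order term, so one works with the extension $\phi_T^\ext$ and absorbs the difference using the extension estimates of Theorem \ref{theorem extension} and the observation (used already in Lemma \ref{prop:omega_T estimate}) that the exponential/Gaussian weights are comparable across an inclusion and its extension domain.

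The main obstacle I anticipate is bookkeeping the interplay between the perforation and the Fourier/semigroup representation: the operators $(\frac1T - \Delta)^{-1}$ and $e^{t\Delta}$ are defined on the whole space $\R^d$ and act naturally on $\phi_T^\ext$, whereas the PDE for $\phi_T$ has the degenerate structure $\chi_\M\frac1T\phi_T - \nabla\cdot a(\nabla\phi_T+\mathbf e)=0$. Reconciling these — i.e.\ showing that replacing $\phi_T$ by $\phi_T^\ext$ and $\chi_\M$ by $1$ in the relevant identities only produces errors controlled by the RHS of \eqref{lemma 2 equation 1} — is where the real work lies; on the level of the underlying uniformly elliptic theory of \cite{mjm} this step is essentially formal, so the task is to port it across using the extension operator and the weight comparability. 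Modulo this, the argument retraces \cite[Lemma preceding Corollary]{mjm} (the uniformly elliptic counterpart), and the stated $\lesssim$ with constant $C(d,a_\pm)$ follows.
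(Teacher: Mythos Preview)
Your treatment of $g_T$ via the resolvent formula $(\tfrac1T-\Delta)^{-1}=\int_0^\infty e^{-s/T}e^{s\Delta}\,\dr s$ is fine and essentially equivalent to the paper's (implicit) argument; since $g_T$ solves a constant-coefficient whole-space equation, no perforation enters.

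For $\phi_T^\ext$ your plan and the paper's proof take different routes. The paper does \emph{not} invert anything against the corrector equation. Instead it uses the purely kinematic (PDE-free) semigroup identity $\tfrac{\dr}{\dr t}\E[((\phi_T^\ext)_{*t})^2]=-\E[|(\nabla\phi_T^\ext)_{*t}|^2]$, valid for any stationary function, together with the elementary bound $\max_{[0,T]}|f|\le\int_0^T|f'|+\tfrac2T\bigl|\int_{T/2}^T f\bigr|$. This already produces the first integrand on the right of \eqref{lemma 2 equation 1} and reduces matters to controlling the \emph{tail} $\E[((\phi_T^\ext)_{*t})^2]$ for $t\in[T/2,T]$. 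Only this tail is attacked via the PDE, through the splitting $\phi_T^\ext=\chi_\M\phi_T+(1-\chi_\M)\phi_T^\ext$ and the identity $\chi_\M\phi_T=T\,\nabla\cdot(q_T-\E[q_T])$, which together with $|\nabla G_{t/2}|\lesssim t^{-1/2}G_{t/4}$ yields the second integrand. Your resolvent scheme for $\phi_T^\ext$ is not obviously workable as stated, because $\phi_T^\ext$ does not satisfy an equation of the form $(\tfrac1T-\Delta)\phi_T^\ext=\textrm{(nice RHS)}$; the equation has variable $a$ and the degenerate prefactor $\chi_\M$.

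The genuine gap is in your handling of the perforation. You write that the discrepancy from replacing $\chi_\M$ by $1$ should produce ``errors controlled by the RHS of \eqref{lemma 2 equation 1}''. It does not. The hole contribution $(1-\chi_\M)\phi_T^\ext$, after Jensen and stationarity, gives back $(1+\alpha)\,\E[(1-\chi_\M)(\phi_T^\ext)^2]$, which is of the \emph{same order} as the left-hand side and is certainly not dominated by the right. The paper closes this by an absorption argument: one subtracts $\E[(1-\chi_\M)(\phi_T^\ext)^2]$ from both sides, invokes the extension estimate in the form $c_0\,\E[(\phi_T^\ext)^2]\le\E[\chi_\M(\phi_T^\ext)^2]$, and chooses the Young parameter $\alpha$ small relative to $c_0$ so that the hole term is swallowed by the left. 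This buckling step is precisely the new ingredient over the uniformly elliptic case, and your plan as written does not capture it.
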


\begin{proof}
By the property of convolution with a Gaussian, we have
\begin{equation}
\label{proof lemma 2 equation 0}
    \partial_t (\phi_T^\ext)_{*t}= \frac12 \Delta (\phi_T^\ext)_{*t}\,.
\end{equation}
The latter implies
\begin{equation}
\label{proof lemma 2 equation 1}
\partial_t [((\phi_T^\ext)_{*t})^2]=-| (\nabla\phi_T^\ext)_{*t}|^2+ \nabla \cdot ((\phi_T^\ext)_{*t}(\nabla\phi_T^\ext)_{*t})
\end{equation}
for all $t>0$. Here we simply multiplied \eqref{proof lemma 2 equation 0} by $2\,(\phi_T^\ext)_{*t}$ and used Leibniz's rule. By stationarity, from \eqref{proof lemma 2 equation 1} one obtains
\begin{equation}
\label{proof lemma 2 equation 2}
\frac{\mathrm{d}}{\mathrm{d}t} \E \left[ ((\phi_T^\ext)_{*t})^2\right]=-\E \left[| (\nabla\phi_T^\ext)_{*t}|^2\right].
\end{equation}
Now, any function $f:[a,b]\to \mathbb{R}$ continuous in $[a,b]$ and differentiable in $(a,b)$ satisfies
\begin{equation}
\label{proof lemma 2 equation 7}
\max_{t\in[a,b]}|f(t)|\le \int_a^b |f'(t)| \mathrm{d}t+ \frac{2}{b-a}\left|\int_{\frac{a+b}2}^b f(t)\,\mathrm{d}t \right|\,.
\end{equation}
Applying \eqref{proof lemma 2 equation 7} to the function $[0,T]\ni t\mapsto \E \left[ (\phi_T^\ext)_{*t}^2\right]$ one obtains
\begin{multline}
\label{proof lemma 2 equation 10}
\E \left[ ((\phi_T^\ext)_{*t})^2\right]
\le
\int_0^T \left|\frac{\mathrm{d}}{\mathrm{d}t} \E \left[ (\phi_T^\ext)_{*t}^2\right]\right|\mathrm{d}t
+
\frac2T \int_{\frac{T}2}^T  \E \left[ ((\phi_T^\ext)_{*t})^2\right]\mathrm{d}t
\\
\overset{\eqref{proof lemma 2 equation 2}}{\lesssim }
\int_0^T \E \left[| (\nabla\phi_T^\ext)_{*t}|^2\right]\mathrm{d}t +
 \frac2T \int_{\frac{T}2}^T  \E \left[ ((\phi_T^\ext)_{*t})^2\right]\mathrm{d}t\,.
\end{multline}

The task at hand then reduces to estimating the second integral on the RHS of \eqref{proof lemma 2 equation 10}. To this end, let us write $\phi_T^\ext$ as 
\begin{equation}
\label{proof lemma 2 equation 11}
\phi_T^\ext=\chi_\M\,\phi_T+(1-\chi_\M)\phi_T^\ext
\end{equation}
and, using the ``rob Peter to pay Paul'' inequality $(a+b)^2\le (1+\alpha^{-1})a^2+(1+\alpha)b^2$, decompose the integral in question accordingly:
\begin{multline}
\label{proof lemma 2 equation 12}
\frac2T \int_{\frac{T}2}^T  \E \left[ ((\phi_T^\ext)_{*t})^2\right]\mathrm{d}t
 \le
 \frac2T(1+\alpha^{-1})\int_{\frac{T}2}^T 
\E \left[ ((\chi_\M\,\phi_T)_{*t})^2\right]\mathrm{d}t
\\
+
\frac2T(1+\alpha)\int_{\frac{T}2}^T \E \left[ (((1-\chi_\M)\phi_T^\ext)_{*t})^2\right]\mathrm{d}t
\end{multline}
for all $\alpha>0$.

The second integral in the RHS of \eqref{proof lemma 2 equation 12} can be estimated via
Jensen's inequality in the form $(X_{*t})^2\le (X^2)_{*t}$ combined with stationarity:
\begin{multline}
\label{proof lemma 2 equation 13}
\frac2T(1+\alpha)\int_{\frac{T}2}^T \E \left[ (((1-\chi_\M)\phi_T^\ext)_{*t})^2\right]\mathrm{d}t
\\
\le
\frac2T(1+\alpha)\int_{\frac{T}2}^T \E \left[ ((1-\chi_\M)(\phi_T^\ext)^2)_{*t}\right] \mathrm{d}t
\\
\le
(1+\alpha)\E \left[ (1-\chi_\M)(\phi_T^\ext)^2\right]\,.
\end{multline}
As to the first integral in the RHS of \eqref{proof lemma 2 equation 12}, we argue as follows.
From \eqref{massive corrector} and \eqref{definition qT} we get
\begin{equation}
\label{proof lemma 2 equation 3}
\chi_\M \,\phi_T=T \,\nabla \cdot (q_T-\E[q_T])\,.
\end{equation}
With account of elementary properties of convolution with a Gaussian, \eqref{proof lemma 2 equation 3} implies
\begin{equation}
\label{proof lemma 2 equation 5}
(\chi_\M \phi_T)_{*t}
=
T((\nabla \cdot (q_T-\E[q_T]))_{*\frac{t}2})_{*\frac{t}2}
=
T(\nabla G_{\frac{t}2}* (q_T-\E[q_T])_{*\frac{t}2})
\end{equation}
which, by Jensen's inequality, gives us
\begin{multline}
\label{proof lemma 2 equation 6}
\E \left[ \left((\chi_\M\,\phi_T)_{*t}\right)^2\right]
\le
T^2\underset{\lesssim \frac1t}{\underbrace{\left(\int_{\R^d}\nabla G_{\frac{t}2}\right)^2}}\E \left[ \left|(q_T-\E[q_T])_{*\frac{t}2}\right|^2\right]
\\
\lesssim
\frac{T^2}t\,\E \left[ \left|(q_T-\E[q_T])_{*\frac{t}2}\right|^2\right]\,.
\end{multline}
Substituting \eqref{proof lemma 2 equation 13} and \eqref{proof lemma 2 equation 6} into \eqref{proof lemma 2 equation 12}, and, in turn, the latter into \eqref{proof lemma 2 equation 10} we obtain
\begin{multline}
\label{proof lemma 2 equation 14}
\E \left[ (\phi_T^\ext)^2\right]
\lesssim
\int_0^T \left|\E \left[| (\nabla\phi_T^\ext)_{*t}|^2\right]\right|\mathrm{d}t
\\
+
(1+\alpha^{-1})\int_{\frac{T}2}^T \E \left[ \left|(q_T-\E[q_T])_{*\frac{t}2}\right|^2\right]\mathrm{d}t
+
(1+\alpha)\E \left[ (1-\chi_\M)(\phi_T^\ext)^2\right]\,.
\end{multline}
Subtracting $\E \left[ (1-\chi_\M)(\phi_T^\ext)^2\right]$ from both sides of \eqref{proof lemma 2 equation 14}, we finally arrive at
\begin{multline}
\label{proof lemma 2 equation 15}
\E \left[\chi_\M (\phi_T^\ext)^2\right]
\lesssim
\int_0^T\E \left[| (\nabla\phi_T^\ext)_{*t}|^2\right]\mathrm{d}t
\\
+
(1+\alpha^{-1})\int_{\frac{T}2}^T \E \left[ \left|(q_T-\E[q_T])_{*\frac{t}2}\right|^2\right]
+
\alpha\,\E \left[ (1-\chi_\M)(\phi_T^\ext)^2\right]\,.
\end{multline}

By the property of the extension, there exists $c_0>0$ such that 
$c_0\,\E \left[(\phi_T^\ext)^2\right]\le\E \left[\chi_\M (\phi_T^\ext)^2\right]$. Hence, by choosing $\alpha$ appropriately with respect to $c_0$, \eqref{proof lemma 2 equation 15} gives us \eqref{lemma 2 equation 1}.
\end{proof}

\begin{lemma}
\label{lemma 3}
There exist constants $\epsilon=\epsilon(d,a_\pm)$ and $C=C(d,a_\pm)$ such that
\begin{equation}
\label{lemma 3 equation 1}
\frac{1}{R^d}\int_{\R^d} \left|\partial^\osc_{B_R(x)} (\nabla \phi_T^\ext,q_T)_{*t} \right|^2\,dx \le C  \left(\frac{\sqrt{T}}{\sqrt{t}} \right)^d \left(\min \left\{\frac{R}{\sqrt{T}},1\right\} \right)^{\epsilon d}
\end{equation}
for all $1\le t\le T$ and sufficiently big $R$.
\end{lemma}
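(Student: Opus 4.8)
The plan is to transcribe the sensitivity estimate of \cite{mjm} into the perforated setting, the perforation being handled exactly as in the proofs above; there is one genuinely new ingredient, flagged at the end. By stationarity of the ensemble under translations it suffices to bound the integrand with $(\nabla\phi_T^\ext,q_T)_{*t}$ evaluated at the origin, and I will do so uniformly in $a$.

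\emph{Reduction to a difference equation.} First I would fix $x\in\R^d$ and two admissible coefficient fields $a_1,a_2$ coinciding with $a$ outside $B_R(x)$ (up to inclusions meeting $\partial B_R(x)$), write $\phi_T^{(j)},q_T^{(j)},\M_j$ for the corresponding massive corrector, massive flux and matrix, and set $v:=\phi_T^{(1)}-\phi_T^{(2)}$. Subtracting the two instances of \eqref{massive corrector}, $v$ solves weakly
\[
\chi_{\M_1}\frac1T v-\nabla\cdot a_1\nabla v=-(\chi_{\M_1}-\chi_{\M_2})\frac1T\phi_T^{(2)}+\nabla\cdot\!\big((a_1-a_2)(\nabla\phi_T^{(2)}+\mathbf{e})\big),
\]
with right-hand side supported in $B_R(x)$ and dominated there by $\frac1T|\phi_T^{(2)}|+|\nabla\phi_T^{(2)}+\mathbf{e}|$; moreover $q_T^{(1)}-q_T^{(2)}=a_1\nabla v+(a_1-a_2)(\nabla\phi_T^{(2)}+\mathbf{e})$. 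Thus the quantity whose square is integrated in $x$ splits into the convolution at $0$ of $(\nabla v^\ext,a_1\nabla v)$ and the convolution at $0$ of the $B_R(x)$-supported field $(a_1-a_2)(\nabla\phi_T^{(2)}+\mathbf{e})$.

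\emph{Uniform weighted energy and higher integrability.} Next I would apply Lemma~\ref{prop:omega_T estimate} to the difference equation, with the weight $\omega_T$ of \eqref{exponential weight} (translated to $x$ where convenient), obtaining $\int\omega_T(\frac1T(v^\ext)^2+|\nabla v^\ext|^2)\lesssim\int_{B_R(x)}\omega_T(\frac1T(\phi_T^{(2)})^2+|\nabla\phi_T^{(2)}+\mathbf{e}|^2)$; bounding $\omega_T\lesssim e^{-|y|/(2\kappa\sqrt T)}\omega_{4T}$ on $B_R(x)$ and invoking the uniform estimate \eqref{31 January 2024 equation 6} this is $\lesssim e^{-(|x|-R)_+/(2\kappa\sqrt T)}$. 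The new ingredient is a uniform Meyers-type higher integrability on the localisation scale: there is $\delta=\delta(d,a_\pm)>0$ with $\fint_{B_{\sqrt T}(y)}\big(\frac1T|\phi_T|^2+|\nabla\phi_T+\mathbf{e}|^2+|q_T|^2\big)^{1+\delta/2}\lesssim1$, uniformly in $y$ and $a$, and likewise for $\nabla v^\ext$. I expect this to follow from the Caccioppoli inequality for the degenerate equation with the zeroth-order massive term together with the perforated Hole Filling Lemma~\ref{lemma hole filling} of Appendix~\ref{Appendix hole filling}, run through the Gehring reverse-H\"older scheme, using the Extension Theorem~\ref{theorem extension} to pass between $\M$ and its perforations. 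With $\epsilon:=\delta/(2+\delta)$, H\"older's inequality then upgrades the trivial bound $\int_{B_R(x)}(\frac1T|\phi_T|^2+|\nabla\phi_T+\mathbf{e}|^2)\lesssim(\sqrt T)^d$ to $\lesssim(\sqrt T)^{d(1-\epsilon)}R^{\epsilon d}$ when $R\le\sqrt T$, that is, it yields the gain $(R/\sqrt T)^{\epsilon d}$.

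\emph{From energy to convolution, then the $x$-integral.} Since $G_t\lesssim(\sqrt T/\sqrt t)^d\omega_T$ (checked separately on $|y|\le\sqrt T$ and $|y|>\sqrt T$, where the Gaussian tail dominates the exponential one), Cauchy--Schwarz in the kernel gives $|(\nabla v^\ext)_{*t}(0)|^2\le\int G_t|\nabla v^\ext|^2\lesssim(\sqrt T/\sqrt t)^d\int\omega_T|\nabla v^\ext|^2$, and the same with $a_1\nabla v$; combined with the energy bound and Fubini, the contribution of $v$ to the left-hand side of \eqref{lemma 3 equation 1} is $\lesssim(\sqrt T/\sqrt t)^dR^{-d}\int_{\R^d}e^{-(|x|-R)_+/(2\kappa\sqrt T)}\,dx\lesssim(\sqrt T/\sqrt t)^dR^{-d}\max\{R,\sqrt T\}^d$, which is $(\sqrt T/\sqrt t)^d$ for $R\ge\sqrt T$ and, for $R<\sqrt T$, acquires the missing $(R/\sqrt T)^{\epsilon d}$ upon replacing the crude energy estimate by the localised higher-integrability estimate of the previous step. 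For the $B_R(x)$-supported field I would use Cauchy--Schwarz in $G_t$ followed by H\"older with $G_t\in L^{1/\epsilon}$ against the same higher integrability: $\int_{B_R(x)}G_t(-y)|\nabla\phi_T^{(2)}+\mathbf{e}|^2\,dy\lesssim\|G_t\|_{L^{1/\epsilon}(B_R(x))}(\sqrt T)^{d(1-\epsilon)}$, and integrating in $x$ (the $L^{1/\epsilon}$ norm carrying Gaussian decay away from $B_R(x)$) once more gives $\lesssim(\sqrt T/\sqrt t)^d(\min\{R/\sqrt T,1\})^{\epsilon d}$, the coarser bound $\lesssim(|\nabla\phi_T^{(2)}+\mathbf{e}|^2)_{*t}(0)\lesssim(\sqrt T/\sqrt t)^d$ already covering $R\ge\sqrt T$. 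Adding the two contributions proves \eqref{lemma 3 equation 1}, with $\epsilon=\epsilon(d,a_\pm)$ the exponent of the Gehring iteration.

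\emph{Main obstacle.} Everything apart from the $R\le\sqrt T$ refinement is a routine transcription of \cite{mjm} once the perforated energy and extension tools of this section are in place. The genuinely delicate point is the uniform $\sqrt T$-scale higher integrability of $\phi_T$, $\nabla\phi_T+\mathbf{e}$, $q_T$ and $\nabla v^\ext$: its proof must carry the reverse-H\"older iteration across the perforations while keeping the constant independent of $\omega$ and unaffected by the massive term, which is exactly the role of the Hole Filling Lemma of Appendix~\ref{Appendix hole filling}.
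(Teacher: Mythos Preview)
Your approach has a genuine gap in the small-$R$ regime, and separately takes an unnecessarily complicated route to the $(R/\sqrt T)^{\epsilon d}$ gain.

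\textbf{The gap.} Your third step bounds $|\partial^\osc_{B_R(x)}F_{*t}(0)|^2$ pointwise in $x$ by $(\sqrt T/\sqrt t)^d e^{-(|x|-R)_+/(2\kappa\sqrt T)}$ (up to the hole-filling or Meyers improvement), and then integrates over $x\in\R^d$. The integral of the exponential factor is $\sim\max\{R,\sqrt T\}^d$, so for $R<\sqrt T$ you obtain, after dividing by $R^d$, a bound $(\sqrt T/\sqrt t)^d(\sqrt T/R)^d$. Inserting the Meyers gain turns this into $(\sqrt T/\sqrt t)^d(\sqrt T/R)^{d(1-\epsilon)}$, which \emph{diverges} as $R/\sqrt T\to 0$ rather than vanishing like $(R/\sqrt T)^{\epsilon d}$. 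The problem is structural: when $R\ll\sqrt T$, perturbations at $B_R(x)$ and $B_R(x')$ for $|x-x'|\lesssim\sqrt T$ each produce an $O(1)$ effect at the origin via the localisation scale $\sqrt T$, and your pointwise-then-integrate scheme counts these $\sim(\sqrt T/R)^d$ many contributions independently. No amount of higher integrability of $\nabla\phi_T$ repairs this.

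\textbf{What the paper does instead.} The paper (following \cite{mjm}) first replaces the $x$-integral by a sum over a partition into cubes $Q\in R\mathcal P$ and then invokes $\ell^2$-duality: it bounds $\bigl(\sum_Q\zeta_Q(F_{*t}(a_Q)-F_{*t}(a))\bigr)^2$ for an arbitrary $\ell^2$-sequence $\{\zeta_Q\}$. After Jensen and $G_t\lesssim(T/t)^{d/2}\omega_T$, the square of this linear combination is controlled by $\int\omega_T\bigl|\sum_Q\zeta_Q(F(a_Q)-F(a))\bigr|^2$. Because the perturbations $a_Q-a$ are supported in \emph{disjoint} cubes $Q$, the cross-terms are handled by the weighted energy estimate localised to each $Q$, yielding $\sum_Q\zeta_Q^2\int_Q\omega_T(\ldots)$ rather than $\bigl(\sum_Q|\zeta_Q|\bigr)^2$. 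Duality then gives $\sum_Q|\partial^\osc_QF_{*t}|^2\lesssim(\sqrt T/\sqrt t)^d\sup_Q\int_Q\omega_T(\ldots)$, and the $dz$-integral over $\square_{2R}$ supplies exactly the factor $R^d$ needed on the left. This is the step your argument is missing.

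\textbf{On the $\epsilon d$ gain.} You propose to obtain $\int_{B_R(x)}(\tfrac1T|\phi_T|^2+|\nabla\phi_T+\mathbf e|^2)\lesssim(\sqrt T)^{d(1-\epsilon)}R^{\epsilon d}$ via a perforated Meyers/Gehring reverse-H\"older scheme. This works, but is overkill: the Hole Filling Lemma~\ref{lemma hole filling} (which you even list as an ingredient for Meyers) already gives precisely this bound in the form \eqref{proposition hole filling equation 1}, with no appeal to higher integrability. The paper simply applies hole filling to $\int_Q(\ldots)$ and combines with $\omega_T\lesssim T^{-d/2}$ on $Q$ to get $\int_Q\omega_T(\ldots)\lesssim(R/\sqrt T)^{\epsilon d}$; see \eqref{proof lemma 3 equation 19}--\eqref{proof lemma 3 equation 20}. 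No Gehring iteration across perforations is needed.
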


\begin{proof}
Let $F(a):=(\nabla \phi_T^\ext,q_T)$. 

For all $x\in \R^d$ and $R>1$, definition \eqref{oscillation} implies
\begin{equation}
\label{proof lemma 3 equation 2}
\left|\partial^\osc_{B_R(x)} F(a) \right|\le \left|\partial^\osc_{\square_{2R}(x)} F(a) \right|\,,
\end{equation}
because in the RHS we are computing the oscillation over a larger set of admissible coefficients.
Let 
\begin{equation}
\label{proof lemma 3 equation 3}
\mathcal{P}:=\{z+\square_1 \ |\ z\in \Z^d\}
\end{equation}
be a partition of $\R^d$ into cubes of side $1$.

Then \eqref{proof lemma 3 equation 2} and purely algebraic arguments yield
\begin{multline}
\label{proof lemma 3 equation 4}
\int_{\R^d} \left|\partial^\osc_{B_R(x)} F(a) \right|^2 \,\mathrm{d}x \le \sum_{z\in 2R \Z^d} \int_{z+\square_{2R}}\left|\partial^\osc_{\square_{2R}(x)} F(a) \right|^2\,\mathrm{d}x
\\
=\int_{\square_{2R}} \sum_{Q\in x+2R\mathcal{P}} \left|\partial^\osc_{Q} F(a) \right|^2 \,\mathrm{d}x\,.
\end{multline}

Let $\{\zeta_Q\}_{Q\in R\mathcal{P}}$ be a real sequence and let $a_Q$ be coefficients that coincide with $a$ on shapes not fully contained in $Q$. We have
\begin{multline}
\label{proof lemma 3 equation 10}
\left(\sum_{Q\in R\mathcal{P}}\zeta_Q (F_{*t}(a_Q)-F_{*t}(a)) \right)^2
\overset{\text{Jensen}}{\le}
\left[\Bigl(\sum_{Q\in R\mathcal{P}}\zeta_Q (F(a_Q)-F(a)) \Bigr)^2\right]_{*t}
\\
\overset{G_t\lesssim (T/t)^{d/2}\omega_T}{\lesssim}
\left(\frac{T}{t} \right)^\frac{d}2 \int \omega_T \left(\sum_{Q\in R\mathcal{P}}\zeta_Q (F(a_Q)-F(a)) \right)^2.
\end{multline}
Now, for a compactly supported test function $v$ and each cube $Q$, we have
\begin{multline}
\label{proof lemma 3 equation 100}
\frac1T\int_{Q} \chi_{\M} \left(\phi_T^{\ext}-\phi_{T,Q}^{\ext,Q}\right) v+\int_{Q}a\left(\nabla\phi_T^{\ext}-\nabla\phi_{T,Q}^{\ext,Q}\right)\nabla v
\\
=
-\frac1T\int_{Q} (\chi_{\M}-\chi_{\M_Q}) \phi_{T,Q}^{\ext,Q} v-\int_{Q}(a-a_Q)\left( \nabla\phi_{T,Q}^{\ext,Q}+\mathbf{e}\right)\nabla v\,.
\end{multline}
Let us decompose $\nabla\phi_T^{\ext}-\nabla\phi_{T,Q}^{\ext,Q}$ as
\begin{equation}
\label{proof lemma 3 equation 101}
  \nabla\phi_T^{\ext}-\nabla\phi_{T,Q}^{\ext,Q}=\nabla\phi_T^{\ext}-(\nabla\phi_{T,Q}^{\ext,Q})^\ext+(\nabla\phi_{T,Q}^{\ext,Q})^\ext-\nabla\phi_{T,Q}^{\ext,Q}
\end{equation}
and deal with the two terms in the RHS separately.

Multiplying \eqref{proof lemma 3 equation 100} by $\zeta_Q$ and summing over all cubes we obtain from Lemma~\ref{prop:omega_T estimate}
\begin{multline}
\label{proof lemma 3 equation 102}
     \int \omega_T\left|\sum_{Q\in R\mathcal{P}}\zeta_Q \left(\nabla\phi_T^{\ext}-(\nabla\phi_{T,Q}^{\ext,Q})^\ext\right) \right|^2
     \\
     \lesssim
  \left(\sum_{Q\in R\mathcal{P}}\zeta_Q^2\right) \sup_{Q'\in R\mathcal{P}} \int_{Q'} \omega_T\left( \frac{1}{T}(\phi_{T,{Q'}}^{\ext,{Q'}})^2+ |\nabla\phi_{T,{Q'}}^{\ext,{Q'}}+\mathbf{e}|^2\right).
\end{multline}
As to the second contribution to \eqref{proof lemma 3 equation 101}, it can be estimated directly using the extension property as follows:
\begin{equation}
\label{proof lemma 3 equation 103}
     \int \omega_T\left|\sum_{Q\in R\mathcal{P}}\zeta_Q \left((\nabla\phi_{T,Q}^{\ext,Q})^\ext-\nabla\phi_{T,Q}^{\ext,Q}\right) \right|^2
     \lesssim
  \left(\sum_{Q\in R\mathcal{P}}\zeta_Q^2\right) \sup_{Q'\in R\mathcal{P}} \int_{Q'} \omega_T  |\nabla\phi_{T,{Q'}}^{\ext,{Q'}}+\mathbf{e}|^2.
\end{equation}
Lemma~\ref{lemma hole filling} combined with \eqref{29 January 2024 equation 2} then implies
\begin{equation}
\label{proof lemma 3 equation 19}
\int_Q \left(\frac1T  (\phi_{T,Q}^{\ext,Q})^2
+
\left|\nabla\phi_{T,Q}^{\ext,Q}+\mathbf{e} \right|^2\right)\lesssim \left( \frac{R}{\sqrt{T}} \right)^{\epsilon d} \sqrt{T}^d
\end{equation}
which, in turn, gives us
\begin{equation}
\label{proof lemma 3 equation 20}
\int_Q \omega_T \left(\frac1T  (\phi_{T,Q}^{\ext,Q})^2
+
\left|\nabla\phi_{T,Q}^{\ext,Q}+\mathbf{e} \right|^2\right)\lesssim \left( \frac{R}{\sqrt{T}} \right)^{\epsilon d}
\end{equation}
by definition of $\omega_T$ (see~\eqref{exponential weight}). 

Next, we deal with the component of $F(a)$ involving the flux. 
Since, in view of \eqref{definition qT}, 
\begin{equation}
\label{q_{T,Q}-q_T}
    q_{T,Q}-q_T=(a_Q-a)(\nabla\phi_{T,Q}^{\ext,Q}+\mathbf{e})+a(\nabla \phi_{T,Q}^{\ext,Q}-\nabla \phi_T^{\ext})\,,
\end{equation}
using \eqref{proof lemma 3 equation 102} and \eqref{proof lemma 3 equation 103} we immediately get
\begin{multline}
    \label{proof lemma 3 equation 104}
     \int \omega_T\left|\sum_{Q\in R\mathcal{P}}\zeta_Q \left(q_{T,Q}-q_T\right) \right|^2
     \\
     \lesssim
  \left(\sum_{Q\in R\mathcal{P}}\zeta_Q^2\right) \sup_{Q'\in R\mathcal{P}} \int_{Q'} \omega_T\left( \frac{1}{T}(\phi_{T,{Q'}}^{\ext,{Q'}})^2+ |\nabla\phi_{T,{Q'}}^{\ext,{Q'}}+\mathbf{e}|^2\right).
\end{multline}
Thus, substituting \eqref{proof lemma 3 equation 101} and \eqref{q_{T,Q}-q_T} into \eqref{proof lemma 3 equation 10}, and using \eqref{proof lemma 3 equation 102}, \eqref{proof lemma 3 equation 103}, \eqref{proof lemma 3 equation 104} and~\eqref{proof lemma 3 equation 20}, we arrive at
\begin{multline}
\label{proof lemma 3 equation 21}
\left(\sum_{Q\in R\mathcal{P}}\zeta_Q (F_{*t}(a_Q)-F_{*t}(a)) \right)^2
\\
 \lesssim
 \left(\frac{T}{t} \right)^\frac{d}2 
\left(\sum_{Q\in R\mathcal{P}}
\zeta_Q^2\right) \sup_{Q'\in R\mathcal{P}} \int_{Q'} \omega_T \left(\frac1T  (\phi_{T,{Q'}}^{\ext,{Q'}})^2
+
\left|\nabla\phi_{T,{Q'}}^{\ext,{Q'}}+\mathbf{e} \right|^2\right)
\\
\lesssim
 \left(\frac{\sqrt{T}}{\sqrt{t}} \right)^d \left(\min \left\{\frac{R}{\sqrt{T}},1\right\} \right)^{\epsilon d}\sum_{Q\in R\mathcal{P}}
\zeta_Q^2\,.
\end{multline}

By discrete duality, combining formulae \eqref{proof lemma 3 equation 4} and~\eqref{proof lemma 3 equation 21} one obtains~\eqref{lemma 3 equation 1}.
\end{proof}
	\begin{corollary}
		\label{corollary 6}
		There exists $C=C(d,a_\pm,T_0)$ and $\epsilon=\epsilon(d,a_\pm)$  such that for all $T\geq T_0>1$ we have
		\begin{equation}
			\label{corollary 6 equation 1}
			\E \left[\frac1T (\phi_T^\ext)^2+\frac1T |g_T|^2+|\nabla g_T|^2\right]\le C T^{-\epsilon}\,.
		\end{equation}
	\end{corollary}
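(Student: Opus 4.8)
By Lemma~\ref{lemma 2} the left-hand side of~\eqref{corollary 6 equation 1} is controlled by $\frac1T\int_0^T H_t\,\dr t$, where
\[
H_t:=\E\bigl[|(\nabla\phi_T^\ext)_{*t}|^2\bigr]+\E\bigl[|(q_T-\E[q_T])_{*t/2}|^2\bigr]\, ,
\]
so it suffices to prove $\frac1T\int_0^T H_t\,\dr t\lesssim T^{-\epsilon}$. I would first record two elementary facts. (i) Testing~\eqref{massive corrector equation probability space} with $\phi_T$, together with the extension bounds of Theorem~\ref{theorem extension} and ellipticity, gives $\E[|\nabla\phi_T^\ext|^2]+\E[|q_T|^2]\lesssim 1$ uniformly in $T$; combined with Jensen's inequality $|f_{*s}|^2\le(|f|^2)_{*s}$ and stationarity this yields the crude bound $H_t\lesssim 1$ for all $t>0$. (ii) Since $\overline{\phi_T^\ext}$ is a stationary $W^{1,2}(\Omega)$ function we have $\E[\nabla\phi_T^\ext]=0$, and trivially $\E[q_T-\E q_T]=0$; hence $(\nabla\phi_T^\ext)_{*t}$ and $(q_T-\E q_T)_{*t/2}$ are centred, so $H_t$ equals the sum of their variances.

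The core of the proof is a variance bound via the Multi-scale Spectral Gap. Applying Assumption~\ref{assumption multiscale spectral gap} with $L=1$ to the centred random variable $X_t:=(\nabla\phi_T^\ext)_{*t}(0)$ (and, \emph{mutatis mutandis}, to $(q_T-\E q_T)_{*t/2}(0)$), and then inserting the \emph{deterministic} bound of Lemma~\ref{lemma 3} with $R=(l+1)M$ --- legitimate because the integrand there is bounded for every admissible coefficient field --- one obtains, for $t\ge 1$,
\[
\mathrm{Var}[X_t]\ \le\ \E\!\left[\int_0^{\infty}\!\pi(l)(l+1)^{-d}\!\int_{\R^d}\bigl|\partial^\osc_{B_{(l+1)M}(x)}X_t\bigr|^2\,\dr x\,\dr l\right]\ \lesssim\ M^d\Bigl(\tfrac{\sqrt T}{\sqrt t}\Bigr)^{d}\!\int_0^{\infty}\!\pi(l)\,\Bigl(\min\bigl\{\tfrac{(l+1)M}{\sqrt T},1\bigr\}\Bigr)^{\epsilon d}\,\dr l\, .
\]
Because $\pi$ decays faster than any polynomial, the last integral is dominated by the range $(l+1)M\le\sqrt T$ where the minimum equals $(l+1)M/\sqrt T$; thus it is $\lesssim (M/\sqrt T)^{\epsilon d}\int_0^{\infty}(l+1)^{\epsilon d}\pi(l)\,\dr l\lesssim (M/\sqrt T)^{\epsilon d}$, and we conclude
\[
H_t\ \lesssim\ (\sqrt T)^{d(1-\epsilon)}\,t^{-d/2}\qquad\text{for all }t\ge 1\, ,
\]
the implicit constant depending on $d$, $a_\pm$ and the parameters in Assumption~\ref{assumption multiscale spectral gap}. (For the finitely many $l$ for which $(l+1)M$ lies below the threshold under which Lemma~\ref{lemma 3} is stated, one uses a direct energy estimate for the difference of the massive correctors together with the hole-filling bound~\eqref{proof lemma 3 equation 19}; this produces a term of the same shape and is absorbed.)

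It remains to interpolate the two bounds $H_t\lesssim 1$ and $H_t\lesssim(\sqrt T)^{d(1-\epsilon)}t^{-d/2}$. Splitting the integral at the crossover $t_*\sim T^{1-\epsilon}$ (note $t_*\le T$ once $T\ge T_0$ with $T_0$ chosen large enough), the low-scale part gives $\frac1T\int_0^{t_*}H_t\,\dr t\lesssim t_*/T\lesssim T^{-\epsilon}$, while the high-scale part gives
\[
\frac1T\int_{t_*}^T H_t\,\dr t\ \lesssim\ \frac{(\sqrt T)^{d(1-\epsilon)}}{T}\int_{t_*}^T t^{-d/2}\,\dr t\ \lesssim\
\begin{cases}
T^{-\epsilon}\, , & d\ge 3\, ,\\[2pt]
T^{-\epsilon}\log T\, , & d=2\, ,
\end{cases}
\]
using $\int_{t_*}^{\infty}t^{-d/2}\,\dr t\lesssim t_*^{1-d/2}$ when $d\ge3$. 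Feeding this back into Lemma~\ref{lemma 2} yields~\eqref{corollary 6 equation 1} (in $d=2$ after shrinking $\epsilon$ by an arbitrarily small amount to absorb the logarithm, which is immaterial since $\epsilon$ in the statement is merely \emph{some} constant depending on $d,a_\pm$). I expect the main obstacle to be exactly this interpolation: the spectral-gap estimate is worse than the trivial bound by a power of $T$ when $t\ll T^{1-\epsilon}$ --- the price of the hole-filling loss hard-wired into Lemma~\ref{lemma 3} --- so the clean rate is recovered only by discarding it below $t_*$; a secondary, minor point is the vanishing-mean identity $\E[\nabla\phi_T^\ext]=0$, which is what makes the right-hand side of Lemma~\ref{lemma 2} decay in the first place.
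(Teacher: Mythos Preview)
Your proof is correct and follows essentially the same route as the paper: reduce via Lemma~\ref{lemma 2}, use the trivial Jensen/stationarity bound $H_t\lesssim 1$ on $[0,T^{1-\epsilon}]$, apply the spectral gap together with Lemma~\ref{lemma 3} on $[T^{1-\epsilon},T]$, and absorb the logarithm in $d=2$ by shrinking~$\epsilon$. The only difference is that the paper invokes the \emph{simple} Spectral Gap Inequality~\eqref{spectral gap} with unit balls (so that Lemma~\ref{lemma 3} is applied at the single scale $R\sim 1$), whereas you feed Lemma~\ref{lemma 3} into the full Multi-scale Spectral Gap~\eqref{multiscale spectral gap} and integrate in~$l$; after exploiting the fast decay of~$\pi$ this reproduces exactly the same bound $H_t\lesssim(\sqrt T)^{d(1-\epsilon)}t^{-d/2}$, so the distinction is cosmetic --- though your version is arguably cleaner in that it matches the standing hypothesis of Theorem~\ref{theorem 2}. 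Your explicit remark that $\E[\nabla\phi_T^\ext]=0$ (so that the second moment equals the variance) is a point the paper uses tacitly.
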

	
	\begin{proof}
	
		By Lemma~\ref{lemma 2}, it is enough to estimate the RHS of~\eqref{lemma 2 equation 1}. To this end, we begin by observing that Jensen's inequality implies $|(\nabla\phi_T^\ext)_{*t}|^2\le |\nabla\phi_T^\ext|^2_{*t}$. 
		Furthermore, we have 
		$$ \left|(q_T-\E[q_T])_{*\frac{t}2}\right|^2 \leq 
		\left|q_T-\E[q_T]\right|^2_{*\frac{t}{2}} \lesssim |\nabla\phi_T^\ext|^2_{*\frac{t}{2}}+\E\left[|\nabla\phi_T^\ext|^2\right]. $$
		Therefore, by stationarity we get
		\begin{multline}
			\label{proof corollary 6 equation 1}
			\frac1T\int_0^{T^{1-\epsilon}}  \left(\E \left[|(\nabla\phi_T^\ext)_{*t}|^2\right] + \E \left[ \left|(q_T-\E[q_T])_{*\frac{t}2}\right|^2\right]\right) \dr t
			\lesssim
			T^{-\epsilon}
			\E\left[|\nabla \phi_T^\ext|^2\right]
			\lesssim 
			T^{-\epsilon}\,.
		\end{multline}
		
		Using the Spectral Gap Inequality~\eqref{spectral gap}, we can control $\E \left[|(\nabla\phi_T^\ext)_{*t}|^2\right]$ by means of its oscillation:
		\begin{equation}
			\label{proof corollary 6 equation 2}
			\E \left[|(\nabla\phi_T^\ext)_{*t}|^2\right]\le \int_{\R^d}\left|\partial^\osc_{B_1(x)} (\nabla \phi_T^\ext)_{*t} \right|^2\,\dr x.
		\end{equation}
		Hence, for $d>2$  Lemma~\ref{lemma 3} implies 
		\begin{multline}
			\label{proof corollary 6 equation 3}
			\frac1T\int_{T^{1-\epsilon}}^T \E \left[|(\nabla\phi_T^\ext)_{*t}|^2\right] \dr t 
			\lesssim
			\frac1T\int_{T^{1-\epsilon}}^T \int_{\R^d}\left|\partial^\osc_{B_1(x)} (\nabla \phi_T^\ext)_{*t}\right|^2\,\dr x
			\\
			\lesssim
			\frac1T\int_{T^{1-\epsilon}}^T 
			\left(\frac{\sqrt{T}}{\sqrt{t}} \right)^d \sqrt{T}^{-\epsilon d} \,\dr t
			\\
			\lesssim  T^{-1+\frac{d}2(1-\epsilon)}
			\int_{T^{1-\epsilon}}^T t^{-\frac{d}2} \,\dr t
			\\
			\lesssim
			T^{-1+\frac{d}2(1-\epsilon)}
			(T^{(-\frac{d}2+1)(1-\epsilon)}-T^{-\frac{d}2+1})
			\\
			=
			T^{-\epsilon}-T^{-\epsilon\frac{d}2}\,.
		\end{multline} 
	For $d=2$ one decreases the value $\epsilon$ to re-absorb the logarithm arising from the integration.  
		
		Combining \eqref{proof corollary 6 equation 2} and \eqref{proof corollary 6 equation 3} with Lemma~\ref{lemma 2} we arrive at \eqref{corollary 6 equation 1}.
\end{proof}

\begin{lemma}[Concentration for averages {\cite[Proposition~4.3]{DG1}}]\label{lemma 4}
    Suppose Assumption~\ref{assumption multiscale spectral gap} is satisfied.   
    Let $t\ge 1$ and let $F_t$ denote a bounded random variable
		that is approximately $\sqrt{t}$-local in the sense of \eqref{31 January 2024 prop equation 2}\footnote{Note that $t$ here plays the role of what was denoted by $T$ in Proposition~\ref{prop:F_t local}.}.
		Then there exists a positive constant $C=C(d,a_\pm,\pi)$ such that for 
		all $\delta>0$ and $T\geq t$ one has
		\begin{equation}\label{lemma 4 equation 1}
			\Pro\left(\left|\int \omega_T(x)F_t(a(\cdot+x))\,dx-\E (F_t)\right|>\delta\right)\, \le \, \exp\left(-\frac{\min\{\delta,\delta^2\}} {C} \big(\frac{\sqrt{T}}{\sqrt{t}}\big)^{\min\{d/2,\beta\}}\right).
		\end{equation}
If Assumption~\ref{assumption spectral gap} is satisfied, then one can take $\beta=\infty$ in \eqref{lemma 4 equation 1}.
	\end{lemma}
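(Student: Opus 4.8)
The plan is to recognise this as \cite[Proposition~4.3]{DG1} and to check that all of its hypotheses are available in our framework; the concentration argument itself then needs no change. The probabilistic input is exactly Assumption~\ref{assumption multiscale spectral gap} (or Assumption~\ref{assumption spectral gap} when we want $\beta=+\infty$); the approximate $\sqrt{t}$-locality needed there is precisely \eqref{31 January 2024 prop equation 2}, supplied by Proposition~\ref{prop:F_t local}; and the almost sure boundedness of $F_t$ follows from Lemma~\ref{lemma corrector bounded on balls} via the defining formula \eqref{31 January 2024 prop equation 1}. I would stress at the outset that the perforated nature of the problem is immaterial here: the randomness is carried entirely by the coefficient field $a$, the matrix $\M_\omega$ being a function of $a$, so the abstract statement of \cite{DG1} transfers verbatim. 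For completeness I would recall the mechanism.

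First I would set $G_T(a):=\int_{\R^d}\omega_T(x)F_t(a(\,\cdot\,+x))\,\dr x$ and note that, since $\int_{\R^d}\omega_T=1$ and $F_t$ is stationary, $\E[G_T]=\E[F_t]$, so \eqref{lemma 4 equation 1} concerns the fluctuations of $G_T$. The next step would be to estimate the multiscale oscillation of $G_T$: modifying $a$ inside $B_{(l+1)M}(y)$ modifies $a(\,\cdot\,+x)$ inside $B_{(l+1)M}(y-x)$, so by \eqref{31 January 2024 prop equation 2} the value $F_t(a(\,\cdot\,+x))$ moves by at most $C\min\{1,\exp(-(|y-x|-(l+1)M)_+/(C\sqrt{t}))\}$. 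Integrating this against $\omega_T(x)\,\dr x$ and using that $\omega_T$ (recall \eqref{exponential weight}) varies only by a bounded factor on the scale $\sqrt{t}\le\sqrt{T}$, I would obtain a pointwise bound on $|\partial^\osc_{B_{(l+1)M}(y)}G_T|$ and hence, after squaring, integrating in $y$, and integrating against $\pi(l)(l+1)^{-d}\,\dr l$, a bound on the right-hand side of \eqref{multiscale spectral gap} with $F=G_T$ that is small and scales like a negative power of $\sqrt{T}/\sqrt{t}$, uniformly in $a$; here the decay of $\pi$ is what cuts off the contribution of large scales $l$.

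Finally I would run a Herbst-type argument: apply \eqref{multiscale spectral gap} to suitably truncated exponential functions of $G_T-\E[G_T]$, use the chain rule $|\partial^\osc_Q\zeta(X)|\le\|\zeta'\|_\infty|\partial^\osc_Q X|$ together with the estimate of the previous step to obtain a functional inequality for the Laplace transform $\lambda\mapsto\E[e^{\lambda(G_T-\E[G_T])}]$, and iterate it along a dyadic sequence of values of $\lambda$, in the spirit of Bobkov--Ledoux, to reach stretched-exponential moment bounds; Markov's inequality then yields \eqref{lemma 4 equation 1}, the presence of $\min\{\delta,\delta^2\}$ reflecting the cross-over between the Gaussian regime (small $\lambda$) and the purely exponential one (large $\lambda$). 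When only the ordinary Spectral Gap \eqref{spectral gap} is assumed, $\pi$ has compact support, the large-scale contribution in the oscillation estimate is absent, and the same scheme delivers the claim with $\beta=+\infty$. The one genuinely delicate ingredient is this last iteration, which must be organised so as to extract exactly the exponent $\min\{d/2,\beta\}$ and to interface the weight $\pi$ with the Laplace transform at every scale; rather than reproduce it, I would import it from \cite[Section~4]{DG1}, everything else above being soft and requiring only the locality property \eqref{31 January 2024 prop equation 2} and elementary manipulations of the Gaussian-type weight $\omega_T$.
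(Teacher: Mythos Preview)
Your proposal is correct in spirit, but there is nothing to compare it against: the paper does not prove Lemma~\ref{lemma 4} at all. The lemma is stated as a direct citation of \cite[Proposition~4.3]{DG1} and is used as a black box in the proof of Theorem~\ref{theorem 2}. Your sketch---verifying that the hypotheses (boundedness, approximate $\sqrt{t}$-locality, multiscale spectral gap) are in place and then outlining the Herbst-type iteration from \cite{DG1}---is more than the paper itself provides, and is an accurate summary of the mechanism behind that reference. One minor remark: in the abstract statement of the lemma, boundedness of $F_t$ is a \emph{hypothesis}, not something to be derived; your appeal to Lemma~\ref{lemma corrector bounded on balls} is relevant only when the lemma is \emph{applied} to the concrete $F_t$ of \eqref{31 January 2024 prop equation 1}, not for the lemma itself.
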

\igor
Finally, we are in a position to prove our main result.

\begin{proof}[Proof of Theorem~\ref{theorem 2 equation 1}]
		The claim can be proved by retracing, modulo minor natural adaptations, the proof of~\cite[Theorem 2]{mjm} using all the lemmata and propositions above for perforated domains, which replace in the arguments those obtained in \cite{mjm} for uniformly elliptic problems. We will explain the idea of the proof. By using Proposition \ref{proposition 3} one can control $r_{**}$ with the right hand side of \eqref{propositon 3 equation 1}. On the other hand the right hand side of \eqref{propositon 3 equation 1} has exponentially decaying tails as a consequence of Lemma  \ref{lemma 4} after we subtract $\E[F_t]$.  On the other hand, by using Corollary \ref{corollary 6} we see that $\E[F_t]$ can be neglected for $t$ large enough $1<t\leq T=R^2$. 
    \end{proof}
    
The next proposition supplies a sensitivity estimate that is a crucial ingredient in the proof of Theorem~\ref{bella kniely theorem 1.13}. The proof of the upcoming proposition is an appropriate adaptation to our setting of \cite[Theorem 1.13]{bella_kniely}. Particular care is needed due to the fact that we need to work with extension of the corrector. 
\bbb 
\begin{proposition}
\label{bella kniely proposition 1.12}
\label{proposition 1.12}
Let
\begin{equation}
\label{proposition 1.12 equation 1}
F\nabla \phi=\int g \cdot \nabla\phi^\ext\,,
\end{equation}
where $g:\mathbb{R}^d\to \mathbb{R}^d$ is a smooth averaging function compactly supported in $B_r$, $\left( \int |g|^2\right)^{1/2}\lesssim  r^{-d/2}$.
Then we have
\begin{equation}
\label{proposition 1.12 equation 2}
\int_{\R^d} \left|\partial^\osc_{B_M(x)} F\nabla\phi(a) \right|^2 \,\dr x\le C \left(\frac{(r+r_*)^{1-\epsilon}}{r}\right)^d\,.
\end{equation}
\end{proposition}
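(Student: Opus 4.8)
The plan is to follow the scheme of \cite[Theorem~1.13]{bella_kniely}, keeping careful track of the extension into the perforations. First I would reduce to a single perturbation: since the admissible class in \eqref{oscillation} is symmetric, $|\partial^\osc_{B_M(x)}F\nabla\phi|\le 2\sup_{a'}|F\nabla\phi(a')-F\nabla\phi(a)|$, where $a'$ ranges over coefficient fields agreeing with $a$ off $B_M(x)$ (and on the inclusions meeting $\partial B_M(x)$). Writing $\phi'$ for the corrector attached to $a'$ and $v:=\phi'-\phi$, subtracting the two corrector equations and absorbing the discrepancy between the perforations $\M$ and $\M'$ via the extension operator of Theorem~\ref{theorem extension}, one sees that (a suitable extension of) $v$ solves a degenerate divergence-form equation $-\nabla\cdot a'\nabla v=\nabla\cdot h_x$ whose source $h_x$ is supported in $B_M(x)$ with $|h_x|\lesssim|\nabla\phi^\ext+\mathbf{e}|\,\chi_{B_M(x)}$; the energy estimate then gives $\int_{\R^d}\chi_{\M'}|\nabla v|^2\lesssim E(x)$, where $E(x):=\int_{B_M(x)}\chi_\M|\nabla\phi+\mathbf{e}|^2$. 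The quantity to be estimated is then $F\nabla\phi(a')-F\nabla\phi(a)=\int g\cdot\nabla v^\ext$, an integral over $B_r$.

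The heart of the proof is a pointwise-in-$x$ bound on this integral that simultaneously captures decay in $|x|$ and produces the gain $\epsilon$. When $B_M(x)$ meets $B_r$, I would exploit that $g$ is an averaging function, pairing $\int g\cdot\nabla v^\ext$ against the oscillation of $v^\ext$ over $B_r$, and then invoke the perforated Hole Filling Lemma~\ref{lemma hole filling} for the $a'$-harmonic function $v$ on the annuli centred at $x$: this propagates the localised energy $E(x)$ outwards with a power, yielding an estimate of the form $|F\nabla\phi(a')-F\nabla\phi(a)|^2\lesssim M^{\epsilon d}\,r^{-d-\epsilon d}\,E(x)$. When $B_M(x)$ is far from $B_r$, $v$ is $a'$-harmonic in a large ball about the origin, and I would combine Cauchy--Schwarz in $g$ with the Hole Filling Lemma (on scales below $r_*$) and with the large-scale regularity of Proposition~\ref{theorem 1} --- the excess decay \eqref{theorem 1 equation 2} together with the sublinearity/decay of $v$ --- on scales above $r_*$, so that the resulting contribution decays fast enough in $|x|$ for the subsequent summation to converge.

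The second ingredient is the energy bound carrying the exponent $1-\epsilon$. From \eqref{theorem 1 equation 3} with $\xi=\mathbf{e}$ one has $\int_{B_\rho}\chi_\M|\nabla\phi+\mathbf{e}|^2\lesssim\rho^d$ for $\rho\ge r_*$; feeding this through the Hole Filling Lemma over the range of scales between $r$ and $r_*$ (when $r<r_*$) gives $\int_{B_{Cr}}\chi_\M|\nabla\phi+\mathbf{e}|^2\lesssim r^{\epsilon d}(r+r_*)^{d(1-\epsilon)}$. Since by Fubini $\int_{\{|x|\lesssim r\}}E(x)\,dx\lesssim M^d\int_{B_{Cr}}\chi_\M|\nabla\phi+\mathbf{e}|^2$, the near part of $\int_{\R^d}|\partial^\osc_{B_M(x)}F\nabla\phi|^2\,dx$ is then $\lesssim M^{(1+\epsilon)d}\bigl((r+r_*)^{1-\epsilon}/r\bigr)^d$, which is the claimed bound; a dyadic decomposition into shells $|x|\sim 2^k$, $2^k\gtrsim r+r_*$, handles the far part, where the extra decay of $v$ furnished by Proposition~\ref{theorem 1} makes the geometric series converge and be dominated by the near part.

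I expect the main obstacle to be the bookkeeping forced by the perforations: $v$ lives on all of $\R^d$ but the equations only ``see'' $\M'$, the perforations of $a$ and $a'$ genuinely differ inside $B_M(x)$, and one has to pass repeatedly between $\nabla v|_{\M'}$ and $\nabla v^\ext$, and between the two extension operators attached to $\M$ and to $\M'$, controlling the overheads uniformly through Theorem~\ref{theorem extension}. On top of that, the exponent $1-\epsilon$ only materialises if the Hole Filling gain is threaded consistently through all scales below $r_*$ while large-scale regularity is used above $r_*$; keeping these two mechanisms compatible --- and all the dyadic sums convergent --- is the delicate point.
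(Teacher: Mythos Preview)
Your overall architecture --- reduce to a single perturbation at $y$, write the equation for the difference $v=\phi_y-\phi$, use the extension operator to mediate between $\M$ and $\M_y$, and extract the exponent $1-\epsilon$ from hole filling on the corrector itself --- is sound and matches the paper. But the mechanism you propose for the near part contains a real gap.

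Your claimed pointwise bound $|F\nabla\phi(a')-F\nabla\phi(a)|^2\lesssim M^{\epsilon d}r^{-d-\epsilon d}E(x)$ when $B_M(x)$ meets $B_r$ cannot be obtained by ``hole filling for the $a'$-harmonic function $v$ on annuli centred at $x$''. Hole filling goes the other way: it bounds the energy in a \emph{small} ball by a power times the energy in a larger one, it does not propagate a localised source outward with a gain. If the perturbation sits at $x$ well inside $B_r$, then $v$ has its source in $B_M(x)$ and no cancellation in $g$ is available; Cauchy--Schwarz against $g$ gives only $|\int g\cdot\nabla v^\ext|^2\lesssim r^{-d}\int_{B_r}|\nabla v^\ext|^2\lesssim r^{-d}E(x)$, with no extra factor $r^{-\epsilon d}$. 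Integrating this over $\{|x|\lesssim r\}$ in the regime $r\ge r_*$ then produces a bound of order $1$, whereas the target is $r^{-\epsilon d}$. So the near part of your scheme, as written, does not close.

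The paper avoids a pointwise-in-$x$ splitting altogether. The decay in $|y|$ is captured by a whole-space \emph{weighted} energy estimate for the auxiliary equation, namely
\[
\int_{B_r}|\nabla u^\ext|^2+\int\Bigl(\tfrac{|x|}{r}+1\Bigr)^{-\gamma}|\nabla u^\ext|^2\ \lesssim\ \int\Bigl(\tfrac{|x|}{r}+1\Bigr)^{-\gamma'}|g|^2,\qquad 0<\gamma'<\gamma<d,
\]
for $-\nabla\cdot a\nabla u=\nabla\cdot(\chi_\M g)$ (cf.\ \cite[Corollary~3]{GNOV2}). One then runs an $\ell^2$-duality argument over $y\in\Z^d$: one bounds $\bigl(\sum_y c_y(F\nabla\phi-F\nabla\phi_y)\bigr)^2$ by the weighted energy of $\sum_y c_y(a-a_y)(\nabla\phi_y^{\ext,y}+\mathbf e)$, which --- because the perturbations are disjointly supported --- splits into $\sum_y c_y^2\,(\tfrac{|y|}{r}+1)^{-\gamma''}\int_{B_M(y)}|\nabla\phi^\ext+\mathbf e|^2$. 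The $\epsilon$ is produced \emph{only} by hole filling on the original corrector, via $\int_{B_M(y)}|\nabla\phi^\ext+\mathbf e|^2\lesssim(|y|+r_*+1)^{d(1-\epsilon)}$, and the choice $\gamma''=d(1-\epsilon)$ balances this growth against the weight. No near/far dichotomy, and no appeal to the large-scale regularity of Proposition~\ref{theorem 1}, is needed; invoking the latter for $v$ would in any case be delicate, since $r_*$ is attached to $a$ and not to the perturbed field $a_y$.
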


\begin{proof}
The proof retraces that of \cite[Proposition~1.12]{bella_kniely}. In what follows we will adopt the structure of the latter proof, elaborating in detail on the modifications required and sketchily recalling the rest. For the sake of simplicity, we will put $M=1$.

Let us first suppose that $r>r_*$.

\

Step 1 of \cite[Proof of Proposition~1.12]{bella_kniely} is essentially hole filling, which we have proved in our setting in Lemma~\ref{lemma hole filling} for the massive corrector $\phi_T$. The argument for the corrector $\phi$ is analogous and gives us
\begin{equation}
\label{proof of proposition 1.12 equation 1}
\int_{B_r(x)} |\nabla \phi^\ext+\mathbf{e}|^2\lesssim \left(\frac{r}{R}\right)^{\epsilon d}\int_{B_R(x)} |\nabla \phi^\ext+\mathbf{e}|^2.
\end{equation}

With regards to Step 2, the estimate
\begin{equation}
\label{proof of proposition 1.12 equation 2}
\fint_{B_R}|\nabla\phi^\ext+\mathbf{e}|^2 \lesssim 1 \qquad \text{for all }R\ge r_*+1
\end{equation}
follows from the sublinearity of the corrector, see Proposition~\ref{theorem 1}. Furthermore, by closely adapting the arguments from \cite[Proof of Corollary 3]{GNOV2}, it is not hard to show that, for $r\ge r_*$, we have
\begin{equation}
\label{proof of proposition 1.12 equation 3}
\int_{B_r}|\nabla u^\ext|^2\lesssim \int \left(\frac{|x|}{r}+1\right)^{-\gamma} |g|^2 \qquad \text{for all\ } 0<\gamma<d\,,
\end{equation}
and its generalisation
\begin{equation}
\label{proof of proposition 1.12 equation 3 generalisation}
\int\left(\frac{|x|}{r}+1\right)^{-\gamma} |\nabla u^\ext|^2\lesssim \int \left(\frac{|x|}{r}+1\right)^{-\gamma'} |g|^2 \qquad \text{for all\ } 0<\gamma'<\gamma<d\,,
\end{equation}
where $u$ is related to $g$ via the equation $-\nabla \cdot a \nabla u=\nabla\cdot (\chi_\M g)$ understood in a weak sense.

Let us then move to Step 3.
Let us observe that
\begin{enumerate}[(i)]
\item
$\left|\frac{\partial F\nabla\phi}{\partial^\mathrm{osc}_y a}(a) \right|^2 \lesssim 2 \sup_{a_y}|F\nabla\phi-F\nabla\phi_y|^2$;

\item
the oscillation over a set is bounded above by the oscillation over any larger set that contains it.
\end{enumerate}
Then
\begin{equation}
\label{proof of proposition 1.12 equation 3.1}
\int_{\R^d} \left|\frac{\partial F\nabla\phi}{\partial^\mathrm{osc}_x a}(a) \right|^2 \,\dr x
\lesssim
\sum_{y\in \Z^d}\left|\frac{\partial F\nabla\phi}{\partial^\mathrm{osc'}_y a}(a) \right|^2.
\end{equation}
where $\osc'$ indicates that we are taking the oscillations over (larger) balls of radius $3$. Hence, it suffices to estimate the RHS of \eqref{proof of proposition 1.12 equation 3.1}.

Let us denote by $a_y$ coefficients that may differ from $a$ in a ball of radius $3$ centred at $y\in \R^d$, and denote by $\phi_y$ the corrector corresponding to $a_y$. Then \eqref{corrector equation} implies
\begin{equation}
\label{proof of proposition 1.12 equation 4}
-\nabla \cdot a \nabla(\phi-\phi_y^{\ext,y})=\nabla \cdot (a-a_y)(\nabla \phi_y^{\ext,y}+\mathbf{e})\,,
\end{equation}
\begin{equation}
\label{proof of proposition 1.12 equation 5}
-\nabla \cdot a_y \nabla(\phi_y-\phi^\ext)=\nabla \cdot (a_y-a)(\nabla \phi^{\ext}+\mathbf{e})\,.
\end{equation}
Here $(\,\cdot\,)^{\ext,y}$ denotes the extension inside the perforation generated by $a_y$, where $(\,\cdot\,)^{\ext}$ is the extension with respect to the perforation generated by the original coefficient $a$.
The RHS's of equations \eqref{proof of proposition 1.12 equation 4} and \eqref{proof of proposition 1.12 equation 5} are supported in $B_{3}(y)$; thus, by \eqref{proof of proposition 1.12 equation 3} we get
\begin{equation}
\label{proof of proposition 1.12 equation 6}
\int_{B_r} |\nabla[(\phi-\phi_y^{\ext,y})]^{\ext}|^2 \lesssim \sup_{x\in B_{3}(y)}|a-a_y|\int_{B_{3}(y)} \left(\frac{|x|}{r} +1\right)^{-\gamma}|\nabla \phi_y^{\ext,y}+\mathbf{e}|^2\,\dr x\,,
\end{equation}
\begin{equation}
\label{proof of proposition 1.12 equation 7}
\int_{\R^d} |\nabla[(\phi_y-\phi^{\ext})]^{\ext,y}|^2 \lesssim \sup_{x\in B_{3}(y)}|a-a_y|\int_{B_{3}(y)} |\nabla \phi^{\ext}+\mathbf{e}|^2\,\dr x\,,
\end{equation}
and by \eqref{proof of proposition 1.12 equation 3 generalisation} we get
\begin{equation}
\label{proof of proposition 1.12 equation 6bis}
\int \left(\frac{|x|}{r} +1\right)^{-\gamma}|\nabla[(\phi-\phi_y^{\ext,y})]^{\ext}|^2 \lesssim \sup_{x\in B_{3}(y)}|a-a_y|\int_{B_{3}(y)} 
\left(\frac{|x|}{r} +1\right)^{-\gamma'}|\nabla \phi_y^{\ext,y}+\mathbf{e}|^2\,\dr x\,.
\end{equation}

Formula \eqref{proof of proposition 1.12 equation 4} implies that for any $\{c_y\}_{y\in\Z^d}$ we have
\begin{equation}
\label{temp3}
-\nabla \cdot a \nabla\sum_{y\in\Z^d}c_y(\phi-\phi_y^{\ext,y})=\nabla \cdot \sum_{y\in\Z^d}c_y (a-a_y)(\nabla \phi_y^{\ext,y}+\mathbf{e})\,.
\end{equation}
Then, using \eqref{proof of proposition 1.12 equation 3} and \eqref{proof of proposition 1.12 equation 3 generalisation}, from \eqref{temp3} we get
\begin{multline}
\label{temp2}
    \int_{B_r} \left|\nabla \sum_{y\in\Z^d}c_y[\phi-\phi_y^{\ext,y}]^\ext \right|^2+ \int \left(\frac{|x|}{r} +1\right)^{-\gamma}\left|\nabla \sum_{y\in\Z^d}c_y[\phi-\phi_y^{\ext,y}]^\ext \right|^2
    \\
    \lesssim 
    \int \left(\frac{|x|}{r} +1\right)^{-\gamma'}\left|
    \sum_{y\in\Z^d}c_y (a-a_y)(\nabla \phi_y^{\ext,y}+\mathbf{e})
    \right|^2
    \\
    \lesssim
    \sum_{y\in\Z^d}c_y^2 \sup_{x\in B_3(y)}|a-a_y|^2 \int_{B_{3}(y)} \left(\frac{|x|}{r} +1\right)^{-\gamma'}|\nabla \phi_y^{\ext,y}+\mathbf{e}|^2\,.
\end{multline}

\

Straightforward algebraic manipulations yield
\begin{equation}
  \label{temp1}
  a(\nabla\phi+\mathbf{e})-a_y(\nabla\phi_y^{\ext,y}+\mathbf{e})=a\nabla(\phi-\phi_y^{\ext,y})+(a-a_y)(\nabla\phi_y^{\ext,y}+\mathbf{e}),
\end{equation}
which, combined with \eqref{temp2}, leads to
\begin{multline}
\label{temp5}
   \int_{B_r} \left| \sum_{y\in\Z^d}c_y\nabla[\phi-\phi_y^{\ext,y}]^\ext \right|^2+ \int \left(\frac{|x|}{r} +1\right)^{-\gamma}\left| \sum_{y\in\Z^d}c_y[a(\nabla\phi+\mathbf{e})-a_y(\nabla\phi_y^{\ext,y}+\mathbf{e})] \right|^2
      \\
    \overset{\eqref{temp2},\, \gamma'<\gamma}{\lesssim}
    \sum_{y\in\Z^d}c_y^2 \sup_{x\in B_3(y)}|a-a_y|^2 \int_{B_3(y)} \left(\frac{|x|}{r} +1\right)^{-\gamma'}|\nabla \phi_y^{\ext,y}+e|^2\,.
\end{multline}

Now, by definition of $F$ we have
\begin{equation}
\label{proof of proposition 1.12 equation 8}
|F\psi|=\left|\int g\cdot \psi \right|\le \left( \int_{\operatorname{supp} g} |\psi|^2 \right)^{1/2} \left( \int |g|^2\right)^{1/2} 
\lesssim 
\left( \fint_{B_r} |\psi|^2 \right)^{1/2}
\end{equation}
where the last estimate is a consequence of the fact that $g$ is an averaging function, i.e., $\left( \int |g|^2\right)^{1/2}\lesssim r^{-d/2}$.


Then for any $\{c_y\}_{y\in\Z^d}$ we have
\begin{multline}
\label{proof of proposition 1.12 equation 9new}
\left|\sum_{y\in \Z^d}c_y (F\nabla\phi^\ext-F\nabla\phi_y^{\ext,y})\right|^2=\left|\sum_{y\in \Z^d}c_y [F(\nabla\phi^\ext-(\nabla\phi_y^{\ext,y})^\ext)+F(\nabla\phi_y^{\ext,y})^\ext-F\nabla\phi_y^{\ext,y})]\right|^2
\\
\lesssim
\left|\sum_{y\in \Z^d}c_y F(\nabla\phi^\ext-(\nabla\phi_y^{\ext,y})^\ext)\right|^2
+
\left|\sum_{y\in \Z^d}c_y [F((\nabla\phi_y^{\ext,y})^\ext+\mathbf{e})-F(\nabla\phi_y^{\ext,y}+\mathbf{e})]\right|^2\,.
\end{multline}
The first sum in the RHS of \eqref{proof of proposition 1.12 equation 9new} can be estimated using \eqref{proof of proposition 1.12 equation 8} to obtain
\begin{multline}
\left|\sum_{y\in \Z^d}c_y F(\nabla\phi^\ext-(\nabla\phi_y^{\ext,y})^\ext)\right|^2
\lesssim
\fint_{B_r} \left| \sum_{y\in\Z^d}c_y\nabla[\phi-\phi_y^{\ext,y}]^\ext \right|^2
\\
\overset{\eqref{temp5}}{\lesssim}
  r^{-d}\sum_{y\in\Z^d}c_y^2 \sup_{x\in B_3(y)}|a-a_y|^2 \int_{B_3(y)} \left(\frac{|x|}{r} +1\right)^{-\gamma'}|\nabla \phi_y^{\ext,y}+\mathbf{e}|^2\,.
\end{multline}
As to the second sum in the RHS of \eqref{proof of proposition 1.12 equation 9new}, we have
\begin{multline}
    \label{temp6}
    \left|\sum_{y\in \Z^d}c_y [F((\nabla\phi_y^{\ext,y})^\ext+\mathbf{e})-F(\nabla\phi_y^{\ext,y}+\mathbf{e})]\right|^2
    \\
    =
     \left|\sum_{y\in \Z^d}c_y [F(\chi_{B_3(y)}((\nabla\phi_y^{\ext,y})^\ext+\mathbf{e}))-F(\chi_{B_3(y)}(\nabla\phi_y^{\ext,y}+\mathbf{e}))]\right|^2
    \\
    \lesssim
     r^{-d}  \sum_{y\in\Z^d}c_y^2 \sup_{x\in B_3(y)}|a-a_y|^2 \left(
       \int_{B_r}\chi_{B_3(y)}|(\nabla\phi_y^{\ext,y})^\ext+\mathbf{e}|^2
+
\int_{B_r}\chi_{B_3(y)}|\nabla\phi_y^{\ext,y}+\mathbf{e}|^2
       \right)
       \\
       \lesssim
     r^{-d}  \sum_{y\in\Z^d}c_y^2 \sup_{x\in B_3(y)}|a-a_y|^2 
\int_{B_{r+1}}\chi_{B_{4}(y)}|\nabla\phi_y^{\ext,y}+\mathbf{e}|^2
        \\
       \lesssim
     r^{-d}  \sum_{y\in\Z^d}c_y^2 \sup_{x\in B_3(y)}|a-a_y|^2 
\int_{B_{4}(y)}\left(\frac{|x|}{r} +1\right)^{-\gamma'}|\nabla\phi_y^{\ext,y}+\mathbf{e}|^2
      \,,
\end{multline}
where in the last step we used that $1\lesssim \left(\frac{|x|}{r} +1\right)$ in $B_{r+1}$.

By an $\ell^2$ duality argument, we conclude from \eqref{proof of proposition 1.12 equation 9new}--\eqref{temp6} that
\begin{multline}
    \label{temp7}
   r^d\sum_{y\in\Z^d} |F\nabla\phi^\ext-F\nabla\phi_y^{\ext,y}|^2 \\\lesssim 
  \sup_{y\in\Z^d}  \left(\sup_{x\in B_3(y)}|a-a_y|^2
\int_{B_4(y)}\left(\frac{|x|}{r} +1\right)^{-\gamma'}|\nabla\phi_y^{\ext,y}+\mathbf{e}|^2
       \right)
       \\
        \lesssim
      \sup_{y\in\Z^d}  \left(\sup_{x\in B_3(y)}|a-a_y|^2\left(\frac{|y|}{r} +1\right)^{-\gamma'}
\int_{B_4(y)} |\nabla\phi_y^{\ext,y}+\mathbf{e}|^2
       \right).
\end{multline}
Resorting to the inequality \eqref{proof of proposition 1.12 equation 7}, we obtain from \eqref{temp7} that
\begin{multline}
\label{temp9}
   r^{d} \int_{\R^d} \left|\frac{\partial F\nabla\phi}{\partial^\mathrm{osc}_x a}(a) \right|^2 \,\dr x
       \lesssim
      \sup_{y\in\Z^d}  \left(\sup_{x\in B_3(y)}|a-a_y|^2\left(\frac{|y|}{r} +1\right)^{-\gamma'}
\int_{B_4(y)} |\nabla\phi^{\ext}+\mathbf{e}|^2
       \right).
\end{multline}
We should emphasise that obtaining the above estimate via \eqref{proof of proposition 1.12 equation 7} and triangular inequality involves the use of the fact that $(\,(\,\cdot\,)^{\ext}\,)^{\ext,y}$ is dominated by $(\,\cdot\,)^{\ext}$.

Putting $R:=|y|$, from \eqref{proof of proposition 1.12 equation 1} we get 
\begin{equation}
\label{proof of proposition 1.12 equation 10}
\int_{B_{3}(y)} |\nabla \phi^{\ext}+\mathbf{e}|^2\lesssim \left( \frac{3}{R+3}\right)^{\epsilon d}\int_{B_{R+3}(y)} |\nabla \phi^{\ext}+\mathbf{e}|^2\,.
\end{equation}


Now, suppose $r_*\le 2R+3$. Then 
\eqref{proof of proposition 1.12 equation 10} implies
\begin{equation}
\label{proof of proposition 1.12 equation 12}
\int_{B_{3}(y)} |\nabla \phi^{\ext}+\mathbf{e}|^2\lesssim \left( \frac{1}{R+r_*+1}\right)^{\epsilon d}\int_{B_{2R+3}} |\nabla \phi^{\ext}+\mathbf{e}|^2\,.
\end{equation}

If, on the other hand, $r_*>2R+3$, then hole-filling gives us a factor
\begin{equation}
\label{proof of proposition 1.12 equation 13}
\left( \frac{1}{R+3}\right)^{\epsilon d}
\left(
\frac{R+3}{r_*}
\right)^{\epsilon d}
=
\left(
\frac{1}{r_*}
\right)^{\epsilon d}
\lesssim
\left(
\frac{1}{R+r_*+1}
\right)^{\epsilon d}\,,
\end{equation}
thus getting once again \eqref{proof of proposition 1.12 equation 12} with the integral in the RHS taken over $B_{r_*}$.
(Recall that $r_*\ge 1$.)

All in all, by \eqref{proof of proposition 1.12 equation 2} and 
\eqref{proof of proposition 1.12 equation 12} we arrive at
\begin{multline}
\label{proof of proposition 1.12 equation 14}
\int_{B_{3}(y)} |\nabla \phi^{\ext}+\mathbf{e}|^2\lesssim \left( \frac{1}{R+r_*+1}\right)^{\epsilon d}(R+r_*+1)^d
\\
=
(R+r_*+1)^{d(1-\epsilon)}\le (R+r+1)^{d(1-\epsilon)}\,,
\end{multline}
because we assumed $r>r_*$. Recalling \eqref{temp9} we obtain the claim \eqref{proposition 1.12 equation 2} as follows. If $R<r$, then since $|y|=R$ we have $\left(\frac{|y|}{r} +1\right)^{-\gamma'}<1$ hence we conclude. If, on the other hand, $R>r$, then we estimate $\left(\frac{|y|}{r} +1\right)^{-\gamma'}<\left(\frac{R}{r}
\right)^{-\gamma'}$ and by choosing $\gamma'=d(1-\epsilon)$ we get
\[
\left(\frac{R}{r}
\right)^{-\gamma'}(R+r+1)^{d(1-\epsilon)}\lesssim r^{d(1-\epsilon)}\,,
\]
and we also conclude.

Lastly, we examine the situation when $r<r_*$. Following \cite{bella_kniely}, consider the rescaled functional $\tilde F\psi:
=\left(\frac{r}{r_*}\right)^\frac{d}2 F\psi$.
By \eqref{proof of proposition 1.12 equation 8} we have
\begin{equation}
|\tilde F\psi|^2=\left(\frac{r}{r_*}\right)^d |F\psi|^2
\lesssim
\left(\frac{r}{r_*}\right)^d
\fint_{B_r} |\psi|^2
\lesssim 
\fint_{B_{r_*}} |\psi|^2\,.
\end{equation}
Hence, by applying the argument from the previous steps to $\tilde F$ with $r$ replaced by $r_*$ one obtains the claim.
\end{proof}

\section*{Acknowledgements}
\addcontentsline{toc}{section}{Acknowledgements}
Matteo Capoferri and Mikhail Cherdantsev are grateful for the hospitality to TU Dortmund and the University of Split, where part of this work has been done.

\

\emph{Funding:} Peter Bella was supported by German Science Foundation DFG under the Grant Agreement Nr. 441469601. Matteo Capoferri was supported by EPSRC Fellowship EP/X01021X/1. Mikhail Cherdantsev was supported by the Leverhulme Trust Research Project Grant RPG-2019-240. Igor Vel\v{c}i\'c was supported by the Croatian Science Foundation under the project number~HRZZ-IP-2022-10-5181. 

\begin{appendices}

\section{Hole filling lemma in perforated domains}
\label{Appendix hole filling}

    The purpose of this appendix is to provide a proof of a version of the classical hole filling lemma in the setting of perforated domains.
    
    For this, we shall need the following simple lemma.

\begin{lemma}
\label{lemma about contradiction argument}
For $R\ge1$ let $\eta_R$ be a smooth cut-off of $B_{R/2}$ in $B_R$. Put 
\begin{equation}
\label{lemma about contradiction argument equation 1}
F_{\omega,R}(u):=\frac{\int_{B_R}\eta_R^2 \chi_\M u}{\int_{B_R}\eta_R^2 \chi_\M}\,, \qquad u\in L^2(B_R)\,.
\end{equation}
Then we have
\begin{equation}
\label{lemma about contradiction argument equation 2}
\int_{B_R} \chi_\M |u-F_{\omega,R}(u)|^2\le C \inf_c\int_{B_R} |u^\ext-c|^2\,,
\end{equation}
    where $C$ is a positive constant independent of $\omega$ and $R$.
\end{lemma}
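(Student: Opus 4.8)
The plan is to prove the bound directly: it reduces to one elementary two-line estimate together with a single uniform geometric fact. First I would observe that $F_\omega(u)$ depends only on $u|_{\M}$, and that $F_\omega$ is affine and reproduces constants, i.e.\ $F_\omega(u-c)=F_\omega(u)-c$ for every $c\in\R$. Fixing an arbitrary $c$, the triangle inequality gives
\[
\int_{B_1}\chi_\M|u-F_\omega(u)|^2\le 2\int_{B_1}\chi_\M|u-c|^2+2\,|B_1\cap\M|\,|F_\omega(u)-c|^2 .
\]
Since $F_\omega(u)-c=F_\omega(u-c)$ is the average of $u-c$ against the probability measure $\eta^2\chi_\M\,\dr x/\!\int_{B_1}\eta^2\chi_\M$, Jensen's (Cauchy--Schwarz) inequality together with $\eta\le 1$ yields
\[
|F_\omega(u)-c|^2\le\frac{\int_{B_1}\eta^2\chi_\M|u-c|^2}{\int_{B_1}\eta^2\chi_\M}\le\frac{\int_{B_1}\chi_\M|u-c|^2}{\int_{B_1}\eta^2\chi_\M},
\]
so that $\int_{B_1}\chi_\M|u-F_\omega(u)|^2\le 2\bigl(1+|B_1\cap\M|/\!\int_{B_1}\eta^2\chi_\M\bigr)\int_{B_1}\chi_\M|u-c|^2$. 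Next I would pass to the extension: for every $c$ one has $\chi_\M|u-c|=\chi_\M|u^\ext-c|\le|u^\ext-c|$ pointwise, hence $\inf_c\int_{B_1}\chi_\M|u-c|^2\le\inf_c\int_{B_1}|u^\ext-c|^2$. Taking the infimum over $c$ yields \eqref{lemma about contradiction argument equation 2} with
\[
C=2\Bigl(1+\tfrac{|B_1\cap\M|}{\int_{B_1}\eta^2\chi_\M}\Bigr)\le 2\Bigl(1+\tfrac{|B_1|}{|B_{1/2}\cap\M|}\Bigr),
\]
the last step using $\eta\equiv1$ on $B_{1/2}$ and $\operatorname{supp}\eta\subset B_1$.

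The only genuine obstacle — the single point where $\omega$-independence must be checked — is the uniform lower bound $c_*:=\inf_{\omega\in\Omega}|B_{1/2}\cap\M_\omega|>0$; with it, $C\le 2(1+|B_1|/c_*)$ is universal and the proof is complete. I would deduce $c_*>0$ from Assumption~\ref{assumption 1}: an inclusion meeting $B_{1/2}$ has diameter $<1/2$ by (ii), and by the separation condition (iii) together with the uniform minimal smoothness (iv) it carries a "collar" contained in $\M$ whose volume is comparable to its own, these collars being pairwise disjoint by (iii); a routine packing estimate then bounds the volume fraction occupied by the inclusions inside a unit-scale ball away from $1$, uniformly in $\omega$, with a constant depending only on $\varrho$ and the minimal-smoothness constants. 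This is precisely the density property already invoked tacitly elsewhere in the paper (e.g.\ the step $\int\eta^2\chi_\M\gtrsim R^d$ in the proof of Proposition~\ref{proposition 2}), so I would either isolate it as a short preliminary lemma or simply cite it. I also note that the whole argument can be recast as the contradiction/compactness statement that the name of the lemma alludes to: were the conclusion to fail, there would be sequences $\omega_n$, $u_n$ with — after subtracting a constant and normalising — $F_{\omega_n}(u_n)=0$, $\int_{B_1}\chi_{\M_n}u_n^2=1$ and $\inf_c\int_{B_1}|u_n^\ext-c|^2\to0$, and the same chain of inequalities, again using $c_*>0$, forces $\int_{B_1}\chi_{\M_n}u_n^2\to0$, a contradiction.
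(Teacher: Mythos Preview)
Your proof is correct and in fact more transparent than the paper's. The paper argues by contradiction (hence the name of the lemma): assuming sequences $u_n$, $\omega_n$ violating the bound with constant $n$, it normalises so that $\int_{B_1}\chi_{\M_n}|u_n-F_{\omega_n}(u_n)|^2=1$ and $\fint_{B_1}u_n^{\ext}=0$, deduces $u_n^{\ext}\to 0$ in $L^2(B_1)$, and then---using exactly the uniform lower bound on $\int_{B_1}\eta^2\chi_{\M_n}$ that you isolate---concludes $F_{\omega_n}(u_n)\to 0$, contradicting the normalisation. Your direct route via the triangle inequality plus Jensen is shorter, avoids the compactness detour, and yields the explicit constant $C=2(1+|B_1|/c_*)$ with $c_*=\inf_\omega |B_{1/2}\cap\M_\omega|$; the soft contradiction argument gives no such quantitative information. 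Both proofs rest on the same single geometric input $c_*>0$, which the paper simply asserts (``$\int_{B_1}\eta^2\chi_{\M_n}$ is uniformly bounded from below'') and which you correctly trace back to Assumption~\ref{assumption 1} and recognise as the density property $\int\eta^2\chi_\M\gtrsim R^d$ used elsewhere. Your final paragraph even sketches the paper's actual argument as an alternative, so you have effectively recovered both approaches.
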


\begin{proof}
Let us begin by observing that the functional $F_{\omega,R}$ is linear and that constants are fixed points for $F_{\omega,R}$, namely
\begin{equation}
\label{proof lemma about contradiction argument equation 1}
F_{\omega,R}(c)=c \qquad \forall c \in \mathbb{R}.
\end{equation}
Arguing by contradiction and rescaling $B_R$ to $B_1$, suppose that for every $n\in \mathbb{N}$ there exists $u_n\in L^2(B_1)$, $u_n\neq0$, and $\omega_n\in R^{-1}\Omega$ such that
\begin{equation}
\label{proof lemma about contradiction argument equation 2}
\int_{B_1} \chi_{\M_n} |u_n-F_{\omega_n,1}(u_n)|^2> n \inf_c\int_{B_1} |u_n^\ext-c|^2\,.
\end{equation}
Without loss of generality, we can assume that 
\begin{equation}
\label{proof lemma about contradiction argument equation 3}
\int_{B_1} \chi_{\M_n} |u_n-F_{\omega_n,1}(u_n)|^2=1
\end{equation}
and that $\fint_{B_1} u_n^\ext=0$. The former can be achieved by rescaling $u_n$, the latter by subtracting a constant from $u_n$ and using \eqref{proof lemma about contradiction argument equation 1}. Thus, we can rewrite \eqref{proof lemma about contradiction argument equation 2} as
\begin{equation}
\label{proof lemma about contradiction argument equation 4}
\int_{B_1} |u_n^\ext|^2< \frac{1}{n}, 
\end{equation}
which implies that
\begin{equation}
\label{proof lemma about contradiction argument equation 5}
u_n^\ext \to 0
\end{equation}
strongly in $L^2(B_1)$. 
But $\int_{B_1}\eta_1^2 \chi_{\M_n}$ is uniformly bounded from below and $\|\chi_{\M_n
}\|_{L^\infty}=1$, hence \eqref{proof lemma about contradiction argument equation 5} implies 
\begin{equation}
\label{proof lemma about contradiction argument equation 6}
F_{\omega_n,1}(u_n^\ext)=F_{\omega_n}(u_n)\to 0 \qquad\text{as}\quad n \to +\infty.
\end{equation}

Formulae \eqref{proof lemma about contradiction argument equation 5} and \eqref{proof lemma about contradiction argument equation 6} contradict \eqref{proof lemma about contradiction argument equation 3}.
\end{proof}

\begin{remark}
The same argument allows one to prove
\begin{equation}
\label{lemma about contradiction argument equation 2 bis}
\int_{B_R} \chi_\M |u-F_{\omega,R}(u)|^2\le C \inf_c\int_{B_R} \chi_\M |u-c|^2\,,
\end{equation}
instead of \eqref{lemma about contradiction argument equation 2} --- note the difference in the RHS.
\end{remark}

\begin{lemma}[Hole filling]
\label{lemma hole filling}
For $1<R\le \sqrt{T}$ we have
\begin{multline}
\label{proposition hole filling equation 1}
\int_{B_R} \chi_\M \left( \frac1T \phi_T^2 +|\nabla\phi_T+\mathbf{e}|^2+1\right)
\lesssim
\left( \frac{R}{\sqrt{T}} \right)^{\epsilon d}
\int_{B_{\sqrt{T}}} \chi_\M \left( \frac1T \phi_T^2 +|\nabla\phi_T+\mathbf{e}|^2+1\right),
\end{multline}
where
\begin{equation}
\label{proposition hole filling equation 2}
\epsilon:= \frac{1}{d} \frac{\ln \frac1\theta}{\ln 3}, \qquad \theta:=\frac{C_0}{1+C_0}\,,
\end{equation}
and $C_0$ is a positive constant depending only on $d$ and the ellipticity constant.
\end{lemma}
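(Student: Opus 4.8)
The plan is to follow the classical Widman hole-filling trick, adapted to the perforated setting by working with the extension operator of Theorem~\ref{theorem extension}. First I would fix a generic ball $B_\rho(x_0)\subset B_{\sqrt T}$ with $1<\rho$, set $u:=\phi_T+\mathbf{e}\cdot x$, and note that $u$ solves $\frac1T\chi_\M(\phi_T)-\nabla\cdot a\nabla u=0$ in the weak sense. One tests this equation with $\eta^2(u^\ext-c)$ where $\eta$ is a cut-off between $B_{\rho/3}(x_0)$ and $B_\rho(x_0)$ and $c$ is a free constant (to be chosen as an average of $u^\ext$ over the annulus $B_\rho\setminus B_{\rho/3}$). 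Using the algebraic identity for $a\nabla(\eta(u^\ext-c))\cdot\nabla(\eta(u^\ext-c))$ together with Young's inequality — exactly as in the Caccioppoli estimate \eqref{1.6} from the proof of Proposition~\ref{proposition 1} — one absorbs the gradient term and obtains, after discarding the nonnegative massive term on the left,
\begin{equation*}
\int_{B_{\rho/3}(x_0)}\chi_\M\left(|\nabla u|^2+\tfrac1T\phi_T^2+1\right)\lesssim \frac{1}{\rho^2}\int_{B_\rho(x_0)\setminus B_{\rho/3}(x_0)}|u^\ext-c|^2 +\int_{B_\rho(x_0)}\chi_\M\left(\tfrac1T\phi_T^2+1\right),
\end{equation*}
where the last term collects the contributions of the right-hand side $\frac1T\chi_\M\phi_T$ and of the inhomogeneous term $\mathbf{e}$; here one uses $R\le\sqrt T$ so that $\rho^2\le T$ keeps these lower-order terms under control.

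Next I would control the annulus term. Choosing $c=\fint_{B_\rho(x_0)\setminus B_{\rho/3}(x_0)}u^\ext$ and invoking the Poincaré inequality on the annulus (whose constant scales like $\rho^2$), the right-hand side becomes $\lesssim\int_{B_\rho(x_0)\setminus B_{\rho/3}(x_0)}|\nabla u^\ext|^2$. The extension estimate Theorem~\ref{theorem extension}(iii), applied inclusion by inclusion inside the annulus (the extension domains are non-overlapping, cf.~Remark~\ref{remark geometric assumptions}(b)), then bounds this by $\lesssim\int_{B_\rho(x_0)\setminus B_{\rho/3}(x_0)}\chi_\M|\nabla u|^2$ — modulo enlarging the annulus by a fixed width $1$, which is harmless since $\rho>1$. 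Adding $C_0\int_{B_{\rho/3}(x_0)}\chi_\M(|\nabla u|^2+\tfrac1T\phi_T^2+1)$ to both sides (the ''hole filling`` step) and denoting $I(r):=\int_{B_r(x_0)}\chi_\M(|\nabla\phi_T+\mathbf{e}|^2+\tfrac1T\phi_T^2+1)$, one arrives at the geometric decay inequality $I(\rho/3)\le\theta\,I(\rho)$ with $\theta=\frac{C_0}{1+C_0}<1$, where $C_0=C_0(d,a_\pm)$ is the implicit constant above.

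Finally, iterating $I(3^{-k}\sqrt T)\le\theta^k I(\sqrt T)$ and interpolating for a general $R\in(1,\sqrt T]$ — pick $k$ with $3^{-(k+1)}\sqrt T<R\le 3^{-k}\sqrt T$, so $\theta^k=(3^{-k})^{d\epsilon}\lesssim(R/\sqrt T)^{d\epsilon}$ with $\epsilon$ as in \eqref{proposition hole filling equation 2} — yields \eqref{proposition hole filling equation 1} after adjusting the constant. The main obstacle, and the only genuinely new point compared to the uniformly elliptic hole-filling argument, is the passage from $|\nabla u^\ext|^2$ on the annulus back to $\chi_\M|\nabla u|^2$: one must apply the extension bound \emph{locally} on each inclusion meeting the annulus, which is exactly what the uniform-in-$\omega$ minimal-smoothness hypothesis in Assumption~\ref{assumption 1} and the construction of non-overlapping extension domains are designed to permit; care is also needed to ensure the massive term $\frac1T\chi_\M\phi_T$ and the constant-$1$ term (from $\mathbf e$) do not spoil the clean geometric iteration, which is where the restriction $R\le\sqrt T$ is used.
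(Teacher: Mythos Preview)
Your overall strategy --- Caccioppoli with a cut-off, Poincar\'e on the annulus via the extension, then Widman's hole-filling and iteration --- is exactly the paper's. There is, however, a genuine slip in your intermediate inequality that breaks the hole-filling step as you state it.

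You write the Caccioppoli bound with the massive contribution on the \emph{full} ball:
\[
I(\rho/3)\;\le\; C_0\Bigl[\int_{B_\rho\setminus B_{\rho/3}}\chi_\M|\nabla u|^2\;+\;\int_{B_\rho}\chi_\M\bigl(\tfrac1T\phi_T^2+1\bigr)\Bigr].
\]
If you now add $C_0 I(\rho/3)$ to both sides, the full-ball massive term overlaps $B_{\rho/3}$ and contributes an additional $C_0\int_{B_{\rho/3}}\chi_\M(\tfrac1T\phi_T^2+1)\le C_0 I(\rho/3)$ on the right. A short computation gives only $I(\rho/3)\le C_0\,I(\rho)$, \emph{not} $I(\rho/3)\le\frac{C_0}{1+C_0}I(\rho)$; since $C_0\ge 1$ in general, no decay follows. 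The restriction $\rho^2\le T$ does not help here: the massive term $\tfrac1T\phi_T^2$ enters $I$ with weight one, so it is not ``lower order''.

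The fix --- and this is precisely what the paper does --- is to show that the massive cross term actually lands on the \emph{annulus}. Since your constant $c$ is an average over the annulus, Jensen's inequality gives $|c|^2\lesssim\fint_{\text{annulus}}\chi_\M\phi_T^2+\rho^2$, whence $\tfrac{\rho^d}{T}|c|^2\lesssim\int_{\text{annulus}}\tfrac1T\chi_\M\phi_T^2+\rho^d$ (using $\rho^2\le T$ only for the $\rho^d$ part). This puts every term on the right over $B_\rho\setminus B_{\rho/3}$, after which adding $C_0 I(\rho/3)$ to both sides genuinely yields $\theta=\frac{C_0}{1+C_0}$. With this correction your sketch is complete and coincides with the paper's argument.
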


\begin{remark}
Note that \eqref{proposition hole filling equation 1} immediately implies the same statement with $\phi_T$ replaced by $\phi_T^\ext$ in the LHS. We implicitly rely on this observation throughout the paper.
\end{remark}

\begin{proof}[Proof of Lemma~\ref{lemma hole filling}]
Let $\eta=\eta_R$ be a smooth cut-off of $B_{R+1}$ in $B_{2R}$ and put 
\[
c:=\frac{\int_{B_{2R}\setminus B_{R+1}}\eta^2\chi_\M\phi_T}{\int_{B_{2R}\setminus B_{R+1}}\eta^2\chi_\M}. 
\]
Testing \eqref{massive corrector} with $\eta^2 (\phi_T-c)$ one obtains
\begin{multline}
\label{proof caccioppoli equation 1}
0=\frac1T \int_{\R^d} \chi_\M \eta^2 \phi_T(\phi_T-c)+\int_{\R^d} a( \nabla\phi_T+\mathbf{e})\nabla(\eta^2(\phi_T-c))
\\
=
\frac1T \int_{\R^d} \chi_\M \eta^2 \phi_T^2 +\int_{\R^d} \eta^2 a( \nabla\phi_T+\mathbf{e})\cdot(\nabla\phi_T+\mathbf{e})
\\
-\frac{c}T \int_{\R^d} \chi_\M \eta^2 \phi_T
+\int_{\R^d}2\eta(\phi_T-c) a ( \nabla\phi_T+\mathbf{e}) \cdot \nabla \eta
-\int_{\R^d} \eta^2 a( \nabla\phi_T+\mathbf{e})\cdot \mathbf{e}.
\end{multline}
The latter implies, via Young's inequality,
\begin{multline}
\label{proof caccioppoli equation 2}
\int_{B_{2R}} \chi_\M \eta^2 \left( \frac1T \phi_T^2 +|\nabla\phi_T+\mathbf{e}|^2\right)
\\
\lesssim
\frac1T \int_{B_{2R}} \chi_\M \eta^2|c|^2 + R^d
+
\int_{B_{2R}\setminus B_{R+1}}\chi_\M |\nabla \eta|^2 |\phi_T-c|^2
\\
\lesssim
R^d\left(\frac{|c|^2}T+1\right)
+
\frac1{R^2}
\int_{B_{2R}\setminus B_{R+1}}\chi_\M |\phi_T-c|^2
\\
\lesssim
R^d\left(\frac{|c|^2}T+1\right)
+
\int_{B_{2R}\setminus B_{R+1}} |\nabla\phi_T^\ext|^2\,.
\end{multline}
In the last step above we used Lemma~\ref{lemma about contradiction argument} (adapted to the annulus) and Poincar\'e's inequality.

By means of Jensen's inequality and the properties of the extension, the estimate \eqref{proof caccioppoli equation 2} yields
\begin{multline}
\label{caccioppoli equation 2}
\int_{B_R} \chi_M \left( \frac1T \phi_T^2 +|\nabla\phi_T+\mathbf{e}|^2+1\right)
\\
\le C_0
\int_{B_{2R+1}\setminus B_R}\chi_M \left( \frac1T \phi_T^2 +|\nabla\phi_T+\mathbf{e}|^2+1\right)\,.
\end{multline}
Note that in the RHS of \eqref{caccioppoli equation 2} we slightly enlarged the domain of integration: this accounts for the extension domains of the inclusions intersecting the boundary of  $B_{2R}\setminus B_{R+1}$ when estimating $\nabla\phi_T^\ext$ by $\chi_\M\nabla\phi_T$.

Formula \eqref{caccioppoli equation 2} in turn implies
\begin{multline}
\label{hole filling equation 1}
\int_{B_R} \chi_\M \left( \frac1T \phi_T^2 +|\nabla\phi_T+\mathbf{e}|^2+1\right)
\\
\le \theta
\int_{B_{3R}} \chi_\M \left( \frac1T \phi_T^2 +|\nabla\phi_T+\mathbf{e}|^2+1\right)
\end{multline}
with $\theta=\frac{C_0}{1+C_0}$.

By iterating \eqref{hole filling equation 1} we arrive at \eqref{proposition hole filling equation 1}.
\end{proof}



\end{appendices}

\end{document}